\numberwithin{equation}{section}
\theoremstyle{plain}
\newtheorem{theorem}{Theorem}
\newtheorem{lemma}[theorem]{Lemma}
\theoremstyle{definition}
\newtheorem{definition}[theorem]{Definition}
\theoremstyle{definition}
\newtheorem{remark}[theorem]{Remark}
\newcommand{\nc}{\newcommand}
\nc{\R}{\mathbb{R}}
\nc{\C}{\mathbb{C}}
\nc{\mrm}{\mathrm}
\nc{\mL}{\mrm{L}}
\nc{\mF}{\mrm{F}}
\nc{\mC}{\mrm{C}}
\nc{\mH}{\mrm{H}}
\nc{\mW}{\mrm{W}}
\nc{\mV}{\mrm{V}}
\nc{\mM}{\mrm{M}}
\nc{\mK}{\mrm{K}}
\nc{\mD}{\mrm{D}}
\nc{\mB}{\mrm{B}}
\nc{\mR}{\mrm{R}}
\nc{\mX}{\mrm{X}}
\nc{\mY}{\mrm{Y}}
\nc{\mS}{\mrm{S}}
\nc{\Ec}{\mrm{E_c}}
\nc{\calL}{\mathcal{L}}
\nc{\loc}{\mrm{loc}}
\nc{\comp}{c}
\nc{\supp}{\mrm{supp}}
\nc{\Hardy}{\mathfrak{H}}
\nc{\calH}{\mathcal{H}}
\nc{\ctru}{\mathfrak{u}}
\nc{\ctrv}{\mathfrak{v}}
\nc{\bc}{\boldsymbol{c}}
\nc{\be}{\boldsymbol{e}}
\nc{\br}{\boldsymbol{r}}
\nc{\bs}{\boldsymbol{s}}
\nc{\bt}{\boldsymbol{t}}
\nc{\bw}{\boldsymbol{w}}
\nc{\bx}{\boldsymbol{x}}
\nc{\by}{\boldsymbol{y}}
\nc{\bz}{\boldsymbol{z}}
\nc{\lbr}{\lbrack}
\nc{\rbr}{\rbrack}
\nc{\dsp}{\displaystyle}
\nc{\vphi}{\varphi}
\newcommand{\lsim}{\raisebox{-0.13cm}{~\shortstack{$<$ \\[-0.07cm]
      $\sim$}}~}
\begin{document}
	\title[Galerkin approximations and canonical best-approximations]{On the relation between Galerkin approximations and canonical best-approximations of solutions to some non-linear Schrödinger equations}
	
	\author{Muhammad Hassan}\address{Institute of Mathematics, Ecole Polytechnique Fédérale de Lausanne (EPFL), 1015 Lausanne, Switzerland.}
	\author{Yvon Maday}\address{Sorbonne Université, CNRS, Université Paris Cité, Laboratoire Jacques-Louis Lions (LJLL), F-75005 Paris, France and Institut Universitaire de France,Paris, France.}
	\author{Yipeng Wang} \address{Sorbonne Université, CNRS, Université Paris Cité, Laboratoire Jacques-Louis Lions (LJLL), F-75005 Paris, France.}

	\begin{abstract} 
		In this paper, we establish a superconvergence property of Galerkin approximations to some non-linear Schrödinger equations of Gross-Pitaevskii type. More precisely, denoting by $u^*\in X \subseteq H^1(\Omega)$ the exact solution to such an equation, by $\{X_{\delta}\}_{\delta >0}$, a sequence of conforming subspaces of $X$ satisfying the approximation property, by $u_\delta^*\in X_{\delta}$ the Galerkin solution to the equation, and by $\Pi^X_{\delta} u^*$, the $(\cdot, \cdot)_{X}$-best approximation in $X_\delta$ of $u^*$, we show -- under some assumptions -- that $u_\delta^*$ converges at a higher rate to $\Pi^X_{\delta} u^*$ than to $u^*$ in both the $L^2$ norm and the canonical $H^1$ norm. Our results apply to conforming finite element discretisations as well as spectral Galerkin methods based on polynomials or Fourier (plane-wave) expansions.
	\end{abstract}
	\subjclass{65N15, 65N25, 65N30, 65N35}
	\keywords{Galerkin approximation, superconvergence, a priori error estimates, non-linear Schrödinger operators, Gross-Pitaevskii equation}
	\maketitle
	
\section{Background and Setting}\label{sec:1}

The present contribution is concerned with solutions to the following Gross-Pitaevskii type energy minimisation problems:
 \begin{align}
 \intertext{\bf{The Source Problem}}
\label{eq:1.1}E_{\rm src}^* :=& \underset{u \in X}{\text{min}} \left\{\mathcal{E}_{\rm src}(u):=\frac{1}{2}\int_{\Omega} \vert \nabla u(\bold{x})\vert^2\; d \bold{x} + \frac{1}{2}\int_{\Omega} V(\bold{x})\vert u(\bold{x})\vert^2\; d \bold{x} + \frac{1}{4}\int_{\Omega} \vert u(\bold{x})\vert^4\; d\bold{x}, - \big\langle f, u \big\rangle_{X^* \times X}\right\},\\[0.0em]
\intertext{\bf{The Eigenvalue Problem}}
E_{\rm eig}^* :=& \underset{\substack{u \in X\\ \Vert u \Vert_{L^2(\Omega)}=1}}{\text{min}} \left\{\mathcal{E}_{\rm eig}(u):=\frac{1}{2}\int_{\Omega} \vert \nabla u(\bold{x})\vert^2\; d \bold{x} + \frac{1}{2}\int_{\Omega} V(\bold{x})\vert u(\bold{x})\vert^2\; d \bold{x} + \frac{1}{4}\int_{\Omega} \vert u(\bold{x})\vert^4\; d\bold{x}  \right\}, \label{eq:1.2}
\end{align}
\vspace{2mm}

\noindent under one of the following two settings:\vspace{2mm}

\begin{description}
    \item[Setting One] $\Omega= (-1, 1)^d, ~ d \in \{1, 2, 3\}$ and  $X= H^1_0(\Omega)$. Moreover, the source function $f \in H^s(\Omega)$ for $s \geq 0$ while the effective potential $V$ is positive and $V \in H^{r_v}(\Omega)$ for $r_v > d/2$; \vspace{1mm}
    \item[Setting Two] $\Omega$ is a $d$-dimensional torus (isomorphic to the unit cell $\tilde\Omega$ of some periodic lattice $\mathbb{L}$ in $\mathbb{R}^d$), $ d \in \{1, 2, 3\}$ and $X= H^1(\Omega)$ (isomorphic to $H^1_{\rm per}(\tilde\Omega)$). Moreover, the source function $f \in H^s(\Omega)$ for $s \geq 0$ while the effective potential $V$ is bounded below by $V_{\rm min}>0$ and $V \in H^{r_v}(\Omega)$ for $r_v > d/2$.
\end{description}
\vspace{1mm}

Under these hypotheses, it is well-known that there exists a unique minimiser $u^*_{\rm src}$ to the unconstrained minimisation problem \eqref{eq:1.1} and there exists a unique positive minimiser $u^*_{\rm eig}$ to the constrained minimisation problem \eqref{eq:1.2}. Additionally, elliptic regularity theory together with a bootstrapping argument implies that
\begin{align}\label{eq:elliptic_regularity}
    u^*_{\rm src} \in H^{t_{\rm src}+2}(\Omega) \qquad \text{and} \qquad u^*_{\rm eig} \in H^{t_{\rm eig}+2}(\Omega),
\end{align}
with $t_{\rm src}= \min \{s, r_v, 3-\epsilon\}$ and $t_{\rm eig}=\min \{r_v, 3-\epsilon\}$ for any $\epsilon >0$ in the case of \textbf{Setting One} and $t_{\rm src}= \min \{s, r_v\}$ and $t_{\rm eig}=r_v$ in the case of \textbf{Setting Two}. We refer to, e.g., \cite[Chapter 3]{mclean2000strongly} for precise definitions of these higher-order Sobolev spaces.


In order to approximate these minimisers numerically, one typically introduces a sequence of finite-dimensional approximation spaces $\{X_\delta\}_{\delta > 0}~$,  $X_\delta \subset X$,  such that the standard approximation property is satisfied:
\begin{align}\label{eq:approximation_property}
 \forall u \in X\colon \qquad  \lim_{\delta \to 0^+} \; \underset{v_\delta \in X_\delta}{\inf} \Vert u - v_\delta\Vert_X =0. 
\end{align}
We then seek, for a given approximation space $X_\delta$, the solutions to the following discrete energy minimisation problems:
\begin{align}\label{eq:1.5}
E_{\rm src}^\delta :=& \underset{u_\delta \in X_\delta}{\text{min}} \left\{\mathcal{E}_{\rm src}(u_\delta):=\frac{1}{2}\int_{\Omega} \vert \nabla u_\delta(\bold{x})\vert^2\; d \bold{x} + \frac{1}{2}\int_{\Omega} V(\bold{x})\vert u_\delta(\bold{x})\vert^2\; d \bold{x} + \frac{1}{4}\int_{\Omega} \vert u_\delta(\bold{x})\vert^4\; d\bold{x} - \big\langle f, u_\delta \big\rangle_{X^* \times X}\right\},\\
E_{\rm eig}^\delta :=& \underset{\substack{u_\delta \in X_\delta\\ \Vert u_\delta \Vert_{L^2(\Omega)}=1}}{\text{min}} \left\{\mathcal{E}_{\rm eig}(u_\delta):=\frac{1}{2}\int_{\Omega} \vert \nabla u_\delta(\bold{x})\vert^2\; d \bold{x} + \frac{1}{2}\int_{\Omega} V(\bold{x})\vert u_\delta(\bold{x})\vert^2\; d \bold{x} + \frac{1}{4}\int_{\Omega} \vert u_\delta(\bold{x})\vert^4\; d\bold{x} \right\}. \label{eq:1.6}
\end{align}

The exact choice of the approximation spaces $\{X_{\delta}\}_{\delta >0}$ depends on the problem setting. Finite element or spectral polynomial discretisations may be chosen for \textbf{Setting One} in which case $\delta$ represents the maximal finite element diameter or the reciprocal of the maximal polynomial degree. In the case of \textbf{Setting Two}, a natural choice is a spectral Fourier discretisation in which case $\delta$ represents the reciprocal of the maximal wave-number (classically denoted by $N$). In either case, it is well known that for any given $\delta>0$, there exists a unique solution $u_{\delta, \rm src}^{*} \in X_{\delta}$ to the unconstrained discrete minimisation problem \eqref{eq:1.5}, and for all $\delta$ small enough, we have the error estimate
\begin{align}\label{eq:rates_1}
\Vert u^*_{\rm src} - u_{\delta, \rm src}^{*} \Vert_{L^2(\Omega)}\lsim \delta \Vert u^*_{\rm src} - u_{\delta, \rm src}^{*} \Vert_{X} \lsim \delta \underset{v_{\delta}\in X_{\delta}}{\inf}\Vert u^*_{\rm src} - v_{\delta}\Vert_{X} \lsim \delta^2 \Vert u^*_{\rm src}\Vert_{H^2(\Omega)}.
\end{align}
This estimate can be improved if the exact minimiser $ u^*_{\rm src}$ possesses additional Sobolev regularity (in the sense of Estimate \eqref{eq:elliptic_regularity}) and if the approximation spaces $\{X_{\delta}\}_{\delta >0}$ satisfy certain properties.

Under \textbf{Settings One} and \textbf{Two} and with the same choice of discretisation spaces, it has been proven\footnote{The spectral polynomial discretisation is not analysed in the cited reference but the arguments are similar.} in \cite{cances2010numerical} that there exists at least one solution $u_{\delta, \rm eig}^{*} \in X_{\delta}$ to the constrained discrete minimisation problem  \eqref{eq:1.6}. Additionally, for all $\delta$ small enough and any discrete minimiser $u_{\delta, \rm eig}^{*}$ that satisfies $\big(u_{\delta, \rm eig}^{*}, u_{\rm eig}^*\big)_{L^2(\Omega)} \geq 0$, we again have the error estimate
\begin{align}\label{eq:rates_2}
\Vert u^*_{\rm eig} - u_{\delta, \rm eig}^{*} \Vert_{L^2(\Omega)}+      \delta \Vert u^*_{\rm eig} - u_{\delta, \rm eig}^{*} \Vert_{X} \lsim \delta \underset{v_{\delta}\in X_{\delta}}{\inf}\Vert u^*_{\rm eig} - v_{\delta}\Vert_{X} \lsim \delta^2 \Vert u^*_{\rm eig}\Vert_{H^2(\Omega)}.
\end{align}
As before, this estimate can be improved if the exact constrained minimiser $ u^*_{\rm eig}$ possesses additional Sobolev regularity (in the sense of Estimate \eqref{eq:elliptic_regularity}) and if the approximation spaces $\{X_{\delta}\}_{\delta >0}$ satisfy certain properties. Note that in Inequalities \eqref{eq:rates_1} and \eqref{eq:rates_2}, and in the sequel, the notation $\lsim$ should be understood in the following sense: Let $\Lambda \subset \mathbb{R}$ denote the set of all possible discretisation parameters $\delta>0$, and let $g, h \colon \Lambda \rightarrow \mathbb{R}$. Then 
\begin{align*}
g \lsim h \iff \exists C>0,~ \forall \delta \in \Lambda \colon \quad g(\delta) \leq C h(\delta).
\end{align*}

The aim of the present contribution is to show that in contrast to the standard error estimates~\eqref{eq:rates_1} and \eqref{eq:rates_2}, improved convergence rates can be obtained for the error between the discrete minimisers $u_{\delta, \rm src}^{*}$ and~$u_{\delta, \rm eig}^{*}$ and the $(\cdot, \cdot)_X$-best approximation in $X_{\delta}$ of the exact minimisers $u^{*}_{\rm src}$ and~$u^{*}_{\rm eig}$. Roughly speaking, denoting by~$\Pi^{X}_\delta$ the $(\cdot, \cdot)_X$-orthogonal projection operator onto $X_{\delta}$, we will show that, under appropriate hypothesis,  for each $m \in \{0,1\}$, each ${\rm pb}\in \{{\rm src}, {\rm eig}\}$, and all ~$\delta$ small enough it holds that
\begin{align}\label{eq:rates_new}
 \Vert  u_{\delta, \rm pb}^{*} -\Pi^{X}_\delta u^*_{\rm pb}\Vert_{H^m(\Omega)} \leq \delta^{q(m)} \Vert u^*_{\rm pb} - \Pi^{X}_\delta u^*_{\rm pb} \Vert_{X},
\end{align}
where the exact rate $q(m)$ depends on the choice of discretisation (finite element or spectral) but is always larger than or equal to one and can even reach $q(1)=2$.

\vspace{1mm}
We remark here that the most comprehensive collection of results of this nature that we are familiar with is the article by Christopher Beattie \cite{beattie2000galerkin}, which further references prior work in \cite{chatelin2011spectral, babuvska1989finite, knyazev1997new}. However, the results contained in \cite{beattie2000galerkin} differ from ours in some important aspects. First, the analysis in \cite{beattie2000galerkin} concerns \emph{linear} eigenvalue problems-- albeit eigenvalue problems far more general than those of Schrödinger-type. Second, rather than studying the relationship between the Galerkin solution and the $(\cdot, \cdot)_X$-best approximation of the exact solution as we do in the present article, the analysis in \cite{beattie2000galerkin} studies the relationship between Galerkin eigenvectors and the projection of the exact solution onto the approximation space $X_{\delta}$ with respect to (i) the bilinear form induced induced by the underlying elliptic operator; (ii) the $L^2(\Omega)$ inner product. Finally, our results -- at least in the Fourier case (\textbf{Setting Two} for the eigenvalue problem) -- yield an improved convergence rate $q(0) =3$ in Estimate \eqref{eq:rates_new}, compared with only $2$ in \cite{beattie2000galerkin}.
\vspace{1mm}

Let us also briefly comment that while these results appear to be interesting, per se, independently of applications (for instance, as remarked in \cite{beattie2000galerkin}, the results for the eigenvalue problem belong to the `Galerkin folklore'), they are motivated in the present case by our proposal, based on {\sl a posteriori estimators}, of near-optimal strategies for calculating the numerical solution of a PDE to a precision fixed in advance. Non-linear periodic eigenvalues problems such as Equation \eqref{eq:1.2} with the plane-wave discretisation are frequently encountered in quantum chemistry and electronic structure calculations, and iterative algorithms to solve these problems -- known as Self-Consistant Field methods -- are therefore particularly relevant. In this context, following and in continuation of what is proposed in \cite{dusson2017posteriori}, we propose in \cite{hassan_2025} new error-balance strategies in order to achieve a given solution accuracy whilst minimising the computational cost. Our strategy in \cite{hassan_2025} involves first performing a large number of iterations in small-dimensional plane-wave bases (dealing with modes up to $n$ with $n \ll N$). Subsequently, the dimension of the plane-wave approximation spaces is increased in a controlled way to compute the higher modes of the eigenvector solution. The fact that the numerical solution displays superconvergence to the projection of the exact solution justifies this strategy. Indeed, performing a large number of iterations on small discrete problems (thus with small cost) provides a numerical solution whose low modes are very accurate. The additional iterations, in higher-dimensional spaces, are then concentrated purely on the high modes (i.e. on the orthogonal space to the first $n$ modes). This results in a much smaller \emph{effective} spectral radius of the underlying iteration matrix which leads to very few iterations being needed to attain the required accuracy. In \cite{hassan_2025}, strategies are proposed where only one iteration is required in the highest-dimensional space.


\vspace{1mm}
The remainder of this article is organised as follows. In the forthcoming Section \ref{sec:source}, we state and prove an abstract superconvergence result for general Galerkin discretisations of the \emph{source problem} \eqref{eq:1.1} that satisfy certain assumptions. Next, in Section \ref{sec:eig}, we state and prove an analogous abstract superconvergence result for general Galerkin discretisations of the \emph{eigenvalue problem} \eqref{eq:1.2} that satisfy certain assumptions. Subsequently, in Sections \ref{sec:4}, \ref{sec:5}, and \ref{sec:6}, we apply these abstract convergence results to conforming finite element discretisations, spectral polynomial, and spectral Fourier discretisations of the source and eigenvalue problems.

\section{An Abstract Superconvergence Result for the Source Problem}\label{sec:source}

Throughout this section, we assume that the computational domain $\Omega$, the function space $X$, the source function $f$, and the effective potential $V$ are chosen in accordance with either \textbf{Setting One} or \textbf{Setting Two}, and we denote by $u_{\rm src}^* \in X$ the unique minimiser of the continuous energy functional \eqref{eq:1.1} in $X$. Let us emphasise here that the space $X$ is equipped with its natural inner-product, i.e.,
\begin{equation}\label{eq:X_inner}
\begin{split}
 X=H_0^1(\Omega) \quad \implies \quad (u, v)_X&:= \int_{\Omega} \nabla u({\bf x})\cdot \nabla v({\bf x})\; d{\bf x} \qquad\hspace{2.38cm} \forall u, v \in X.\\ \nonumber
 X=H^1(\Omega) \quad \implies \quad (u, v)_X&:= \int_{\Omega} u({\bf x}) v({\bf x})\; d{\bf x}+\int_{\Omega} \nabla u({\bf x})\cdot \nabla v({\bf x})\; d{\bf x} \qquad \forall u, v \in X.
 \end{split}
\end{equation}

Our goal now is to state and prove some abstract convergence lemmas that are valid for any Galerkin approximation method that satisfies certain assumptions. We will then study, in the forthcoming Sections \ref{sec:4}-\ref{sec:6}, the implications of these abstract results for the concrete examples of the finite element, spectral polynomial and spectral Fourier methods.

Following the notation used earlier in Section \ref{sec:1}, let us denote by $\{X_{\delta}\}_{\delta >0}$ a sequence of conforming approximation spaces of $X$ that satisfy the approximation property \eqref{eq:approximation_property}. Next, for any $\delta >0 $, we denote by $u^*_{\delta, \rm src} \in X_\delta$ the unique minimiser of the discrete energy functional \eqref{eq:1.5} in $X_\delta$. As mentioned in the introduction, the existence of such a discrete minimiser (at least for~$\delta$ small enough) is guaranteed for the classical choices of finite element, spectral polynomial, and spectral Fourier approximation spaces $\{X_\delta\}_{\delta >0}$.\vspace{1mm}

\medskip
 We now assume that the following two conditions hold: \vspace{1mm}

\begin{description}
    \item[Assumption A.1 (Explicit Convergence Rates)]
    \end{description}
    We assume that there exists $r_{\rm src} \geq 2 $ such that for all $\delta>0$ sufficiently small it holds that
    \begin{align}\label{eq:assum_1a}
    \Vert u_{\rm src}^* - u^*_{\delta, \rm src} \Vert_{L^2(\Omega)} + \delta \Vert u_{\rm src}^* - u^*_{\delta, \rm src} \Vert_{X} \lsim\delta^{r_{\rm src}} \Vert u_{\rm src}^*\Vert_{H^{r_{\rm src}}(\Omega)}.
    \end{align}
    Additionally, we assume that the $(\cdot, \cdot)_X$-orthogonal projection operator $\Pi_\delta^X \colon X \rightarrow X_\delta$ satisfies for all $\delta>0$ sufficiently small and all $v \in H^2(\Omega)$ the estimate
    \begin{align}\label{eq:assum_1b}
    \Vert v -\Pi_\delta^X v \Vert_{L^2(\Omega)} \lsim \delta \Vert v -\Pi_\delta^X v \Vert_{X} \lsim \delta^{2} \Vert v\Vert_{H^{2}(\Omega)}.
    \end{align}

    \vspace{1mm}

\begin{description}
    \item[Assumption A.2 (Uniform Boundedness of Discrete Minimisers)]
    \end{description}
    We assume that the sequence of discrete constrained minimisers $\{u^{*}_{\delta, \rm src}\}_{\delta >0}$ is uniformly bounded in $L^{\infty}(\Omega)$, i.e., there exists a $C>0$ such that for all $\delta >0$ it holds that
    \begin{align}\label{eq:assum_3}
\Vert u^{*}_{\delta, \rm src} \Vert_{L^{\infty}(\Omega)} \leq C.
    \end{align}

    \vspace{1mm}
    
\begin{remark}[A Sufficient Condition to Guarantee \textbf{Assumption A.2}]\label{rem:inverse}~

\noindent    It is pertinent to remark here that for approximation spaces $\{X_{\delta}\}_{\delta >0}$ that consist of more regular functions, for instance, if $X_{\delta} \subset H^2(\Omega)$ (which is \emph{not} the case for $\mathcal{P}_1$ finite elements), \textbf{Assumption A.2} above can be replaced with the following two hypotheses:
    \begin{itemize}
        \item First, we assume the existence of a suitable optimal-rate projection operator. More precisely, we assume that there exists $ s^*_{\rm src}\geq2$ such that for every $\delta >0$ sufficiently small there exists an operator $\Pi_\delta^{2} \colon H^2(\Omega)\cap X \rightarrow X_\delta$ such that for all real numbers $0\leq t \leq 2$ and any $v \in H^{s^*_{\rm src}}(\Omega) \cap X$ it holds that
        \begin{align}\label{eq:assum_3primeprime}
            \Vert v - \Pi_\delta^{2} v \Vert_{H^{t}(\Omega)} \lsim \delta^{s^*_{\rm src}-t} \Vert v \Vert_{H^{s^*_{\rm src}}(\Omega)}.
        \end{align}

        \item Second, we assume the existence of a suitable inverse inequality, i.e., we assume that there exists a mapping $\theta \colon [1, +\infty) \rightarrow \mathbb{R}$ such that 
    \begin{equation}\label{eq:assum_3prime}
    \begin{split}
        \forall p \geq 1, ~ \forall v_\delta \in X_\delta \cap H^{p}(\Omega)\colon& \qquad \Vert v_\delta \Vert_{H^{p}(\Omega)} \lsim \delta^{-\theta(p)} \Vert  v_\delta \Vert_{H^1(\Omega)} \qquad \text{and}\\
\exists p^* \in (d/2, 2)\colon& \qquad \theta(p^*)+1- \min\{s^*_{\rm src}, r_{\rm src}\}< 0.
    \end{split}
    \end{equation}
    \end{itemize}

\noindent Indeed, we observe that we can write
\begin{align*}
    \Vert u^*_{\rm src}- u^*_{\delta, \rm src} \Vert_{H^{p^*}(\Omega)} &\leq \Vert u^*_{\rm src}-  \Pi_\delta^{2} u^*_{\rm src} \Vert_{H^{p^*}(\Omega)} + \Vert  \Pi_\delta^{2} u^*_{\rm src}- u^*_{\delta, \rm src} \Vert_{H^{p^*}(\Omega)}\\
    &\lsim \delta^{s_{\rm src}^*-p^*} \Vert u^*_{\rm src}\Vert_{H^{s_{\rm src}^*}(\Omega)}  + \Vert  \Pi_\delta^{2} u^*_{\rm src}- u^*_{\delta, \rm src} \Vert_{H^{p^*}(\Omega)}\\
    &\lsim \delta^{s_{\rm src}^*-p^*} \Vert u^*_{\rm src}\Vert_{H^{s_{\rm src}^*}(\Omega)}  + \delta^{-\theta(p^*)} \Vert  \Pi_\delta^{2} u^*_{\rm src}- u^*_{\delta, \rm src} \Vert_{H^{1}(\Omega)}\\
    &\leq \delta^{s_{\rm src}^*-p^*} \Vert u^*_{\rm src}\Vert_{H^{s_{\rm src}^*}(\Omega)}  + \delta^{-\theta(p^*)} \Vert  \Pi_\delta^{2} u^*_{\rm src}- u^*_{\rm src} \Vert_{H^{1}(\Omega)}+ \delta^{-\theta(p^*)} \Vert u^*_{\rm src}- u^*_{\delta, \rm src} \Vert_{H^{1}(\Omega)}\\
    &\lsim \delta^{s_{\rm src}^*-p^*} \Vert u^*_{\rm src}\Vert_{H^{s_{\rm src}^*}(\Omega)}  + \delta^{s_{\rm src}^*-1-\theta(p^*)} \Vert  u^*_{\rm src}\Vert_{H^{s_{\rm src}^*}(\Omega)}+ \delta^{r_{\rm src}^*-\theta(p^*)} \Vert u^*_{\rm src}\Vert_{H^{s_{\rm src}^*}(\Omega)}.
\end{align*}
In the above calculation, the second step follows from Estimate \eqref{eq:assum_3primeprime}, the third step follows from Estimate~\eqref{eq:assum_3prime} and the last step follows from both Estimate \eqref{eq:assum_3primeprime} and \eqref{eq:assum_3prime}. 

Using now the fact that $d/2 < p^* < 2 \leq s_{\rm src}^*$ and $\theta(p^*)+1- \min\{s^*_{\rm src}, r_{\rm src}\}< 0$ by assumption and recalling the Sobolev embedding $H^{t}(\Omega)  \hookrightarrow L^{\infty}(\Omega)$ for all $t > d/2$, we deduce that $u^*_{\delta, \rm src}  \to u^*_{\rm src}$ in the $L^{\infty}(\Omega)$-norm. The uniform boundedness in $L^{\infty}(\Omega)$ of the sequence $\{u^*_{\delta, \rm src} \}_{\delta >0}$ now follows. 
\end{remark}

\vspace{1mm}
As the next step, we introduce two sequences of auxiliary potential functions that will play a key role in our analysis.
\begin{definition}\label{def:v_aux}
    Assume the framework adopted in the current section and recall that $V \in H^{r_v}(\Omega), ~ r_v > d/2$ denotes the effective potential, $u_{\rm src}^* \in X$ denotes the exact minimiser of the energy functional \eqref{eq:1.1}, and for each $\delta >0$, $u^*_{\delta, \rm src} \in X_\delta$ denotes the discrete minimiser of the discrete energy functional \eqref{eq:1.5}. We define the auxiliary potential functions ${V}_{\delta} \colon \Omega \rightarrow \mathbb{R}$ and $\widetilde{V}_{\delta} \colon \Omega \rightarrow \mathbb{R}$ as
    \begin{equation}\label{eq:v_aux}
        {V}_{\delta} := V + \left(u_{\rm src}^* \right)^2+ \left(u^*_{\delta, \rm src} \right)^2 + u^*_{\rm src}u^*_{\delta, \rm src}, \qquad \widetilde{V}_{\delta} := V_{\delta} - 1.
    \end{equation}
\end{definition}

As a direct consequence of \textbf{Assumptions A.1} and \textbf{A.2}, we can deduce the following regularity result on the auxiliary potentials $\{{V}_{\delta}\}_{\delta >0}$ and $\{\widetilde{V}_{\delta}\}_{\delta >0}$. 
\begin{lemma}\label{lem:combine_1}
    Assume the framework adopted in the current section, let \textbf{Assumptions A.1} and~\textbf{A.2} hold, and for any $\delta>0$ let the auxiliary potential functions ${V}_{\delta} \colon \Omega \rightarrow \mathbb{R}$ and $\widetilde{V}_{\delta} \colon \Omega \rightarrow \mathbb{R}$ be defined as in Definition \ref{def:v_aux}. Then 
\begin{align*} 
    \{{V}_{\delta}\}_{\delta >0}, ~ \{\widetilde{V}_{\delta}\}_{\delta >0} ~\text{ are uniformly bounded in } L^{\infty}(\Omega).
\end{align*}
\end{lemma}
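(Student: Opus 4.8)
The plan is to bound $\|V_\delta\|_{L^\infty(\Omega)}$ uniformly in $\delta$ by a straightforward termwise estimate, and then to observe that the claim for $\widetilde V_\delta$ follows trivially. Starting from the pointwise triangle inequality
\[
 |V_\delta(\mathbf{x})| \le |V(\mathbf{x})| + |u^*_{\rm src}(\mathbf{x})|^2 + |u^*_{\delta, \rm src}(\mathbf{x})|^2 + |u^*_{\rm src}(\mathbf{x})|\,|u^*_{\delta, \rm src}(\mathbf{x})| \quad \text{for a.e. } \mathbf{x} \in \Omega,
\]
one gets
\[
 \|V_\delta\|_{L^\infty(\Omega)} \le \|V\|_{L^\infty(\Omega)} + \|u^*_{\rm src}\|_{L^\infty(\Omega)}^2 + \|u^*_{\delta, \rm src}\|_{L^\infty(\Omega)}^2 + \|u^*_{\rm src}\|_{L^\infty(\Omega)}\,\|u^*_{\delta, \rm src}\|_{L^\infty(\Omega)},
\]
so it suffices to bound each of the four terms on the right-hand side by a constant independent of $\delta$.

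First I would handle the two $\delta$-independent contributions. Since $V \in H^{r_v}(\Omega)$ with $r_v > d/2$, the Sobolev embedding $H^{r_v}(\Omega) \hookrightarrow L^\infty(\Omega)$ gives $\|V\|_{L^\infty(\Omega)} < \infty$. Likewise, by the elliptic regularity estimate \eqref{eq:elliptic_regularity} we have $u^*_{\rm src} \in H^{t_{\rm src}+2}(\Omega)$, and since $t_{\rm src} \ge 0$ and hence $t_{\rm src} + 2 \ge 2 > d/2$ for every $d \in \{1,2,3\}$, the same embedding yields $\|u^*_{\rm src}\|_{L^\infty(\Omega)} < \infty$. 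Next, the terms involving $u^*_{\delta, \rm src}$ are controlled directly by \textbf{Assumption A.2}: there is a constant $C > 0$ with $\|u^*_{\delta, \rm src}\|_{L^\infty(\Omega)} \le C$ for all $\delta > 0$. Substituting these bounds into the displayed inequality shows that $\sup_{\delta > 0} \|V_\delta\|_{L^\infty(\Omega)} < \infty$. Finally, from $\widetilde V_\delta = V_\delta - 1$ one gets $\|\widetilde V_\delta\|_{L^\infty(\Omega)} \le \|V_\delta\|_{L^\infty(\Omega)} + 1$, so the uniform bound transfers at once to $\{\widetilde V_\delta\}_{\delta>0}$.

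No step here is a genuine obstacle; the only point worth emphasising is that the uniformity of the bound in $\delta$ is precisely what \textbf{Assumption A.2} supplies, whereas $V$ and $u^*_{\rm src}$ are fixed functions whose $L^\infty$-membership is furnished by the Sobolev regularity already recorded above. (\textbf{Assumption A.1} enters only in fixing the framework and ensuring that the discrete minimisers $u^*_{\delta, \rm src}$ — and therefore the potentials $V_\delta$ and $\widetilde V_\delta$ — are well defined for $\delta$ sufficiently small.)
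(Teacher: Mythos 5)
Your proof is correct and follows exactly the same reasoning as the paper's: bound $V$ and $u^*_{\rm src}$ in $L^\infty(\Omega)$ via the Sobolev embedding $H^{s}(\Omega)\hookrightarrow L^\infty(\Omega)$ for $s>d/2$, bound $u^*_{\delta,\rm src}$ via \textbf{Assumption A.2}, and observe that the bound for $\widetilde V_\delta$ follows trivially from that for $V_\delta$. Your write-up is simply a more explicit, termwise version of the paper's two-line argument.
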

\begin{proof}
Since the exact minimiser $u^*_{\rm src} \in H^2(\Omega)$ (recall the inclusion \eqref{eq:elliptic_regularity}), the result follows from the Sobolev embedding $H^{s}(\Omega)  \hookrightarrow L^{\infty}(\Omega)$ for all $s > d/2$ and \textbf{Assumption A.2}. 
\end{proof}

Lemma \ref{lem:combine_1} together with \textbf{Assumptions A.1} and \textbf{A.2} is sufficient to prove a first abstract superconvergence result in the $\Vert \cdot \Vert_X$-norm. However, in order to obtain an analogous abstract superconvergence result in the $\Vert \cdot \Vert_{L^2(\Omega)}$-norm -- which also allows the further improvement of the $\Vert \cdot \Vert_X$ result -- we require an additional assumption. We shall demonstrate in the sequel that this assumption is satisfied by higher-order conforming finite element, spectral polynomial and spectral Fourier methods. Unfortunately, unlike \textbf{Assumptions A.1} and \textbf{A.2}, we are unable to prove that this third assumption is satisfied by the linear, conforming finite element method. \vspace{3mm}

\begin{description}
    \item[Assumption A.3 (A Convergence Result on the Auxiliary Potential)]
    \end{description}
  For any $\delta>0$, let the auxiliary potential function $\widetilde{V}_{\delta} \colon \Omega \rightarrow \mathbb{R}$ be defined as in Definition \ref{def:v_aux}, and let $\Pi^X_\delta \colon X \rightarrow X_\delta$ denote the $(\cdot, \cdot)_X$-orthogonal projection operator onto $X_\delta$. We assume that there exists a real number $q^*_{\rm src}\in (0, 2]$ such that for all $\delta$ sufficiently small, all $\varphi\in H^2(\Omega)\cap X$ and all $\chi \in X$ it holds that
    \begin{align}\label{eq:assum_4b}
    \int_\Omega \widetilde{V}_{\delta}({\bf x})  \varphi({\bf x}) [\chi- \Pi^X_\delta \chi]({\bf x})\; d{\bf{x}} &\lsim \delta^{q^*_{\rm src}+1}\Vert \varphi \Vert_{H^2(\Omega)} \Vert  \chi- \Pi^X_\delta \chi\Vert_{X}.
\end{align}


\medskip

Equipped with \textbf{Assumptions A.1, A.2} and possibly \textbf{A.3}, we can now prove the following abstract superconvergence result.

\medskip

\begin{lemma}\label{lem:combine_2}
    Assume the framework adopted in the current section, let \textbf{Assumptions A.1} and~\textbf{A.2} hold, and for every $\delta>0$ let $\Pi^X_\delta \colon X \rightarrow X_\delta$ denote the $(\cdot, \cdot)_X$-orthogonal projection operator onto $X_\delta$. Then we have the error estimate
    \begin{align}\label{eq:source_lower_rates}
      \Vert \Pi_\delta^X u^*_{\rm src}-u_{\delta, \rm src}^{*}\Vert_{X} &\lsim   \delta\Vert \Pi^X_{\delta} u^*_{\rm src}-u^*_{\rm src}\Vert_{X}.
    \end{align}
    If, in addition, \textbf{Assumption A.3} also holds, then we have the improved error estimate
    \begin{align}\label{eq:source_higher_rates}
      \Vert \Pi_\delta^X u^*_{\rm src}-u_{\delta, \rm src}^{*}\Vert_{L^2(\Omega)} + \delta^{\frac{q^*_{\rm src}}{2}}\Vert \Pi_\delta^X u^*_{\rm src}-u_{\delta, \rm src}^{*}\Vert_{X} &\lsim   \delta^{q^*_{\rm src}+1}\Vert \Pi_\delta^X u_{\rm src}-u^*_{\rm src}\Vert_{X},
    \end{align}
    where $q^*_{\rm src}>0$ is the constant introduced in \textbf{Assumption A.3}.
\end{lemma}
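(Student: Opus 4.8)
The plan is to exploit the variational (Galerkin orthogonality) characterisation of both $u^*_{\rm src}$ and $u^*_{\delta,\rm src}$, and then to test the resulting error equation against the superconvergent quantity $e_\delta := \Pi^X_\delta u^*_{\rm src} - u^*_{\delta,\rm src} \in X_\delta$. Writing the Euler–Lagrange equations for the continuous and discrete minimisation problems \eqref{eq:1.1} and \eqref{eq:1.5}, subtracting, and using that $e_\delta \in X_\delta$, one obtains an identity of the schematic form
\begin{align*}
(e_\delta, e_\delta)_X = \big(\Pi^X_\delta u^*_{\rm src} - u^*_{\rm src},\, e_\delta\big)_X + \text{(lower-order terms involving } V \text{ and the cubic nonlinearity)}.
\end{align*}
The first term on the right is immediately controlled by $\Vert \Pi^X_\delta u^*_{\rm src} - u^*_{\rm src}\Vert_X \Vert e_\delta\Vert_X$; but this alone only gives a trivial bound, so the key is to rewrite the cubic term $\int_\Omega \big((u^*_{\rm src})^3 - (u^*_{\delta,\rm src})^3\big)\,e_\delta$ by factoring $(u^*_{\rm src})^3 - (u^*_{\delta,\rm src})^3 = (u^*_{\rm src} - u^*_{\delta,\rm src})\big((u^*_{\rm src})^2 + u^*_{\rm src} u^*_{\delta,\rm src} + (u^*_{\delta,\rm src})^2\big)$, and similarly the potential term, so that everything is expressed through the auxiliary potential $V_\delta$ (and $\widetilde V_\delta = V_\delta - 1$) of Definition \ref{def:v_aux}. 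One then further splits $u^*_{\rm src} - u^*_{\delta,\rm src} = (u^*_{\rm src} - \Pi^X_\delta u^*_{\rm src}) + e_\delta$.

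For the first estimate \eqref{eq:source_lower_rates}, after this rearrangement the terms carrying a factor of $e_\delta$ can be absorbed into the left-hand side for $\delta$ small (using the $L^\infty$-boundedness of $V_\delta$ from Lemma \ref{lem:combine_1}, Cauchy–Schwarz, and the Poincaré/equivalence-of-norms control $\Vert e_\delta\Vert_{L^2} \lesssim \Vert e_\delta\Vert_X$), while the remaining terms involve $\int_\Omega V_\delta\, \varphi\, (u^*_{\rm src} - \Pi^X_\delta u^*_{\rm src})$ with $\varphi \in \{u^*_{\rm src}, u^*_{\delta,\rm src}, e_\delta\}$. Crucially, those of these with $\varphi \in \{u^*_{\rm src}, u^*_{\delta,\rm src}\}$ can be estimated using the \emph{duality}/Aubin–Nitsche-type gain: $\Vert u^*_{\rm src} - \Pi^X_\delta u^*_{\rm src}\Vert_{L^2} \lesssim \delta \Vert u^*_{\rm src} - \Pi^X_\delta u^*_{\rm src}\Vert_X$ from \eqref{eq:assum_1b}, yielding the extra factor $\delta$. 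Combining gives $\Vert e_\delta\Vert_X^2 \lesssim \delta \Vert u^*_{\rm src} - \Pi^X_\delta u^*_{\rm src}\Vert_X \Vert e_\delta\Vert_X + (\text{absorbed})$, hence \eqref{eq:source_lower_rates}.

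For the improved estimate \eqref{eq:source_higher_rates}, the idea is to handle the critical term $\int_\Omega \widetilde V_\delta\, \varphi\, (\chi - \Pi^X_\delta \chi)$ — which is exactly the object appearing in \textbf{Assumption A.3} with $q^*_{\rm src}$ — by invoking \eqref{eq:assum_4b} rather than a naive Cauchy–Schwarz bound, thereby gaining a factor $\delta^{q^*_{\rm src}+1}$ instead of $\delta$. Concretely, one bounds $\Vert e_\delta\Vert_X^2$ (equivalently, the coercive bilinear form $\int |\nabla e_\delta|^2 + \int \widetilde V_\delta |e_\delta|^2 \gtrsim \Vert e_\delta\Vert_X^2$ for $\delta$ small, using Lemma \ref{lem:combine_1} together with the known $X$-convergence $u^*_{\delta,\rm src}\to u^*_{\rm src}$ and $u^*_{\rm eig}$-positivity-type arguments to ensure positivity of $V_\delta$ near the minimiser) by terms of the form $\delta^{q^*_{\rm src}+1}\Vert u^*_{\rm src}\Vert_{H^2}\Vert u^*_{\rm src}-\Pi^X_\delta u^*_{\rm src}\Vert_X$ plus an $e_\delta$-quadratic remainder that, after Assumption A.3 applied with $\chi = e_\delta$ and $\varphi\in\{u^*_{\rm src},u^*_{\delta,\rm src}\}$, carries a small factor $\delta^{q^*_{\rm src}}$ and so can again be absorbed; this absorption is what produces the weighted left-hand side $\Vert e_\delta\Vert_{L^2} + \delta^{q^*_{\rm src}/2}\Vert e_\delta\Vert_X$ (the $\delta^{q^*_{\rm src}/2}$ weight being the natural Young's-inequality split between the $L^2$ and $X$ parts). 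A standard Aubin–Nitsche duality argument — testing the error equation against the solution of the adjoint problem with right-hand side $e_\delta$ — upgrades the $X$-norm bound to the stated $L^2$ bound.

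I expect the main obstacle to be \emph{bookkeeping the absorption carefully}: one must verify that every term quadratic in $e_\delta$ genuinely comes with a positive power of $\delta$ (using that $u^*_{\delta,\rm src}$ itself converges, not just $\Pi^X_\delta u^*_{\rm src}$) so that it can be moved to the left-hand side for $\delta$ small, and simultaneously that the coercivity constant of the form $v\mapsto \int|\nabla v|^2 + \int \widetilde V_\delta v^2$ on $X_\delta$ is bounded below uniformly in $\delta$ — the latter requiring a perturbation argument off the continuous coercive form, which is where the sign condition $V > 0$ (resp. $V_{\min} > 0$) in the two settings and the $L^\infty$-bound of Lemma \ref{lem:combine_1} are essential. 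The nonlinearity also forces one to track cross-terms like $\int u^*_{\rm src}(u^*_{\rm src}-\Pi^X_\delta u^*_{\rm src}) e_\delta$ that are not directly covered by Assumption A.3 (since they lack the $\widetilde V_\delta$ factor) and must instead be dispatched by the $L^2$-duality estimate \eqref{eq:assum_1b} together with $\Vert u^*_{\rm src}\Vert_{L^\infty} < \infty$.
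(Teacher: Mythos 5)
The first estimate \eqref{eq:source_lower_rates} is recovered by roughly the same mechanism as the paper (Galerkin orthogonality for the error equation, the $(\cdot,\cdot)_X$--orthogonality of $\Pi^X_\delta$ killing the leading term, and Estimate \eqref{eq:assum_1b} producing the factor $\delta$), but the route you sketch for the improved estimate \eqref{eq:source_higher_rates} has two genuine gaps.

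First, the order of operations is backwards and the claimed ``standard Aubin--Nitsche upgrade from the $X$-bound to the $L^2$-bound'' does not exist here. The quantity $e_\delta := \Pi^X_\delta u^*_{\rm src} - u^*_{\delta,\rm src}$ is already in $X_\delta$, so the standard duality argument --- which improves $\|u^*-u^*_\delta\|_{L^2}$ over $\|u^*-u^*_\delta\|_X$ --- gives nothing extra for $e_\delta$. What the paper actually does is the reverse: it first obtains the improved $L^2$-bound $\|e_\delta\|_{L^2}\lesssim\delta^{q^*_{\rm src}+1}\|u^*-\Pi^X_\delta u^*\|_X$ via a \emph{dedicated} adjoint problem $a_\delta(v,\vartheta)=(v,e_\delta)_{L^2}$ with $\|\vartheta\|_{H^2}\lesssim\|e_\delta\|_{L^2}$, splits the resulting expression into pieces controlled by \textbf{A.1} and a third term $\int_\Omega\widetilde V_\delta\,\vartheta\,(u^*-\Pi^X_\delta u^*)$ that is precisely where \textbf{A.3} enters (with $\varphi=\vartheta$, $\chi=u^*_{\rm src}$), and only afterwards plugs the $L^2$-bound back into the inequality $\alpha\|e_\delta\|_X^2\le\|\widetilde V_\delta\|_{L^\infty}\|u^*-\Pi^X_\delta u^*\|_{L^2}\|e_\delta\|_{L^2}$ to get the improved $X$-norm rate. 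Without first establishing the $L^2$-bound, the $\delta^{q^*_{\rm src}/2}$ weight in \eqref{eq:source_higher_rates} has no source.

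Second, your proposed invocation of \textbf{A.3} ``with $\chi=e_\delta$'' is vacuous: since $e_\delta\in X_\delta$, we have $e_\delta-\Pi^X_\delta e_\delta=0$, so \eqref{eq:assum_4b} reduces to $0\lesssim 0$. The assumption only has content when $\chi\notin X_\delta$, and in the paper it is applied exactly once, with $\chi=u^*_{\rm src}$ and $\varphi=\vartheta$. Relatedly, the bilinear form you propose to use for coercivity, $v\mapsto\int|\nabla v|^2+\int\widetilde V_\delta v^2$, need not be coercive: $\widetilde V_\delta=V_\delta-1$ can be negative where $V$ is small. Coercivity in the paper is established for $a_\delta(v,w)=\int\nabla v\cdot\nabla w+\int V_\delta v w$ using $V_\delta\ge V>0$ (via $V_\delta=V+\tfrac12[(u^*)^2+(u^*_\delta)^2+(u^*+u^*_\delta)^2]$); the $\widetilde V_\delta$ that appears after the $X$-orthogonality cancellation only needs an $L^\infty$-bound (Lemma~\ref{lem:combine_1}), not a sign.
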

\begin{proof}
     We begin by introducing, for any $\delta>0$, the bilinear form $a_\delta \colon X \times X \rightarrow \R$ defined as
\begin{align}\label{eq:Spectral_new_bilinear}
	\forall v, w \in X\colon \quad     a_\delta(v, w):=& \int_\Omega \nabla v({\bf x}) \cdot \nabla w({\bf x}) d{\bf x}+ \int_\Omega  {V}_{\delta}({\bf x})v({\bf x}) w({\bf x}) d{\bf x}, 
\end{align}
where the auxiliary potential ${V}_{\delta}= V + \big(u^*_{\rm src}\big)^2 + \big(u^*_{\delta, \rm src}\big)^2 + u^*_{\rm src}u^*_{\delta, \rm src}$ has been introduced through Equation \eqref{eq:v_aux} in Definition \ref{def:v_aux}.  

It follows from Lemma \ref{lem:combine_1} that $\{{V}_{\delta}\}_{\delta >0}$ is uniformly bounded in $L^{\infty}(\Omega)$ which immediately yields that the bilinear form $a_\delta \colon X \times X \rightarrow \mathbb{R}$ is continuous with uniformly bounded (w.r.t $\delta$) continuity constant. Since, we can additionally express ${V}_{\delta}$ in the form
\begin{align*}
    {V}_{\delta}= V + \frac{1}{2}\Bigl[\big(u^*_{\rm src}\big)^2 + \big(u^*_{\delta, \rm src}\big)^2 + \big(u^*_{\rm src}+u^*_{\delta, \rm src}\big)^2\Bigr]
\end{align*}
we also have that each ${V}_{\delta } \ge V$, from which we deduce that $a_\delta$ is coercive on $X$ with coercivity constant $\alpha >0$ independent of $\delta$.

Next, we claim that the bilinear form $a_\delta$ satisfies a Galerkin orthogonality~relation for every $\delta >0$. To see this, note that the exact minimiser $u^*_{\rm src}$ and discrete minimiser $u^*_{\delta, \rm src}$ satisfy the following Euler-Lagrange equations:
\begin{align*}
\forall v \in X\colon &\qquad (\nabla u^*_{\rm src},\nabla v)_{L^2(\Omega)}+  (V u^*_{\rm src}, v )_{L^2(\Omega)} +\big((u^*_{\rm src})^3, v \big)_{L^2(\Omega)}&&= (f, v)_{L^2(\Omega)},\\[1em]
\forall v_{\delta} \in X_{\delta}\colon& \qquad (\nabla u_{\delta, \rm src}^*,\nabla v_{\delta})_{L^2(\Omega)}+  (V u_{\delta, \rm src}^*, v_{\delta} )_{L^2(\Omega)} +\big((u_{\delta, \rm src}^*)^3, v_{\delta} \big)_{L^2(\Omega)}&&= (f, v_{\delta})_{L^2(\Omega)}.
\end{align*}
Consequently, for all $v_{\delta} \in X_{\delta}$ we must have
\begin{align*}
    0&=(\nabla u^*_{\rm src}- \nabla u_{\delta, \rm src}^*,\nabla v_{\delta})_{L^2(\Omega)}+  \big((u^*_{\rm src}-u_{\delta, \rm src}^*)V, v_{\delta} \big)_{L^2(\Omega)} +\big((u^*_{\rm src})^3-(u_{\delta, \rm src}^*)^3, v_{\delta} \big)_{L^2(\Omega)}\\
&=(\nabla u^*_{\rm src}- \nabla u_{\delta, \rm src}^*,\nabla v_{\delta})_{L^2(\Omega)}+  \Big((u^*_{\rm src}-u_{\delta, \rm src}^*)\big(V+ (u_{\rm src}^*)^2+ (u^*_{\delta, \rm src})^2 + u^*_{\rm src}u^*_{\delta, \rm src}\big), v_{\delta} \Big)_{L^2(\Omega)}. 
\end{align*}

Recalling now Definition \ref{def:v_aux} of the auxiliary potential~$V_{\delta}, ~ \delta >0$ and the definition of the bilinear form~$a_{\delta}$ given by Equation \eqref{eq:Spectral_new_bilinear}, we deduce the Galerkin orthogonality relation
\begin{align}\label{eq:1_poly_pre}
\forall v_\delta \in X_{\delta}\colon \qquad a_\delta(u^*_{\rm src}- u_{\delta, \rm src}^{*}, v_\delta)&=0, \qquad \text{so that}\\
a_\delta(u^*_{\rm src}-\Pi_\delta^{X} u^*_{\rm src}, u^*_{\delta, \rm src}-\Pi_\delta^{X} u^*_{\rm src})&= a_\delta(u^*_{\delta, \rm src}-\Pi_\delta^{X} u^*_{\rm src}, u^*_{\delta, \rm src}-\Pi_\delta^{X} u^*_{\rm src}). \label{eq:1_poly}
\end{align}
Since the projection operator $\Pi_\delta^{X}$ is, by definition, $(\cdot, \cdot)_X$-orthogonal, we can further deduce that
\begin{align}\nonumber
a_\delta(u^*_{\rm src}-\Pi_\delta^{X} u^*_{\rm src}, u^*_{\delta, \rm src}-\Pi_\delta^{X} u^*_{\rm src}) =& \int_\Omega \widetilde{V}_{\delta}({\bf x}) [u^*_{\rm src}-\Pi_\delta^{X} u^*_{\rm src}] ({\bf x}) [u^*_{\delta, \rm src}-\Pi_\delta^{X} u^*_{\rm src}]({\bf x})\; d{\bf{x}}\\[0.5em]
\le& \| \widetilde{V}_{\delta}\|_{L^\infty(\Omega)} \|u^*_{\rm src}-\Pi_\delta^{X} u^*_{\rm src}\|_{L^2(\Omega)} \|u^*_{\delta, \rm src}-\Pi_\delta^{X} u^*_{\rm src}\|_{L^2(\Omega)}.\label{555b}
\end{align}

Combining now Equation \eqref{eq:1_poly} with Estimate \eqref{555b} and using the coercivity of $a_\delta$ we obtain
\begin{equation} \label{eq:30ab}
\alpha \| u^*_{\delta, \rm src}-\Pi_\delta^{X} u^*_{\rm src} \|_{X}^2 \le \| \widetilde{V}_{\delta}\|_{L^\infty(\Omega)} \|u^*_{\rm src}-\Pi_\delta^{X} u^*_{\rm src}\|_{L^2(\Omega)} \|u^*_{\delta, \rm src}-\Pi_\delta^{X} u^*_{\rm src}\|_{L^2(\Omega)}.
\end{equation}

In order to prove the convergence estimate \eqref{eq:source_lower_rates}, we make use of the convergence estimate~\eqref{eq:assum_1b} postulated in~\textbf{Assumption~A.1} to deduce that for all $\delta >0$ it holds that
\begin{equation*}
\alpha \| u^*_{\delta, \rm src}-\Pi_\delta^{X} u^*_{\rm src} \|_{X}^2 \le \delta \| \widetilde{V}_{\delta}\|_{L^\infty(\Omega)}  \|u^*_{\rm src}-\Pi_\delta^{X} u^*_{\rm src}\|_{X}  \|u^*_{\delta, \rm src}-\Pi_\delta^{X} u^*_{\rm src}\|_{X}.
\end{equation*}
 The convergence estimate \eqref{eq:source_lower_rates} now follows by recalling the uniform boundedness in $L^{\infty}(\Omega)$ of $\{\widetilde{V}_{\delta}\}_{\delta >0}$ yielded by Lemma~\ref{lem:combine_1}.

\vspace{2mm}
To prove the improved convergence estimate \eqref{eq:source_higher_rates}, we return to Inequality \eqref{eq:30ab} and notice that we can write  
\begin{align}\nonumber
\alpha \| u^*_{\delta, \rm src}-\Pi_\delta^{X} u^*_{\rm src} \|_{X}^2 &\le \| \widetilde{V}_{\delta}\|_{L^\infty(\Omega)} \|u^*_{\rm src}-\Pi_\delta^{X} u^*_{\rm src}\|_{L^2(\Omega)} \|u^*_{\delta, \rm src}-\Pi_\delta^{X} u^*_{\rm src}\|_{L^2(\Omega)}\\
 &\le \| \widetilde{V}_{\delta}\|_{L^\infty(\Omega)} \Bigl[  \|u^*_{\rm src}-u^*_{\delta, \rm src}\|_{L^2(\Omega)}+\|u^*_{\delta, \rm src}-\Pi_\delta^{X} u^*_{\rm src}\|_{L^2(\Omega)} \Bigr]\|u^*_{\delta, \rm src}-\Pi_\delta^{X} u^*_{\rm src}\|_{L^2(\Omega)}.\label{eq:30b}
\end{align}

We first bound the term $ \Vert u^*_{\delta, \rm src}-\Pi_\delta^{X}  u^*_{\rm src}\Vert_{L^2(\Omega)} $. To do so, we introduce for any $\delta>0$, the following adjoint problem: we seek $ \vartheta\in X $ such that for all $v \in X$ it holds that
	\begin{equation}\label{def:adjoint_3}
	a_\delta(v, \vartheta)=\int_\Omega v({\bf{x}})[u^*_{\delta, \rm src}-\Pi_\delta^{X} u^*_{\rm src}](\bf{x} ) \; d{\bf{x}}.
	\end{equation}	
It follows from the coercivity and continuity of the bilinear form $a_\delta$ that Equation \eqref{def:adjoint_3} is well-posed. Since, additionally, $u^*_{\delta, \rm src}-\Pi_\delta^{X} u^*_{\rm src} \in L^2(\Omega)$ and the auxiliary potential ${V}_{\delta} \in L^{\infty}(\Omega)$, it follows from standard elliptic regularity theory and a bootstrapping argument that $\vartheta \in H^2(\Omega)$ and satisfies
	\begin{equation}\label{eq:bound_H_2_2_poly}
	\Vert \vartheta \Vert_{H^2(\Omega)}\lsim \Vert u^*_{\delta, \rm src}-\Pi_\delta^{X}  u^*_{\rm src}\Vert_{L^2(\Omega)}.
	\end{equation}
	Combining now the symmetry of $a_\delta$ and the Galerkin orthogonality property from Equation~\eqref{eq:1_poly_pre}, we deduce from the adjoint equation \eqref{def:adjoint_3} that
\begin{align*}
    \Vert u^*_{\delta, \rm src}-\Pi_\delta^{X} u^*_{\rm src}\Vert_{L^2(\Omega)}^2 =a_\delta(\vartheta, u^*_{\delta, \rm src}-\Pi_\delta^{X} u^*_{\rm src})= a_\delta(\vartheta-\Pi_\delta^{X} \vartheta, u^*_{\delta, \rm src}-\Pi_\delta^{X} u^*_{\rm src})+a_\delta(\Pi_\delta^{X} \vartheta, u^*_{\rm src}-\Pi_\delta^{X} u_{\rm src}^*).
\end{align*} 
Appealing once again to the $(\cdot, \cdot)_X$-orthogonality of the projection operator $\Pi_\delta^{X}$, we obtain that
\begin{align}\nonumber
\Vert u^*_{\delta, \rm src}-\Pi_\delta^{X} u^*_{\rm src}\Vert_{L^2(\Omega)}^2 &=  \int_\Omega \widetilde{V}_{\delta}({\bf x})  [\vartheta-\Pi_\delta^{X} \vartheta] [u^*_{\delta, \rm src}-\Pi_\delta^{X} u^*_{\rm src}]({\bf x})\; d{\bf{x}}\\ \nonumber
&+\int_\Omega \widetilde{V}_{\delta}({\bf x}) [\Pi_\delta^{X}  \vartheta]({\bf{x}})[u^*_{\rm src}-\Pi_\delta^{X} u^*_{\rm src}]({\bf x})\; d{\bf{x}}\\[0.5em] \nonumber
&=\underbrace{\int_\Omega \widetilde{V}_{\delta}({\bf x})  [\vartheta-\Pi_\delta^{X} \vartheta] [u^*_{\delta, \rm src}-\Pi_\delta^{X} u^*_{\rm src}]({\bf x})\; d{\bf{x}}}_{:= {\rm (I)}}\\ \nonumber
&-\underbrace{\int_\Omega \widetilde{V}_{\delta}({\bf x})[\vartheta-\Pi_\delta^{X} \vartheta]({\bf{x}})[u^*_{\rm src}-\Pi_\delta^{X} u^*_{\rm src}]({\bf x})\; d{\bf{x}}}_{:= {\rm (II)}}\\
&+\int_\Omega \widetilde{V}_{\delta}({\bf x})\vartheta({\bf{x}})[u^*_{\rm src}-\Pi_\delta^{X} u^*_{\rm src}]({\bf x})\; d{\bf{x}}. \label{er1b}
\end{align}

To bound Estimate \eqref{er1b}, we first make use of Hölder's inequality, Equation \eqref{eq:assum_1a} from \textbf{Assumption A.1}, and Estimate \eqref{eq:bound_H_2_2_poly} to deduce that
\begin{align}\nonumber
    {\rm (I)+ (II)}=&\int_\Omega \widetilde{V}_{\delta}({\bf x})  [\vartheta-\Pi_\delta^{X} \vartheta] (\bold{x})[u^*_{\delta, \rm src}-\Pi_\delta^{X} u^*_{\rm src}]({\bf x})\; d{\bf{x}}-\int_\Omega \widetilde{V}_{\delta}({\bf x})[\vartheta-\Pi_\delta^{X} \vartheta]({\bf{x}})[u^*_{\rm src}-\Pi_\delta^{X} u^*_{\rm src}]({\bf x})\; d{\bf{x}}\\[1em] \nonumber
    \leq& \Vert \widetilde{V}_{\delta}\Vert_{L^{\infty}(\Omega)}  \Vert \vartheta-\Pi_N^{X} \vartheta\Vert_{L^2(\Omega)} \big(\Vert u^*_{\delta, \rm src}-\Pi_\delta^{X} u^*_{\rm src}\Vert_{L^2(\Omega)}+\Vert u^*_{\rm src}-\Pi_\delta^{X} u^*_{\rm src}\Vert_{L^2(\Omega)}\big)\\[1em] \nonumber
    \lsim& \delta^{2}\Vert \widetilde{V}_{\delta}\Vert_{L^{\infty}(\Omega)}  \Vert u^*_{\delta, \rm src}-\Pi_\delta^{X} u^*_{\rm src}\Vert_{L^2(\Omega)} \big(\Vert u^*_{\delta, \rm src}-\Pi_\delta^{X} u^*_{\rm src}\Vert_{L^2(\Omega)}+\Vert u^*_{\rm src}-\Pi_\delta^{X} u^*_{\rm src}\Vert_{L^2(\Omega)}\big)\\[1em] \label{eq:mistake}
    \lsim& \delta^{2}\Vert \widetilde{V}_{\delta}\Vert_{L^{\infty}(\Omega)} \big(\Vert u^*_{\delta, \rm src}-\Pi_\delta^{X} u^*_{\rm src}\Vert^2_{L^2(\Omega)}+\Vert u^*_{\delta, \rm src}-\Pi_\delta^{X} u^*_{\rm src}\Vert_{L^2(\Omega)}\Vert u^*_{\rm src}-\Pi_\delta^{X} u^*_{\rm src}\Vert_{L^2(\Omega)}\big).
\end{align}
Combining now Inequality \eqref{eq:mistake} with the prior Estimate \eqref{er1b} and recalling the uniform boundedness of $\{\widetilde{V}_{\delta}\}_{\delta >0}$ in $L^{\infty}(\Omega)$ as a corollary of Lemma \ref{lem:combine_1}, we deduce that 
\begin{align*}
    \Vert u^*_{\delta, \rm src}-\Pi_\delta^{X} u^*_{\rm src}\Vert_{L^2(\Omega)}^2 &\lsim  \delta^{2}\Vert u^*_{\delta, \rm src}-\Pi_\delta^{X} u^*_{\rm src}\Vert^2_{L^2(\Omega)}\\[0.5em]
    &+\delta^{2}\Vert u^*_{\delta, \rm src}-\Pi_\delta^{X} u^*_{\rm src}\Vert_{L^2(\Omega)}\Vert u^*_{\rm src}-\Pi_\delta^{X} u^*_{\rm src}\Vert_{L^2(\Omega)}\\[0.5em]
    &+\int_\Omega \widetilde{V}_{\delta}({\bf x})\vartheta({\bf{x}})[u^*_{\rm src}-\Pi_\delta^{X} u^*_{\rm src}]({\bf x})\; d{\bf{x}}.
\end{align*}
Notice now that the norm estimate $ \Vert u^*_{\delta, \rm src}-\Pi_\delta^{X} u^*_{\rm src}\Vert^2_{L^2(\Omega)}$ appears on the right-hand side of the above inequality with pre-factor $\delta^2$. Consequently, for all $\delta$ sufficiently small, this norm estimate can be subsumed by its counterpart appearing on the left-hand side of the above inequality (which has pre-factor independent of $\delta$). It follows that for all $\delta$ sufficiently small, we must have
\begin{align}\nonumber
\Vert u^*_{\delta, \rm src}-\Pi_\delta^{X} u^*_{\rm src}\Vert_{L^2(\Omega)}^2 &\lsim \delta^{2} \Vert u^*_{\delta, \rm src}-\Pi_\delta^{X} u^*_{\rm src}\Vert_{L^2(\Omega)}\Vert u^*_{\rm src}-\Pi_\delta^{X} u^*_{\rm src}\Vert_{L^2(\Omega)}\\ \label{eq:mistake_2}
&+ \int_\Omega \widetilde{V}_{\delta}({\bf x})\vartheta({\bf{x}})[u^*_{\rm src}-\Pi_\delta^{X} u^*_{\rm src}]({\bf x})\; d{\bf{x}}.
\end{align}
To bound the second term appearing on the right-hand side of Inequality \eqref{eq:mistake_2}, we make use of Estimate \eqref{eq:assum_4b} in \textbf{Assumption A.3} to deduce the existence of a real number $q^*_{\rm src}\in (0, 2]$ such that for all $\delta$ sufficiently small it holds that
\begin{align}\nonumber
\Vert u^*_{\delta, \rm src}-\Pi_\delta^{X} u^*_{\rm src}\Vert_{L^2(\Omega)}^2 &\lsim \delta^{2}\Vert u^*_{\delta, \rm src}-\Pi_\delta^{X} u^*_{\rm src}\Vert_{L^2(\Omega)}\Vert u^*_{\rm src}-\Pi_\delta^{X} u^*_{\rm src}\Vert_{L^2(\Omega)}\\ \nonumber
&+\delta^{q^*_{\rm src}+1}\Vert \vartheta \Vert_{H^2(\Omega)} \Vert u^*_{\rm src}-\Pi_\delta^{X} u^*_{\rm src}\Vert_{X)}\\[0.5em] \nonumber
&\lsim \delta^{2} \Vert u^*_{\delta, \rm src}-\Pi_\delta^{X} u^*_{\rm src}\Vert_{L^2(\Omega)}\Vert u^*_{\rm src}-\Pi_\delta^{X} u^*_{\rm src}\Vert_{L^2(\Omega)}\\ \nonumber
&+ \delta^{q^*_{\rm src}+1}\Vert  u^*_{\delta, \rm src}-\Pi_\delta^{X}  u^*_{\rm src}\Vert_{L^2(\Omega)} \Vert u^*_{\rm src}-\Pi_\delta^{X} u^*_{\rm src}\Vert_{X}\\ \nonumber
&\lsim \delta^{3} \Vert u^*_{\delta, \rm src}-\Pi_\delta^{X} u^*_{\rm src}\Vert_{L^2(\Omega)}\Vert u^*_{\rm src}-\Pi_\delta^{X} u^*_{\rm src}\Vert_{X}\\ \nonumber
&+ \delta^{q^*_{\rm src}+1}\Vert  u^*_{\delta, \rm src}-\Pi_\delta^{X}  u^*_{\rm src}\Vert_{L^2(\Omega)} \Vert u^*_{\rm src}-\Pi_\delta^{X} u^*_{\rm src}\Vert_{X}, \nonumber
\end{align}
where the second step follows from Inequality \eqref{eq:bound_H_2_2_poly} and the third step follows from Estimate~\eqref{eq:assum_1b} in \textbf{Assumption A.1}. Since $q_{\rm src}^* \in (0, 2]$, we finally deduce the sought-after estimate
\begin{align}\label{eq:Has_new_3}
\Vert u^*_{\delta, \rm src}-\Pi_\delta^{X} u^*_{\rm src}\Vert_{L^2(\Omega)} &\lsim \delta^{q^*_{\rm src}+1} \Vert  u^*_{\rm src}-\Pi_\delta^{X}  u^*_{\rm src}\Vert_{X}.
\end{align}

To complete the proof and obtain also the improved convergence rates with respect to the $\Vert \cdot \Vert_{X}$-norm, we simply insert Estimate \eqref{eq:Has_new_3} into the earlier $\Vert \cdot \Vert_{X}$-norm estimate~\eqref{eq:30b} and make use of the convergence rates postulated in \textbf{Assumption A.1}.

\end{proof}

We end this section by remarking that if, in the Aubin-Nitsche trick, a full order $\delta$ of improvement is not achieved between the $\Vert \cdot \Vert_{L^2(\Omega)}$ and $\Vert \cdot \Vert_{X}$ convergence rates (see, e.g., Estimates \eqref{eq:assum_1a}-\eqref{eq:assum_1b}), then the conclusions of Lemma \ref{lem:combine_2} are still valid in the sense that the convergence of $\Pi_\delta^X u^*_{\rm src}-u_{\delta, \rm src}^{*}$ in both the $\Vert \cdot \Vert_{L^2(\Omega)}$-norm and $\Vert \cdot \Vert_{X}$-norm is improved, albeit not at the same order.

\section{An Abstract Superconvergence Result for the Constrained Minimisation Problem}\label{sec:eig}

Throughout this section, we assume that the computational domain $\Omega$, the function space $X$, and the effective potential $V$ are chosen in accordance with either \textbf{Setting One} or \textbf{Setting Two}, and we denote by $u_{\rm eig}^* \in X$ the unique positive minimiser to the constrained minimisation problem~\eqref{eq:1.2}. As in Section \ref{sec:source}, the space $X$ is equipped with its natural inner-product (see Equation \eqref{eq:X_inner}).

Following the approach of Section \ref{sec:source}, the aim of this section is to state and prove some abstract convergence lemmas that are valid for any Galerkin approximation of the constrained minimisation problem~\eqref{eq:1.2} that satisfies certain assumptions. We will then apply, in the forthcoming Sections \ref{sec:4}-\ref{sec:6}, this abstract theory to the concrete examples of the conforming finite element method and spectral polynomial and spectral Fourier methods. Before proceeding with this analysis however, it will be useful to introduce some additional notation and recall some basic properties of the constrained minimisation problem~\eqref{eq:1.2}. As usual, the results in this section will require the notion of function regularity in terms of the usual Sobolev scale, and we refer to, e.g., \cite[Chapter 3]{mclean2000strongly} for precise definitions of these spaces.

To begin with, we introduce, for any $v \in L^{3/2}(\Omega)$, the operator $\mathcal{A}_v \colon X \rightarrow X^*$ defined~as
\begin{align*}
    \forall z, w \in X \colon \quad \langle \mathcal{A}_v z,  w\rangle_{X^* \times X}:= ( \nabla z, \nabla w)_{L^2(\Omega} + (Vz,w)_{L^2(\Omega)} + (v^2z,w)_{L^2(\Omega},
\end{align*}
where $\langle \cdot, \cdot\rangle_{X^* \times X}$ denotes the duality pairing between $X$ and its dual space $X^*$. It is easy to check that, under the assumptions we have imposed on the effective potential $V$, the operator $\mathcal{A}_v$ is bounded below and above for any $v \in X\cap H^{d/2 + \epsilon}(\Omega), ~ \epsilon >0$. Since the exact minimiser $u^*_{\rm eig} \in X \cap H^2(\Omega)$ as a consequence of elliptic regularity theory and a boot-strapping argument (recall Estimate \eqref{eq:elliptic_regularity}), the operator $\mathcal{A}_{u^*_{\rm eig}}$ is also bounded below and above. 

It is straightforward to verify that the exact minimiser $u^*_{\rm eig} \in X$ satisfies the eigenvalue problem
\begin{equation}\label{eq:Euler_Lagrange_continuous}
    \forall v \in X \colon \qquad \langle \mathcal{A}_{u^*_{\rm eig}} u^*_{\rm eig}, v \rangle_{X^* \times X}= \lambda^*(u^*_{\rm eig}, v)_{L^2(\Omega)}.    
\end{equation}
Indeed, Equation \eqref{eq:Euler_Lagrange_continuous} is nothing else than the Euler-Lagrange equation associated with the constrained minimisation problem \eqref{eq:1.2}. Let us emphasise here that $\lambda^*\in \mathbb{R}$ is the lowest eigenvalue of the operator $\mathcal{A}_{u^*_{\rm eig}}$ (see, e.g., \cite{dusson2017posteriori}).

It is known (again, see, e.g., \cite{dusson2017posteriori}) that the energy functional $\mathcal{E}_{\rm eig}$ introduced in Equation \eqref{eq:1.2} is twice continuously differentiable with second derivative $\mathcal{E}_{\rm eig}''[v] \colon X \rightarrow X^*$  at $v \in X$  given by
\begin{equation}\label{eq:energy_derivative}
\begin{split}
    \forall z, w \in X \colon \quad \langle \mathcal{E}_{\rm eig}''[v]z, w\rangle_{X^* \times X} &= ( \nabla z, \nabla w)_{L^2(\Omega)} + (Vz,w)_{L^2(\Omega)} + 3(v^2z,w)_{L^2(\Omega)}\\
    &=\langle \mathcal{A}_v z,  w\rangle_{X^* \times X}+ 2 (v^2z,w)_{L^2(\Omega)}.
\end{split}
\end{equation}
It can also be shown (again, see \cite{dusson2017posteriori}) that the \emph{shifted} second derivative $\mathcal{E}_{\rm eig}''[u^*_{\rm eig}] -\lambda^* \colon X \rightarrow X^*$ satisfies the estimates
\begin{equation}\label{eq:shifted_energy_estimates}
\begin{split}
\exists \alpha_{\mathcal{E}}>0, \forall w \in X\colon  \qquad \alpha_{\mathcal{E}} \Vert w \Vert^2_{X} \leq &\left\langle \big(\mathcal{E}_{\rm eig}''[u^*_{\rm eig}] -\lambda^* \big) w, w\right \rangle_{X \times X^*}\\[0.5em]
\exists \beta_{\mathcal{E}}>0, ~ \forall z, w \in X\colon  \hspace{1.41cm} \quad &\left\langle \big(\mathcal{E}_{\rm eig}''[u^*_{\rm eig}] -\lambda^* \big)z, w\right \rangle_{X \times X^*}\leq \beta_{\mathcal{E}} \Vert z\Vert_{X} \Vert w\Vert_{X}.
\end{split}
\end{equation}


Following the notation used earlier in Section \ref{sec:source}, let us denote by $\{X_{\delta}\}_{\delta>0}$ a sequence of conforming approximation spaces of $X$ that satisfy the approximation property \eqref{eq:approximation_property}. Next, for any~$\delta>0$, we denote by $u^*_{\delta, \rm eig} \in X_\delta$ the solution of the discrete constrained minimisation problem~\eqref{eq:1.5} in $X_\delta$ that satisfies $\big( u^*_{\delta, \rm eig}, u^*_{\rm eig}\big)_{L^2(\Omega)}\geq 0$. As mentioned in the introduction, the existence of such a discrete minimiser (at least for $\delta$ small enough) is guaranteed for the classical choices of finite element, spectral polynomial and spectral Fourier approximation spaces $\{X_\delta\}_{\delta>0}$. It is furthermore straightforward to deduce that each discrete minimiser $u^*_{\delta, \rm eig} \in X_\delta$ satisfies the discrete eigenvalue problem
\begin{equation}\label{eq:Euler-Lagrange_discrete}
\forall v_\delta \in X_\delta \colon \qquad \langle \mathcal{A}_{u^*_{\delta, \rm eig}}  u^*_{\delta, \rm eig} , v_\delta\rangle_{X \times X^*}= \lambda^*_\delta (u^*_{\delta, \rm eig}, v_\delta)_{L^2(\Omega)}.    
\end{equation}
Indeed, Equation \eqref{eq:Euler-Lagrange_discrete} is simply the Euler-Lagrange equation associated with the \emph{discrete} constrained minimisation problem~\eqref{eq:1.6}. As before, let us emphasise that $\lambda^*_{\delta}$ is the smallest eigenvalue of the discrete operator $\mathcal{A}_{u^*_{\delta, \rm eig}} \colon X_\delta \rightarrow (X_\delta)^*$ (again, see\cite{dusson2017posteriori}). 
Finally, we recall the following important result from \cite[Theorem 1]{cances2010numerical}: denoting by $\lambda^*$ and $\lambda^*_\delta$, the lowest eigenvalues  of the infinite-dimensional eigenvalue problem~\eqref{eq:Euler_Lagrange_continuous} and the finite-dimensional eigenvalue problem~\eqref{eq:Euler-Lagrange_discrete} respectively, it holds that (remember that $\lambda^*_\delta \ge  \lambda^*$)
    \begin{align}\label{eq:eigenvalue_convergence}
     \lambda^*_\delta - \lambda^*  \lsim \Vert u^*_{\rm eig}- u^*_{\delta, \rm eig}\Vert^2_{X} + \Vert u^*_{\rm eig}- u^*_{\delta, \rm eig}\Vert_{L^2(\Omega)}.
\end{align}

\vspace{2mm}

As in Section~\ref{sec:source}, we now impose some assumptions on the present abstract Galerkin framework. These assumptions are simply analogues of the assumptions imposed in Section~\ref{sec:source}, adapted to the eigenvalue problem setting.  
\vspace{1mm}

\begin{description}
    \item[Assumption B.1 (Explicit Convergence Rates)]
    \end{description}
    We assume that there exists $r_{\rm eig} \geq 2 $ such that for all $\delta>0$ sufficiently small it holds that
    \begin{align}\label{eq:assum_eig_1a}
    \Vert u_{\rm eig}^* - u^*_{\delta, \rm eig} \Vert_{L^2(\Omega)} + \delta\Vert u_{\rm eig}^* - u^*_{\delta, \rm eig} \Vert_{X} \lsim \delta \underset{v_\delta \in X_\delta}{\inf} \Vert u_{\rm eig}^* - v_{\delta} \Vert_{X} \lsim \delta^{r_{\rm eig}} \Vert u_{\rm eig}^*\Vert_{H^{r_{\rm eig}}(\Omega)}
    \end{align}
    Additionally, we assume that the $(\cdot, \cdot)_X$-orthogonal projection operator $\Pi_\delta^X \colon X \rightarrow X_\delta$ satisfies for all $\delta>0$ sufficiently small and all $v \in H^2(\Omega)$ the estimate
    \begin{align}\label{eq:assum_eig_1b}
    \Vert v -\Pi_\delta^X v \Vert_{L^2(\Omega)} \leq \delta \Vert v -\Pi_\delta^X v \Vert_{X} \lsim \delta^{2} \Vert v\Vert_{H^{2}(\Omega)}.
    \end{align}

    \vspace{1mm}

\begin{description}
    \item[Assumption B.2 (Uniform Boundedness of Discrete Minimisers)]
    \end{description}
    We assume that the sequence of discrete constrained minimisers $\{u^{*}_{\delta, \rm eig}\}_{\delta >0}$ is uniformly bounded in $L^{\infty}(\Omega)$, i.e., there exists a $C>0$ such that for all $\delta >0$ it holds that
    \begin{align}\label{eq:assum_eig_3}
\Vert u^{*}_{\delta, \rm eig} \Vert_{L^{\infty}(\Omega)} \leq C.
    \end{align}
    
\vspace{2mm}

Let us remark here that -- similar to \textbf{Assumption A.2} in Section \ref{sec:source} -- \textbf{Assumption B.2} can be replaced with alternative assumptions based on the existence of a suitable projection operator and inverse inequalities (see Remark \ref{rem:inverse}).
\vspace{2mm}

Analogously to Section \ref{sec:source}, our stronger superconvergence result for Galerkin approximations to the constrained minimisation problem~\eqref{eq:1.2} will require a further assumption (cf. \textbf{Assumption A.3}) in addition to \textbf{Assumptions B.1} and \textbf{B.2}.

\begin{description}
    \item[Assumption B.3 (A Convergence Result for Generic Potentials)]
    \end{description}
   Recalling that $r_v >d/2$ is the Sobolev regularity index of the effective potential $V$, let $\widetilde{W} \in H^{r_v}(\Omega)$ be a generic potential and for any $\delta>0$, let $\Pi^X_\delta \colon X \rightarrow X_\delta$ denote the $(\cdot, \cdot)_X$-orthogonal projection operator onto $X_\delta$. We assume that there exists a real number $q^*_{\rm eig}\in (0, 2]$ such that for all $\delta>0$ sufficiently small, all $\varphi\in H^2(\Omega)\cap X$, and all $\chi \in X$ it holds that
    \begin{align}\label{eq:assum_4}
    \int_\Omega  \widetilde{W}(\bold{x}) \varphi({\bf x}) [\chi- \Pi^X_\delta \chi]({\bf x})\; d{\bf{x}} &\lsim \delta^{q^*_{\rm eig}+1}\Vert \varphi \Vert_{H^2(\Omega)} \Vert  \chi- \Pi^X_\delta \chi\Vert_{X},
    \end{align}
    and for all $\delta>0$ sufficiently small and all discrete constrained minimisers  $\{u^{*}_{\delta, \rm eig}\}_{\delta >0}$ it holds that
    \begin{align}
    \int_\Omega u^*_{\rm eig}({\bf x}) [u^*_{\rm eig}- u^*_{\delta, \rm eig}]({\bf x})\; d{\bf{x}} &\lsim \delta^{q^*_{\rm eig}+1}\Vert u^*_{\rm eig} \Vert_{H^2(\Omega)} \Vert  u^*_{\rm eig}- u^*_{\delta, \rm eig}\Vert_{X}. \label{eq:assum_4new}
\end{align}

\vspace{2mm}
We remark that \textbf{Assumption B.3} above is similar to \textbf{Assumption A.3} from the previous Section~\ref{sec:source}. The main difference between these two assumptions is (i) the auxiliary potential function $\widetilde{V}_{\delta}, ~\delta >0$ appearing in \textbf{Assumption A.3} is replaced with a $\delta$-independent generic potential $\widetilde{W}\in H^{r_v}(\Omega)$ in \textbf{Assumption B.3}, and (ii) we require an analogous convergence estimate to hold when the $(\cdot, \cdot)_X$-best approximation difference  $\chi- \Pi^X_\delta \chi$ is replaced with the Galerkin approximation difference $u^*_{\rm eig}- u^*_{\delta, \rm eig}$ and $\widetilde{W}\equiv 1$.

\vspace{2mm}
Equipped with \textbf{Assumptions B.1, B.2} and possibly  \textbf{B.3}, we are now ready to state and prove our abstract superconvergence result.

\begin{lemma}\label{lem:combine_eig_2}
   Assume the framework adopted in the current section, let \textbf{Assumptions B.1} and \textbf{B.2} hold, and for every $\delta>0$ let $\Pi^X_\delta \colon X \rightarrow X_\delta$ denote the $(\cdot, \cdot)_X$-orthogonal projection operator onto $X_\delta$. Then we have the error estimate
    \begin{align}\label{eq:eig_lower_rates}
      \Vert \Pi_\delta^X u^*_{\rm eig}-u_{\delta, \rm eig}^{*}\Vert_{X} &\lsim   \delta\Vert \Pi^X_{\delta} u^*_{\rm eig}-u^*_{\rm eig}\Vert_{X}.
    \end{align}
    If, in addition, \textbf{Assumption B.3} also holds, then we have the improved error estimate
    \begin{align}\label{eq:eig_higher_rates}
      \Vert \Pi_\delta^X u^*_{\rm eig}-u_{\delta, \rm eig}^{*}\Vert_{L^2(\Omega)} + \delta^{\frac{q^*_{\rm eig}}{2}}\Vert \Pi_\delta^X u^*_{\rm eig}-u_{\delta, \rm eig}^{*}\Vert_{X} &\lsim   \delta^{q^*_{\rm eig}+1}\Vert \Pi_{\delta}^X u^*_{\rm eig}-u_{\rm eig}^{*}\Vert_{X},
    \end{align}
    where $q^*_{\rm eig}\in (0, 2]$ is the constant introduced in \textbf{Assumption B.3}.
\end{lemma}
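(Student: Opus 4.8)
The plan is to mimic almost verbatim the argument of Lemma \ref{lem:combine_2}, replacing the source equation's Galerkin orthogonality by the one coming from the eigenvalue problems \eqref{eq:Euler_Lagrange_continuous} and \eqref{eq:Euler-Lagrange_discrete}. First I would introduce, for each $\delta>0$, the ``frozen-coefficient'' bilinear form $b_\delta(v,w) := \langle \mathcal{A}_{u^*_{\delta,\rm eig}} v, w\rangle_{X^*\times X} - \lambda^*(v,w)_{L^2(\Omega)}$, built from the discrete potential $V_{\delta,\rm eig} := V + (u^*_{\delta,\rm eig})^2 - \lambda^*$ (here one uses that $\lambda^*<0$ if necessary, or simply shifts by a large constant to guarantee coercivity; more honestly, coercivity on the relevant subspace follows from \eqref{eq:shifted_energy_estimates} together with the fact that $u^*_{\delta,\rm eig}\to u^*_{\rm eig}$). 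Using \textbf{Assumption B.2} and the Sobolev embedding $H^s\hookrightarrow L^\infty$ for $s>d/2$, the potential $\{V_{\delta,\rm eig}\}_{\delta>0}$ is uniformly bounded in $L^\infty(\Omega)$, so $b_\delta$ is continuous with $\delta$-uniform constant; for $\delta$ small enough $b_\delta$ is also coercive on $X$ with a $\delta$-uniform constant $\alpha>0$, since $b_\delta$ is a relatively compact perturbation of $\mathcal{E}''_{\rm eig}[u^*_{\rm eig}]-\lambda^*$ which is coercive by \eqref{eq:shifted_energy_estimates}. The Galerkin-orthogonality input is obtained by subtracting \eqref{eq:Euler-Lagrange_discrete} from \eqref{eq:Euler_Lagrange_continuous} tested against $v_\delta\in X_\delta$: this yields $b_\delta(u^*_{\rm eig}-u^*_{\delta,\rm eig},v_\delta) = (\lambda^*-\lambda^*_\delta)(u^*_{\delta,\rm eig},v_\delta)_{L^2(\Omega)}$, which is \emph{not} exactly zero — this inhomogeneity, governed by \eqref{eq:eigenvalue_convergence}, is the essential difference from the source case.

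The core estimate proceeds exactly as before. Writing $e_\delta := u^*_{\delta,\rm eig}-\Pi^X_\delta u^*_{\rm eig}$, coercivity gives
\[
\alpha\|e_\delta\|_X^2 \le b_\delta(e_\delta,e_\delta) = b_\delta(u^*_{\delta,\rm eig}-u^*_{\rm eig},e_\delta) + b_\delta(u^*_{\rm eig}-\Pi^X_\delta u^*_{\rm eig},e_\delta).
\]
The first term equals $(\lambda^*_\delta-\lambda^*)(u^*_{\delta,\rm eig},e_\delta)_{L^2(\Omega)}$ by the (inhomogeneous) Galerkin relation above, which is bounded by $(\lambda^*_\delta-\lambda^*)\|u^*_{\delta,\rm eig}\|_{L^2}\|e_\delta\|_{L^2}$; by \eqref{eq:eigenvalue_convergence} and \textbf{Assumption B.1} this is $\lsim (\delta^2\|u^*_{\rm eig}-u^*_{\delta,\rm eig}\|_X^2 + \|u^*_{\rm eig}-u^*_{\delta,\rm eig}\|_{L^2})\|e_\delta\|_{L^2}$, so it contributes terms of the right (small) order. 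For the second term, since $\Pi^X_\delta$ is $(\cdot,\cdot)_X$-orthogonal, the gradient part vanishes against $e_\delta$ and only the zeroth-order potential part survives:
\[
b_\delta(u^*_{\rm eig}-\Pi^X_\delta u^*_{\rm eig},e_\delta) = \int_\Omega (V_{\delta,\rm eig}-1)(u^*_{\rm eig}-\Pi^X_\delta u^*_{\rm eig})\,e_\delta\,d{\bf x} \le \|V_{\delta,\rm eig}-1\|_{L^\infty}\|u^*_{\rm eig}-\Pi^X_\delta u^*_{\rm eig}\|_{L^2}\|e_\delta\|_{L^2}.
\]
Combining and using \eqref{eq:assum_eig_1b} to trade one $L^2$-factor for $\delta$ times an $X$-factor yields \eqref{eq:eig_lower_rates} after dividing by $\|e_\delta\|_X$ (and absorbing $\|e_\delta\|_{L^2}\le\|e_\delta\|_X$).

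For the improved estimate \eqref{eq:eig_higher_rates} I would run the Aubin–Nitsche duality argument from Lemma \ref{lem:combine_2}: solve the adjoint problem $b_\delta(v,\vartheta) = (v,e_\delta)_{L^2(\Omega)}$ for all $v\in X$, note $\vartheta\in H^2(\Omega)$ with $\|\vartheta\|_{H^2}\lsim\|e_\delta\|_{L^2}$ by elliptic regularity (using $V_{\delta,\rm eig}\in L^\infty$), and expand $\|e_\delta\|_{L^2}^2 = b_\delta(\vartheta,e_\delta)$ via Galerkin orthogonality and $X$-orthogonality of $\Pi^X_\delta$ into (I)$+$(II) plus a genuinely new term of the form $\int_\Omega(V_{\delta,\rm eig}-1)\,\vartheta\,(u^*_{\rm eig}-\Pi^X_\delta u^*_{\rm eig})\,d{\bf x}$ plus the inhomogeneity $(\lambda^*_\delta-\lambda^*)(\cdots)$. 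Terms (I) and (II) are handled by Hölder, \eqref{eq:assum_eig_1a}, and the $H^2$-bound on $\vartheta$ exactly as in the source proof, producing a $\delta^2\|e_\delta\|_{L^2}^2$ term to be absorbed into the left side. The key new contribution, $\int_\Omega(V_{\delta,\rm eig}-1)\vartheta(u^*_{\rm eig}-\Pi^X_\delta u^*_{\rm eig})$, is precisely where \textbf{Assumption B.3} enters: one writes $V_{\delta,\rm eig}-1 = (V-1) + (u^*_{\delta,\rm eig})^2 - \lambda^*$ and treats the potential-like pieces $(V-1)$ and the bounded $(u^*_{\delta,\rm eig})^2-\lambda^*$ (which one may need to compare with $(u^*_{\rm eig})^2$ — here is where the $L^2$-control $\|u^*_{\rm eig}-u^*_{\delta,\rm eig}\|_{L^2}$ of \textbf{Assumption B.1} and estimate \eqref{eq:assum_4new} are used) via \eqref{eq:assum_4} with $\varphi=\vartheta$ and $\chi=u^*_{\rm eig}$, giving a bound $\lsim\delta^{q^*_{\rm eig}+1}\|\vartheta\|_{H^2}\|u^*_{\rm eig}-\Pi^X_\delta u^*_{\rm eig}\|_X \lsim \delta^{q^*_{\rm eig}+1}\|e_\delta\|_{L^2}\|u^*_{\rm eig}-\Pi^X_\delta u^*_{\rm eig}\|_X$. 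Dividing by $\|e_\delta\|_{L^2}$ gives $\|e_\delta\|_{L^2}\lsim\delta^{q^*_{\rm eig}+1}\|u^*_{\rm eig}-\Pi^X_\delta u^*_{\rm eig}\|_X$, and feeding this back into the $\|\cdot\|_X$-estimate \eqref{eq:30ab}-analogue together with \textbf{Assumption B.1} delivers the $\delta^{q^*_{\rm eig}/2}$-improved $X$-rate. \textbf{The main obstacle} I anticipate is bookkeeping the inhomogeneous Galerkin term: one must verify that $(\lambda^*_\delta-\lambda^*)$, which by \eqref{eq:eigenvalue_convergence} and \textbf{Assumption B.1} is $\lsim \delta^{r_{\rm eig}}(\cdots) + \delta^{r_{\rm eig}}(\cdots)$ hence at least $O(\delta^2)$, never pollutes the final rate — in particular that the term $(\lambda^*_\delta-\lambda^*)(u^*_{\delta,\rm eig},\vartheta-\Pi^X_\delta\vartheta)$ arising in the duality step is small enough, which is exactly why \eqref{eq:assum_4new} (the ``$\widetilde W\equiv 1$'' case applied to the $L^2$-inner product of $u^*_{\rm eig}$ with the Galerkin error) is needed and why one cannot get away with \textbf{Assumption B.3} stated only for the best-approximation error.
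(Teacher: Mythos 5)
Your choice of linearisation, and the coercivity claim that underpins the whole argument, do not hold up. You define $b_\delta$ from the potential $V_{\delta,\rm eig}:=V+(u^*_{\delta,\rm eig})^2-\lambda^*$, i.e.\ $b_\delta(v,w)=\langle(\mathcal{A}_{u^*_{\delta,\rm eig}}-\lambda^*)v,w\rangle$. As $\delta\to 0$ this converges to $\mathcal{A}_{u^*_{\rm eig}}-\lambda^*$, which is \emph{not} coercive on $X$: $\lambda^*$ is precisely the lowest eigenvalue of $\mathcal{A}_{u^*_{\rm eig}}$, so the limiting form annihilates $u^*_{\rm eig}$. Your appeal to \eqref{eq:shifted_energy_estimates} to rescue coercivity confuses $\mathcal{A}_{u^*_{\rm eig}}$ with $\mathcal{E}''_{\rm eig}[u^*_{\rm eig}]$: these differ by the zeroth-order term $2(u^*_{\rm eig})^2$, which is a bounded (not small) perturbation, so $b_\delta$ is not a small perturbation of the coercive operator in \eqref{eq:shifted_energy_estimates}. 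Consequently the core step $\alpha\|e_\delta\|_X^2\le b_\delta(e_\delta,e_\delta)$ and the well-posedness of your adjoint problem posed on all of $X$ are both unjustified. (You gesture at coercivity ``on the relevant subspace'', which would be $X^\perp_{u^*_{\rm eig}}$, but $e_\delta=u^*_{\delta,\rm eig}-\Pi^X_\delta u^*_{\rm eig}$ is not in that subspace, and you do not introduce the decomposition needed to bridge the gap.)

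There is a second, independent error in your Galerkin relation: with $b_\delta$ as defined one gets, for $v_\delta\in X_\delta$,
\[
b_\delta(u^*_{\rm eig}-u^*_{\delta,\rm eig},v_\delta)=(\lambda^*-\lambda^*_\delta)(u^*_{\delta,\rm eig},v_\delta)_{L^2}-\int_\Omega(u^*_{\rm eig}-u^*_{\delta,\rm eig})\,u^*_{\rm eig}(u^*_{\rm eig}+u^*_{\delta,\rm eig})\,v_\delta\,d\mathbf{x},
\]
because $(u^*_{\rm eig})^3-(u^*_{\delta,\rm eig})^3=(u^*_{\rm eig}-u^*_{\delta,\rm eig})\big((u^*_{\rm eig})^2+u^*_{\rm eig}u^*_{\delta,\rm eig}+(u^*_{\delta,\rm eig})^2\big)$, while your linearisation only subtracts $(u^*_{\delta,\rm eig})^2(u^*_{\rm eig}-u^*_{\delta,\rm eig})$. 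The dropped term is first order in the error, so it cannot be absorbed. The paper avoids both difficulties at once by working with the genuinely coercive operator $\mathcal{E}''_{\rm eig}[u^*_{\rm eig}]-\lambda^*$, deriving the exact remainder identity \eqref{eq:discret_eigen} (which keeps the cubic term explicit and then bounds it using \textbf{Assumption B.2}), and running the Aubin--Nitsche argument with an adjoint problem posed on $X^\perp_{u^*_{\rm eig}}$; the component of $e_\delta$ along $u^*_{\rm eig}$ is isolated via the mapping $\mathbb{P}_\delta$ and the scalar $\overline{c}$, and is estimated separately using \eqref{eq:assum_4new}. Your plan correctly identifies that $\lambda^*_\delta-\lambda^*$ and \eqref{eq:assum_4new} must enter, but the specific bilinear form you build around them is the wrong one, and the subspace structure that makes the eigenvalue case genuinely harder than the source case is not actually handled.
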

\begin{proof}
The first portion of the proof involves a straight-forward albeit tedious calculation. For the sake of brevity, we refrain from presenting all of the details but the interested reader can find this calculation presented in detail in \cite[Theorem 5.1.2]{wang2023posteriori} for the specific case of spectral Fourier approximation spaces.

Let $\delta>0$ be fixed. Using the expression for the second derivative $\mathcal{E}_{\rm eig}''[u^*_{\rm eig}] \colon X \rightarrow X^*$ offered by Equation \eqref{eq:energy_derivative}, we can verify that for any $ v_\delta\in X_\delta $ it holds that
\begin{equation}\label{eq:discret_eigen}
\begin{aligned}
 \left \langle \big(\mathcal{E}_{\rm eig}''[u^*_{\rm eig}]-\lambda^*\big)(u^*_{\delta, \rm eig} - u^*_{\rm eig}), v_\delta\right\rangle_{X^* \times X}&=(\lambda^*_\delta-\lambda^*)\int_\Omega u^*_{\delta, \rm eig}(\bold{x})v_\delta(\bold{x}) \; d\bold{x}\\[0.25em]
 &-\int_\Omega \big(u^*_{\delta, \rm eig}(\bold{x})-u^*_{\rm eig}(\bold{x})\big)^2\big(u^*_{\delta, \rm eig}(\bold{x})+2u^*_{\rm eig}(\bold{x})\big)v_\delta(\bold{x})\; d \bold{x}.
\end{aligned}
\end{equation}
Making use of the above Equation \eqref{eq:discret_eigen} and appealing again to the expression for the second derivative $\mathcal{E}_{\rm eig}''[u^*_{\rm eig}]-\lambda^*$ offered by Equation \eqref{eq:energy_derivative} yields that for all $ v_\delta\in X_\delta $ it also holds that
\begin{equation}\label{eq:polish_eigen}
\begin{aligned}
\left \langle \big(\mathcal{E}_{\rm eig}''[u^*_{\rm eig}]-\lambda^*\big)(u^*_{\delta, \rm eig} - \Pi^X_\delta u^*_{\rm eig}), v_\delta\right\rangle_{X^* \times X}&=\left \langle \big(\mathcal{E}_{\rm eig}''[u^*_{\rm eig}]-\lambda^*\big)(u^*_{\delta, \rm eig} - u^*_{\rm eig}), v_\delta \right\rangle_{X^* \times X}\\[0.25em]
&+\left \langle \big(\mathcal{E}_{\rm eig}''[u^*_{\rm eig}]-\lambda^*\big)(u^*_{\rm eig} - \Pi^X_\delta u^*_{\rm eig}), v_\delta\right\rangle_{X^* \times X}\\[0.25em]
&=(\lambda_\delta^*-\lambda^*)\int_\Omega u^*_{\delta, \rm eig}(\bold{x})v_\delta(\bold{x})\; d\bold{x}\\
&-\int_\Omega \big(u^*_{\delta, \rm eig}(\bold{x})-u^*_{\rm eig}(\bold{x})\big)^2\big(u^*_{\delta, \rm eig}(\bold{x})+2u^*_{\rm eig}(\bold{x})\big)v_\delta(\bold{x})\; d \bold{x}\\[0.25em]
&+\int_\Omega \big(V(\bold{x})-\lambda^*+3(u^*_{\rm eig}(\bold{x}))^2\big)\big(u^*_{\rm eig}(\bold{x}) - \Pi^X_\delta u^*_{\rm eig}(\bold{x})\big)v_\delta(\bold{x})\; d \bold{x}.
\end{aligned}
\end{equation}
In particular, setting $ v_\delta= u^*_{\delta, \rm eig} - \Pi^X_\delta u^*_{\rm eig} $ in Equation \eqref{eq:polish_eigen}, introducing the auxiliary potentials
\begin{align}\label{eq:polish_eigen_00}
  V^{(1)}_{\delta, \rm aux}:=  V-\lambda^*+(u^*_{\rm eig})^2 + (u^*_{\delta, \rm eig})^2 + u^*_{\rm eig}u^*_{\delta, \rm eig}, \quad \text{and} \quad V^{(2)}_{\rm aux}:=  V-\lambda^*+3(u^*_{\rm eig})^2,
\end{align}
and performing some simple algebra allows us to deduce that
\begin{equation}\label{eq:polish_eigen_1}
\begin{aligned}
\left \langle \big(\mathcal{E}_{\rm eig}''[u^*_{\rm eig}]-\lambda^*\big)(u^*_{\delta, \rm eig} - \Pi^X_\delta u^*_{\rm eig}), \big(u^*_{\delta, \rm eig} - \Pi^X_\delta u^*_{\rm eig}\big)\right\rangle_{X^* \times X}=(\lambda^*_\delta-\lambda^*)\int_\Omega u^*_{\delta, \rm eig}(\bold{x})\big(u^*_{\delta, \rm eig} - \Pi^X_\delta u^*_{\rm eig}\big)\; d\bold{x}\\[0.5em]
+\int_\Omega V^{(1)}_{\delta, \rm aux}(\bold{x})\big(u^*_{\rm eig}(\bold{x}) -u^*_{\delta, \rm eig}(\bold{x})\big)\big(u^*_{\delta, \rm eig} - \Pi^X_\delta u^*_{\rm eig}\big)\; d \bold{x} +\int_\Omega V^{(2)}_{\rm aux}(\bold{x})\big(u^*_{\delta, \rm eig}(\bold{x}) - \Pi^X_\delta u^*_{\rm eig}(\bold{x})\big)^2\; d \bold{x}.
\end{aligned}
\end{equation}
Appealing now to the eigenvalue bound \eqref{eq:eigenvalue_convergence}, and recalling the normalisation $\Vert u^*_{\delta, \rm eig}\Vert_{L^{2}(\Omega)}=1$ allows us to deduce that
\begin{equation}\label{eq:polish_eigen_1b}
\begin{aligned}
\Big \langle \big(\mathcal{E}_{\rm eig}''[u^*_{\rm eig}]-\lambda^*\big)(u^*_{\delta, \rm eig} - \Pi^X_\delta u^*_{\rm eig}), &\big(u^*_{\delta, \rm eig} - \Pi^X_\delta u^*_{\rm eig}\big)\Big\rangle_{X^* \times X} \\[0.75em]
\lsim  &\big(\Vert u^*_{\delta, \rm eig}-u^*_{\rm eig}\Vert_{X}^2+\Vert u^*_{\delta, \rm eig}-u^*_{\rm eig}\Vert_{L^2(\Omega)}\big)\Vert u^*_{\delta, \rm eig} - \Pi^X_\delta u^*_{\rm eig} \Vert_{L^2(\Omega)}\\[0.25em]
+&\Vert V^{(1)}_{\delta, \rm aux}\Vert_{L^{\infty}(\Omega)} \Vert u^*_{\rm eig}-u^*_{\delta, \rm eig}\Vert_{L^{2}(\Omega)} \Vert u^*_{\delta, \rm eig} - \Pi^X_\delta u^*_{\rm eig}\Vert_{L^{2}(\Omega)}\\[0.25em]
+& \Vert V^{(2)}_{\rm aux}\Vert_{L^{\infty}(\Omega)} \Vert u^*_{\delta, \rm eig} - \Pi^X_\delta u^*_{\rm eig}\Vert^2_{L^{2}(\Omega)}. 
\end{aligned}
\end{equation}

Notice now that the effective potential $V \in H^{r_v}(\Omega)$ for some $r_v > d/2$ by hypothesis and the exact constrained minimiser $u^*_{\rm eig}\in H^2(\Omega)$ (recall the elliptic regularity estimate \eqref{eq:elliptic_regularity}). Additionally, it follows from \textbf{Assumption B.2} that the discrete constrained minimisers $\{u^*_{\rm eig, \delta}\}_{\delta >0}$ are uniformly bounded in $L^{\infty}(\Omega)$. Appealing therefore to the Sobolev embedding $H^s(\Omega)  \hookrightarrow L^{\infty}(\Omega)$ for any $s > d/2$, we deduce that the auxiliary potentials $\{V^{(1)}_{\delta, \rm aux}\}_{\delta >0}$ and $V^{(2)}_{\rm aux}$ are also uniformly bounded in $L^{\infty}(\Omega)$. Inequality \eqref{eq:polish_eigen_1b} therefore simplifies to the bound
\begin{equation}\label{eq:polish_eigen_2}
\begin{aligned}
&\Big \langle \big(\mathcal{E}_{\rm eig}''[u^*_{\rm eig}]-\lambda^*\big)(u^*_{\delta, \rm eig} - \Pi^X_\delta u^*_{\rm eig}), \big(u^*_{\delta, \rm eig} - \Pi^X_\delta u^*_{\rm eig}\big)\Big\rangle_{X^* \times X} \\[0.75em]
\lsim  &\big(\Vert u^*_{\delta, \rm eig}-u^*_{\rm eig}\Vert_{X}^2+\Vert u^*_{\delta, \rm eig}-u^*_{\rm eig}\Vert_{L^2(\Omega)}\big)\Vert u^*_{\delta, \rm eig} - \Pi^X_\delta u^*_{\rm eig} \Vert_{L^2(\Omega)}+ \Vert u^*_{\delta, \rm eig} - \Pi^X_\delta u^*_{\rm eig}\Vert^2_{L^{2}(\Omega)}. 
\end{aligned}
\end{equation}

Combining now the above Inequality \eqref{eq:polish_eigen_2} with the coercivity property \eqref{eq:shifted_energy_estimates} of the shifted second derivative of the energy $\mathcal{E}_{\rm eig}$, we obtain that
 \begin{align}\nonumber
 \Vert u^*_{\delta, \rm eig} - \Pi^X_\delta u^*_{\rm eig} \Vert^2_{X} \lsim  &\big(\Vert u^*_{\delta, \rm eig}-u^*_{\rm eig}\Vert_{X}^2+\Vert u^*_{\delta, \rm eig}-u^*_{\rm eig}\Vert_{L^2(\Omega)}\big)\Vert u^*_{\delta, \rm eig} - \Pi^X_\delta u^*_{\rm eig} \Vert_{L^2(\Omega)}\\ \nonumber
 +& \Vert u^*_{\delta, \rm eig} - \Pi^X_\delta u^*_{\rm eig}\Vert^2_{L^{2}(\Omega)}\\[0.5em]  \label{est:polish_1}
 \lsim  &\big(\Vert u^*_{\delta, \rm eig}-u^*_{\rm eig}\Vert_{X}^2+\Vert \Pi^X_\delta u^*_{\rm eig}- u^*_{\rm eig}\Vert_{L^2(\Omega)}\big)\Vert u^*_{\delta, \rm eig} - \Pi^X_\delta u^*_{\rm eig} \Vert_{L^2(\Omega)}\\
 +& \Vert u^*_{\delta, \rm eig} - \Pi^X_\delta u^*_{\rm eig}\Vert^2_{L^{2}(\Omega)},\nonumber
 \end{align}
where the last step follows from a simple application of the triangle inequality. Making use of the convergence rates \eqref{eq:assum_eig_1a} and \eqref{eq:assum_eig_1b} postulated in \textbf{Assumption B.1} together with another  judicious application of the triangle inequality now yields
\begin{align}\nonumber
\Vert u^*_{\delta, \rm eig} - \Pi^X_\delta u^*_{\rm eig} \Vert^2_{X} &\lsim  \big(\delta\Vert u^*_{\rm eig}\Vert_{H^2(\Omega)}\Vert u^*_{\delta, \rm eig}-u^*_{\rm eig}\Vert_{X}+\delta\Vert \Pi^X_\delta u^*_{\rm eig}- u^*_{\rm eig}\Vert_{X}\big)\delta\Vert u^*_{\delta, \rm eig} -  u^*_{\rm eig} \Vert_{X}\\ \nonumber
&+  \big(\delta\Vert u^*_{\rm eig}\Vert_{H^2(\Omega)}\Vert u^*_{\delta, \rm eig}-u^*_{\rm eig}\Vert_{X}+\delta\Vert \Pi^X_\delta u^*_{\rm eig}- u^*_{\rm eig}\Vert_{X}\big)\delta\Vert u^*_{\rm eig} - \Pi^X_\delta u^*_{\rm eig} \Vert_{X}\\ 
 &+ \delta^2\big(\Vert u^*_{\delta, \rm eig} -  u^*_{\rm eig} \Vert_{X}+\Vert u^*_{\rm eig} - \Pi^X_\delta u^*_{\rm eig} \Vert_{X}\big)^2. \label{eq:split_lemma_eig_1}
 \end{align}

Notice that the convergence rates \eqref{eq:assum_eig_1a} stated in \textbf{Assumption B.1} further implIES that
\begin{align}\label{eq:split_lemma_eig_2}
    \Vert u^*_{\delta, \rm eig} -  u^*_{\rm eig} \Vert_{X} \lsim \underset{v_\delta \in X_\delta}{\inf} \Vert u^*_{\rm eig} - v_\delta \Vert_X \leq \Vert u^*_{\rm eig} - \Pi^X_\delta u^*_{\rm eig} \Vert_{X},
\end{align}
Consequently, combining Estimates \eqref{eq:split_lemma_eig_1} and \eqref{eq:split_lemma_eig_2} and using the fact that $u^*_{\rm eig} \in H^2(\Omega)$ (recall the elliptic regularity result \eqref{eq:elliptic_regularity}) yields the sought-after convergence estimate \eqref{eq:eig_lower_rates}.

To prove the improved convergence estimate \eqref{eq:eig_higher_rates}, we return to Inequality \eqref{est:polish_1}. Our goal now is to derive a sharper upper bound for the term $ \Vert u^*_{\delta, \rm eig} - \Pi^X_\delta u^*_{\rm eig} \Vert_{L^2(\Omega)} $. Similar to the proof of Lemma \ref{lem:combine_2} in Section \ref{sec:source}, we will do so by introducing an adjoint source problem. Unlike the proof of Lemma \ref{lem:combine_2} however, this adjoint problem will be posed in a \emph{subspace} rather than the entire approximation space $X$. 
 
 To this end, we first introduce $\Pi_{u^*_{\rm eig}} \colon L^2(\Omega)  \rightarrow \text{span} \{u^*_{\rm eig}\}$ as the $L^2(\Omega)$-orthogonal projector onto span$\{u^*_{\rm eig}\}$, and we denote by $\Pi_{u^*_{\rm eig}}^{\perp}:= I - \Pi_{u^*_{\rm eig}}$, its complement. We then introduce $X_{u^*_{\rm eig}}^{\perp} := X \cap \text{Ran} \Pi_{u^*_{\rm eig}}^{\perp}$ as the subspace of $X$ consisting of all functions $L^2(\Omega)$-orthogonal to~$u^*_{\rm eig}$. The adjoint source problem that we consider now consists of seeking $\vartheta \in X_{u^*_{\rm eig}}^{\perp}$ that solves the equation 
 \begin{align}\label{eq:adjoint_eigen}
\forall v \in X_{u^*_{\rm eig}}^{\perp}\colon \quad \left\langle \big(\mathcal{E}_{\rm eig}''[u^*_{\rm eig}] -\lambda^* \big)v, \vartheta\right\rangle_{X^* \times X}= \left(u^*_{\delta, \rm eig} -\Pi^X_\delta u^*_{\rm eig}, v\right)_{L^2(\Omega)}.     
 \end{align}
 Since $\mathcal{E}_{\rm eig}''[u^*_{\rm eig}]  \colon X \rightarrow X^*$ is self-adjoint, and we have the coercivity and continuity properties~\eqref{eq:shifted_energy_estimates}, we immediately deduce that the adjoint problem \eqref{eq:adjoint_eigen} has a unique solution $\vartheta \in X_{u^*_{\rm eig}}^{\perp}$. Additionally, in view of the definition of $\mathcal{E}_{\rm eig}''[u^*_{\rm eig}]$ given by Equation \eqref{eq:energy_derivative}, we can deduce from elliptic regularity theory and a bootstrapping argument that $\vartheta \in H^2(\Omega)$ and satisfies the estimate
 \begin{align}\label{eq:adjoint_eigen_estimate}
     \Vert \vartheta \Vert_{H^2(\Omega)} \lsim \Vert u^*_{\delta, \rm eig} -\Pi^X_\delta u^*_{\rm eig}\Vert_{L^2(\Omega)}.
 \end{align}

As usual, the trick is now to plug in a judicious test function $v \in X_{u^*_{\rm eig}}^{\perp}$ in the adjoint equation~\eqref{eq:adjoint_eigen}. Unlike the adjoint problem considered in the proof of Lemma \ref{lem:combine_2} in Section \ref{sec:source} however, the correct choice of test function in the present setting is a bit subtle, and we require some additional tools. To begin with, we introduce the affine space $\mathcal{S}_{\delta} \subset X$ as
\begin{align*}
    \mathcal{S}_{\delta}:= \Pi^X_{\delta} u^*_{\rm eig} + X_{u^*_{\rm eig}}^{\perp}.
\end{align*}
 Next, we introduce the mapping $\mathbb{P}_{\delta}\colon X \rightarrow \mathcal{S}_{\delta}$ onto this affine space~$\mathcal{S}_{\delta}$ as
\begin{align*}
    \forall v \in   X \colon \qquad \mathbb{P}_{\delta} v :=  \Pi^X_\delta u^*_{\rm eig} + \Pi_{u^*_{\rm eig}}^\perp \big(v -\Pi^X_\delta u^*_{\rm eig}\big).
\end{align*}

 Several properties of the mapping $\mathbb{P}_{\delta}$ can now be deduced. To begin with, we note that
 \begin{align*}
    \big(\mathbb{P}_{\delta}u^*_{\delta, \rm eig} - \Pi^X_\delta u^*_{\rm eig}, u^*_{\rm eig}\big)_{L^2(\Omega)}=0 \quad \text{i.e.,} \quad \mathbb{P}_{\delta}u^*_{\delta, \rm eig} - \Pi^X_\delta u^*_{\rm eig} \in X^{\perp}_{u^*_{\rm eig}}.
\end{align*}
Additionally, a direct calculation reveals that
 \begin{align}\nonumber
     \mathbb{P}_{\delta}u^*_{\delta, \rm eig}-  u^*_{\delta, \rm eig}&= \Pi^X_\delta u^*_{\rm eig} + \Pi_{u^*_{\rm eig}}^\perp \big(u^*_{\delta, \rm eig} -\Pi^X_\delta u^*_{\rm eig}\big)-  u^*_{\delta, \rm eig}\\  \label{eq:new_proof_01}
     &=\Pi^X_\delta u^*_{\rm eig} -\Pi_{u^*_{\rm eig}}^\perp \Pi^X_\delta u^*_{\rm eig} +\Pi_{u^*_{\rm eig}}^\perp u^*_{\delta, \rm eig} -u^*_{\delta, \rm eig}\\ \nonumber
     &=\Pi_{u^*_{\rm eig}}\Pi^X_\delta u^*_{\rm eig} - \Pi_{u^*_{\rm eig}} u^*_{\delta, \rm eig}.
 \end{align}
 Equation \eqref{eq:new_proof_01} implies in particular that
 \begin{align}\label{eq:new_proof_2}
 \mathbb{P}_{\delta}u^*_{\delta, \rm eig}-u^*_{\delta, \rm eig} = \overline{c}\; u^*_{\rm eig} \quad \text{with} \quad
 \overline{c}= \big(\Pi_\delta^X u^*_{\rm eig} -u^*_{\delta, \rm eig}, u^*_{\rm eig}\big)_{L^2(\Omega)}.
 \end{align}
Using the above Equation \eqref{eq:new_proof_2}, we can now deduce that
\begin{align}\nonumber
     \Vert u^*_{\delta, \rm eig} - \Pi^X_\delta u^*_{\rm eig} \Vert^2_{L^2(\Omega)}&= \big(u^*_{\delta, \rm eig} - \Pi^X_\delta u^*_{\rm eig}, \mathbb{P}_{\delta}u^*_{\delta, \rm eig} - \Pi^X_\delta u^*_{\rm eig}\big)_{L^2(\Omega)}\\ \nonumber
     &+ \big(u^*_{\delta, \rm eig} - \Pi^X_\delta u^*_{\rm eig}, u^*_{\delta, \rm eig}-\mathbb{P}_{\delta}u^*_{\delta, \rm eig} \big)_{L^2(\Omega)}\\ \nonumber
     &=\big(u^*_{\delta, \rm eig} - \Pi^X_\delta u^*_{\rm eig}, \mathbb{P}_{\delta}u^*_{\delta, \rm eig} - \Pi^X_\delta u^*_{\rm eig}\big)_{L^2(\Omega)}- \overline{c}\big(u^*_{\delta, \rm eig} - \Pi^X_\delta u^*_{\rm eig}, u^*_{\rm eig} \big)_{L^2(\Omega)}\\ \label{eq:revision_hassan}
     &=\big(u^*_{\delta, \rm eig} - \Pi^X_\delta u^*_{\rm eig}, \mathbb{P}_{\delta}u^*_{\delta, \rm eig} - \Pi^X_\delta u^*_{\rm eig}\big)_{L^2(\Omega)}+ \overline{c}^2.
\end{align}
Since, moreover, $\mathbb{P}_{\delta}u^*_{\delta, \rm eig} - \Pi^X_\delta u^*_{\rm eig} \in X^{\perp}_{u^*_{\rm eig}}$, we can plug in the test function $v= \mathbb{P}_{\delta}u^*_{\delta, \rm eig} - \Pi^X_\delta u^*_{\rm eig}$ in the adjoint problem \eqref{eq:adjoint_eigen}, and obtain from Equation \eqref{eq:revision_hassan} that
\begin{align*}
     \Vert u^*_{\delta, \rm eig} - \Pi^X_\delta u^*_{\rm eig} \Vert^2_{L^2(\Omega)}&= \left\langle \big(\mathcal{E}_{\rm eig}''[u^*_{\rm eig}] -\lambda^* \big)\big(\mathbb{P}_{\delta}u^*_{\delta, \rm eig} - \Pi^X_\delta u^*_{\rm eig}\big), \vartheta\right\rangle_{X^* \times X} + \overline{c}^2.
\end{align*}
Making use once again of Equation \eqref{eq:new_proof_2}, we can further deduce that
\begin{align*}
     \Vert u^*_{\delta, \rm eig} - \Pi^X_\delta u^*_{\rm eig} \Vert^2_{L^2(\Omega)}&= \left\langle \big(\mathcal{E}_{\rm eig}''[u^*_{\rm eig}] -\lambda^* \big)\big(u^*_{\delta, \rm eig} - \Pi^X_\delta u^*_{\rm eig}\big), \vartheta\right\rangle_{X^* \times X}\\
     &+\left\langle \big(\mathcal{E}_{\rm eig}''[u^*_{\rm eig}] -\lambda^* \big)\big(\mathbb{P}_{\delta}u^*_{\delta, \rm eig} - u^*_{\delta, \rm eig}\big), \vartheta\right\rangle_{X^* \times X}+ \overline{c}^2\\[0.5em]
     &= \left\langle \big(\mathcal{E}_{\rm eig}''[u^*_{\rm eig}] -\lambda^* \big)\big(u^*_{\delta, \rm eig} - \Pi^X_\delta u^*_{\rm eig}\big), \vartheta\right\rangle_{X^* \times X}\\
     &+\overline{c}\left\langle \big(\mathcal{E}_{\rm eig}''[u^*_{\rm eig}] -\lambda^* \big) u^*_{\rm eig}, \vartheta\right\rangle_{X^* \times X}+ \overline{c}^2.
\end{align*}
Recalling now the definition of the second derivative $\mathcal{E}_{\rm eig}''[u^*_{\rm eig}]$ of the energy functional given by Equation \eqref{eq:energy_derivative} and noting that $\vartheta \in X_{u^*_{\rm eig}}^{\perp}$, we finally obtain
\begin{align}
     \Vert u^*_{\delta, \rm eig} - \Pi^X_\delta u^*_{\rm eig} \Vert^2_{L^2(\Omega)}&= \left\langle \big(\mathcal{E}_{\rm eig}''[u^*_{\rm eig}] -\lambda^* \big)\big(u^*_{\delta, \rm eig} - \Pi^X_\delta u^*_{\rm eig}\big), \vartheta\right\rangle_{X^* \times X}+2\overline{c}\int_{\Omega}(u^*_{\rm eig}(\bold{x}))^3\vartheta(\bold{x})\; d\bold{x} + \overline{c}^2.\label{eq:new_proof_3}
\end{align}

By inserting the projection operator $\Pi_\delta^{X} \colon X \rightarrow X_\delta$ and making use of the earlier Equation \eqref{eq:polish_eigen}, we can rewrite Equation \eqref{eq:new_proof_3} as
\begin{align}\nonumber
     \Vert u^*_{\delta, \rm eig} - \Pi^X_\delta u^*_{\rm eig} \Vert^2_{L^2(\Omega)}&= \left\langle \big(\mathcal{E}_{\rm eig}''[u^*_{\rm eig}] -\lambda^* \big)\big(u^*_{\delta, \rm eig} - \Pi^X_\delta u^*_{\rm eig}\big), \vartheta- \Pi_\delta^{X} \vartheta\right\rangle_{X^* \times X}+2\overline{c}\int_{\Omega}(u^*_{\rm eig}(\bold{x}))^3\vartheta(\bold{x})\; d\bold{x}+ \overline{c}^2\\[0.25em] \nonumber
     &+\left\langle \big(\mathcal{E}_{\rm eig}''[u^*_{\rm eig}] -\lambda^*\big)\big(u^*_{\delta, \rm eig} - \Pi^X_\delta u^*_{\rm eig}\big),\Pi_\delta^{X} \vartheta\right\rangle_{X^* \times X}\\[1em] \nonumber
     &= \underbrace{\left\langle \big(\mathcal{E}_{\rm eig}''[u^*_{\rm eig}] -\lambda^* \big)\big(u^*_{\delta, \rm eig} - \Pi^X_\delta u^*_{\rm eig}\big), \vartheta- \Pi_\delta^{X} \vartheta\right\rangle_{X^* \times X}}_{\rm :=(I)}\\ \label{eq:eig_main_L2}
&+\underbrace{2\overline{c}\int_{\Omega}(u^*_{\rm eig}(\bold{x}))^3\vartheta(\bold{x})\; d\bold{x}+\overline{c}^2}_{\rm := (II)}+\underbrace{(\lambda^*_\delta-\lambda^*)\int_\Omega u^*_{\delta, \rm eig}(\bold{x})\Pi_\delta^{X} \vartheta(\bold{x})\; d\bold{x}}_{:= \rm (III)}\\ \nonumber
&\underbrace{-\int_\Omega \big(u^*_{\delta, \rm eig}(\bold{x})-u^*_{\rm eig}(\bold{x})\big)^2\big(u^*_{\delta, \rm eig}(\bold{x})+2u^*_{\rm eig}(\bold{x})\big)\Pi_\delta^{X} \vartheta(\bold{x})\; d \bold{x}}_{\rm := (IV)}\\ \nonumber
&+\underbrace{\int_\Omega V^{(2)}_{\rm eig}(\bold{x})\big(u^*_{\rm eig}(\bold{x}) - \Pi^X_\delta u^*_{\rm eig}(\bold{x})\big)\Pi_\delta^{X} \vartheta(\bold{x})\; d \bold{x}}_{\rm := (V)}.
\end{align}

We now proceed term-by-term beginning with (I). This first term can be estimated by making use of the continuity property \eqref{eq:shifted_energy_estimates} of the shifted second derivative $\mathcal{E}_{\rm eig}''-\lambda^*$ together with \textbf{Assumption B.1} and Estimate \eqref{eq:adjoint_eigen_estimate} to yield that
\begin{align}\nonumber
   {\rm (I)} \lsim \Vert u^*_{\delta, \rm eig} - \Pi^X_\delta u^*_{\rm eig}\Vert_X \Vert \vartheta- \Pi_\delta^{X} \vartheta\Vert_X &\lsim \delta\Vert u^*_{\delta, \rm eig} - \Pi^X_\delta u^*_{\rm eig}\Vert_X \Vert \vartheta\Vert_{H^2(\Omega)}\\ \label{eq:new_proof_estimate_1}
   &\lsim \delta\Vert u^*_{\delta, \rm eig} - \Pi^X_\delta u^*_{\rm eig}\Vert_X \Vert  u^*_{\delta, \rm eig} - \Pi^X_\delta u^*_{\rm eig}\Vert_{L^2(\Omega)}.
\end{align}

In order to estimate the second term, we make use of the Cauchy-Schwarz inequality, the Sobolev embedding $H^1(\Omega) \hookrightarrow L^6(\Omega)$ and Estimate \eqref{eq:adjoint_eigen_estimate} to first deduce that
\begin{align}\label{eq:new_proof_estimate_2_aux}
    {\rm (II)} \lsim \overline{c}\Vert u^*_{\rm eig}\Vert^3_{L^6(\Omega)}\Vert \vartheta\Vert_{L^2(\Omega)}+ \overline{c}^2 \lsim  \overline{c}\Vert u^*_{\delta, \rm eig} - \Pi^X_\delta u^*_{\rm eig}\Vert_{L^2(\Omega)}+ \overline{c}^2.
\end{align}
It therefore remains to simplify the scalar $\overline{c}$. To this end, we make use of the expression for this constant given by Equation \eqref{eq:new_proof_2} to deduce that
\begin{align}\label{eq:revision_hassan_0}
    \overline{c}=   \big(\Pi_\delta^X u^*_{\rm eig} -u^*_{\delta, \rm eig}, u^*_{\rm eig}\big)_{L^2(\Omega)}= \big(\Pi_\delta^X u^*_{\rm eig} -u^*_{\rm eig}, u^*_{\rm eig}\big)_{L^2(\Omega)} + \big( u^*_{\rm eig} -u^*_{\delta, \rm eig}, u^*_{\rm eig}\big)_{L^2(\Omega)}.
\end{align}
In order to simplify Inequality \eqref{eq:revision_hassan_0}, we now appeal to Estimate \eqref{eq:assum_4} with $\widetilde W =1$ and Estimate \eqref{eq:assum_4new} in \textbf{Assumption B.3} to obtain the existence of a real number $q^*_{\rm eig} \in (0, 2]$ such that for all $\delta$ sufficiently small it holds that
\begin{align}\label{eq:new_proof_estimate_2_aux_2}
    \overline{c} \lsim \delta^{q^*_{\rm eig} +1} \Vert u^*_{\rm eig}\Vert_{H^2(\Omega)} \Vert u^*_{\rm eig}- \Pi_\delta^X u^*_{\rm eig}\Vert_X.
\end{align}

Since we additionally possess the trivial bound $ \overline{c}  \leq \Vert u^*_{\delta, \rm eig} - \Pi^X_\delta u^*_{\rm eig}\Vert_{L^2(\Omega)}$ (cf. Equation \eqref{eq:new_proof_2}), Estimates \eqref{eq:new_proof_estimate_2_aux} and \eqref{eq:new_proof_estimate_2_aux_2} together yield that
\begin{align}\label{eq:new_proof_estimate_2}
    {\rm (II)} &\lsim \delta^{q^*_{\rm eig} +1} \Vert u^*_{\rm eig}- \Pi_\delta^X u^*_{\rm eig}\Vert_X\Vert u^*_{\delta, \rm eig} - \Pi^X_\delta u^*_{\rm eig}\Vert_{L^2(\Omega)}.
\end{align}
Here, we have used, as usual, the fact that $u^*_{\rm eig} \in H^2(\Omega)$ (recall the elliptic regularity result \eqref{eq:elliptic_regularity}).

We next turn to the term (III) appearing in Equation \eqref{eq:eig_main_L2}. Let us first observe that since the solution $\vartheta$ to the adjoint problem \eqref{eq:adjoint_eigen} is an element of $X^{\perp}_{u^*}$ by construction, we can write
\begin{align*}
   {\rm (III)}=(\lambda_\delta^*-\lambda^*)\int_\Omega u^*_{\delta, \rm eig}(\bold{x})\Pi_\delta^{X} \vartheta(\bold{x})\; d\bold{x}&= (\lambda_\delta^*-\lambda^*)\int_\Omega \big(u^*_{\delta, \rm eig}(\bold{x})-u^*_{\rm eig}(\bold{x})\big)\Pi_\delta^{X} \vartheta(\bold{x})\; d\bold{x}\\
    &+ (\lambda^*_\delta-\lambda^*)\int_\Omega u^*_{\rm eig}(\bold{x})\Pi_\delta^{X} \vartheta(\bold{x})\; d\bold{x}\\
    &=(\lambda^*_\delta-\lambda^*)\int_\Omega \big(u^*_{\delta, \rm eig}(\bold{x})-u^*_{\rm eig}(\bold{x})\big)\Pi_\delta^{X} \vartheta(\bold{x})\; d\bold{x}\\
    &+ (\lambda^*_\delta-\lambda^*)\int_\Omega u^*_{\rm eig}(\bold{x})\big(\Pi_\delta^{X} \vartheta(\bold{x})-\vartheta(\bold{x})\big)\; d\bold{x}.
\end{align*}
Notice now that \textbf{Assumption B.1} implies that
\begin{align*}
\Vert u^*_{\delta, \rm eig}-u^*_{\rm eig}\Vert_{L^2(\Omega)} \lsim \delta^{2}\Vert u^*_{\rm eig}\Vert_{H^2(\Omega)} \quad \text{and} \quad \Vert \Pi_\delta^{X} \vartheta-\vartheta\Vert_{L^2(\Omega)} \lsim \delta^{2}\Vert \vartheta\Vert_{H^2(\Omega)}.
\end{align*}
Consequently, recalling the eigenvalue bound \eqref{eq:eigenvalue_convergence}, applying the Cauchy-Schwarz inequality, and making use once again of Estimate \eqref{eq:adjoint_eigen_estimate}, we obtain that
\begin{align}\nonumber
   {\rm (III)}&\lsim \delta^{2}\big(\Vert u^*_{\delta, \rm eig}-u^*_{\rm eig}\Vert_{X}^2+\Vert u^*_{\delta, \rm eig}-u^*_{\rm eig}\Vert_{L^2(\Omega)}\big)\left(\Vert u^*_{\rm eig}\Vert_{H^2(\Omega)}\Vert \Pi_\delta^X \vartheta\Vert_{L^2(\Omega)} + \Vert u^*_{\rm eig}\Vert_{L^2(\Omega)}\Vert \vartheta \Vert_{H^2(\Omega)}\right)\\[0.25em] \nonumber
   &\lsim \delta^{2}\big(\Vert u^*_{\delta, \rm eig}-u^*_{\rm eig}\Vert_{X}^2+\Vert u^*_{\delta, \rm eig}-u^*_{\rm eig}\Vert_{L^2(\Omega)}\big)\left(\Vert \vartheta\Vert_{H^1(\Omega)} + \Vert \vartheta \Vert_{H^2(\Omega)}\right)\\[0.25em]
   &\lsim \delta^{2}\big(\Vert u^*_{\delta, \rm eig}-u^*_{\rm eig}\Vert_{X}^2+\Vert u^*_{\delta, \rm eig}-u^*_{\rm eig}\Vert_{L^2(\Omega)}\big)\Vert u^*_{\delta, \rm eig} - \Pi^X_\delta u^*_{\rm eig}\Vert_{L^2(\Omega)}.  \label{eq:new_proof_estimate_3}
\end{align}

\medskip

In order to estimate the term (IV) appearing in Equation \eqref{eq:eig_main_L2}, we notice that we can write
\begin{align*}
    {\rm (IV)} &= \underbrace{-\int_\Omega \big(u^*_{\delta, \rm eig}(\bold{x})-u^*_{\rm eig}(\bold{x})\big)^2\big(u^*_{\delta, \rm eig}(\bold{x})+2u^*_{\rm eig}(\bold{x})\big)\vartheta(\bold{x})\; d \bold{x}}_{:= \rm (IVA)} \\
    &+\underbrace{\int_\Omega \big(u^*_{\delta, \rm eig}(\bold{x})-u^*_{\rm eig}(\bold{x})\big)^2\big(u^*_{\delta, \rm eig}(\bold{x})+2u^*_{\rm eig}(\bold{x})\big)\big(\vartheta-\Pi_\delta^{X} \vartheta\big)(\bold{x})\; d \bold{x}}_{:= \rm (IVB)}.
\end{align*}
Turning first to the term (IVA), we use Hölder's inequality to deduce that
\begin{align*}
    {\rm (IVA)} \leq \Vert u^*_{\delta, \rm eig}-u^*_{\rm eig}\Vert_{L^2(\Omega)}^2 \Vert u^*_{\delta, \rm eig}+2u^*_{\rm eig}\Vert_{L^\infty(\Omega)}\Vert \vartheta\Vert_{L^{\infty}(\Omega)} &\lsim \Vert u^*_{\delta, \rm eig}-u^*_{\rm eig}\Vert_{L^2(\Omega)}^2 \Vert \vartheta\Vert_{H^{2}(\Omega)},
\end{align*}
where the second step follows from the uniform $L^{\infty}(\Omega)$-boundedness of the discrete minimisers $\{u^*_{\delta, \rm eig}\}_{\delta >0}$ yielded by \textbf{Assumption B.2} and the fact that $u^*_{\rm eig}$ is an element of $H^2(\Omega)$ (recall the regularity result \eqref{eq:elliptic_regularity}) and hence also of $L^\infty(\Omega)$. Using now the adjoint estimate \eqref{eq:adjoint_eigen_estimate} together with the convergence rates stated in \textbf{Assumption B.1}, we deduce that
\begin{align}\label{eq:new_proof_estimate_4a}
    {\rm (IVA)}\lsim \Vert u^*_{\delta, \rm eig}-u^*_{\rm eig}\Vert_{L^2(\Omega)}^2 \Vert \vartheta\Vert_{H^{2}(\Omega)} \lsim \delta^2 \Vert u^*_{\delta, \rm eig}-u^*_{\rm eig}\Vert_{L^2(\Omega)}\Vert u^*_{\delta, \rm eig} - \Pi^X_\delta u^*_{\rm eig}\Vert_{L^2(\Omega)}.
\end{align}

In order to estimate the term (IVB), we use the Cauchy-Schwarz inequality together with the uniform $L^{\infty}(\Omega)$-boundedness implied by \textbf{Assumption B.2} to deduce that 
\begin{align*}\nonumber
    {\rm (IVB)} &\leq \Vert u^*_{\delta, \rm eig}-u^*_{\rm eig}\Vert_{L^2(\Omega)} \big \Vert \big(u^*_{\delta, \rm eig}-u^*_{\rm eig}\big)\big(u^*_{\delta, \rm eig}+2u^*_{\rm eig}\big)\big(\vartheta-\Pi_\delta^{X} \vartheta\big)\big\Vert_{L^2(\Omega)}\\ \nonumber
    &\leq \Vert u^*_{\delta, \rm eig}-u^*_{\rm eig}\Vert_{L^2(\Omega)} \big\Vert \big(u^*_{\delta, \rm eig}-u^*_{\rm eig}\big)\big(u^*_{\delta, \rm eig}+2u^*_{\rm eig}\big)\big \Vert_{L^{\infty}(\Omega)} \Vert \vartheta-\Pi_\delta^{X} \vartheta\Vert_{L^2(\Omega)}\\ 
    &\lsim \Vert u^*_{\delta, \rm eig}-u^*_{\rm eig}\Vert_{L^2(\Omega)} \Vert \vartheta-\Pi_\delta^{X} \vartheta\Vert_{L^2(\Omega)}.
\end{align*}
Appealing again to the convergence rates yielded by \textbf{Assumption B.1} and the adjoint estimate~\eqref{eq:adjoint_eigen_estimate}, we obtain that
\begin{align}\label{eq:new_proof_estimate_4b}
    {\rm (IVB)} \lsim \Vert u^*_{\delta, \rm eig}-u^*_{\rm eig}\Vert_{L^2(\Omega)} \Vert \vartheta-\Pi_\delta^{X} \vartheta\Vert_{L^2(\Omega)} \lsim \delta^2\Vert u^*_{\delta, \rm eig}-u^*_{\rm eig}\Vert_{L^2(\Omega)}\Vert u^*_{\delta, \rm eig} - \Pi^X_\delta u^*_{\rm eig}\Vert_{L^2(\Omega)}.
\end{align}

Combining Estimates \eqref{eq:new_proof_estimate_4a} and \eqref{eq:new_proof_estimate_4b}, we deduce that
\begin{align}\label{eq:new_proof_estimate_4}
    {\rm (IV)}\lsim \delta^2 \Vert u^*_{\delta, \rm eig}-u^*_{\rm eig}\Vert_{L^2(\Omega)}\Vert u^*_{\delta, \rm eig} - \Pi^X_\delta u^*_{\rm eig}\Vert_{L^2(\Omega)}.
\end{align}

We finally turn our attention to the term (V) appearing in Equation \eqref{eq:eig_main_L2}. Following the same strategy used to bound the term (IV), we first note that we can write
\begin{align*}
    {\rm (V)}= \underbrace{\int_\Omega V^{(2)}_{\rm eig}(\bold{x})\big(u^*_{\rm eig}(\bold{x}) - \Pi^X_\delta u^*_{\rm eig}(\bold{x})\big)\vartheta(\bold{x})\; d \bold{x}}_{:= \rm (VA)}+ \underbrace{\int_\Omega V^{(2)}_{\rm eig}(\bold{x})\big(u^*_{\rm eig}(\bold{x}) - \Pi^X_\delta u^*_{\rm eig}(\bold{x})\big)\big(\Pi_\delta^X\vartheta-\vartheta\big)\; d \bold{x}}_{:= \rm (VB)}.
\end{align*}
To bound the first term (VA) on the right hand-side, we simply make use of \textbf{Assumption B.3} with generic potential $\widetilde{W}=  V^{(2)}_{\rm eig}$, which yields the existence of a real number $q^*_{\rm eig} \in (0, 2]$ such that for all $\delta>0$ sufficiently small it holds that
\begin{align}\label{eq:new_proof_estimate_5a}
    {\rm (VA)} \lsim  \delta^{q^*_{\rm eig}+1} \Vert  u^*_{\rm eig}- \Pi^X_\delta u^*_{\rm eig}\Vert_{X}\Vert \vartheta \Vert_{H^2(\Omega)}
  \lsim  \delta^{q^*_{\rm eig}+1} \Vert  u^*_{\rm eig}- \Pi^X_\delta u^*_{\rm eig}\Vert_{X}\Vert u^*_{\delta, \rm eig} - \Pi^X_\delta u^*_{\rm eig}\Vert_{L^2(\Omega)},
\end{align}
where the second step follows from the adjoint estimate \eqref{eq:adjoint_eigen_estimate}.

To bound the second term (VB) on the right hand-side, we make use of Hölder's inequality, the convergence rates stated in \textbf{Assumption B.1}, and the adjoint estimate \eqref{eq:adjoint_eigen_estimate} to deduce that
\begin{align}\nonumber
    {\rm (VB)} &\leq  \Vert V^{(2)}_{\rm eig}\Vert_{L^{\infty}(\Omega)} \Vert u^*_{\rm eig} -\Pi^X_\delta u^*_{\rm eig}\Vert_{L^2(\Omega)}\Vert \Pi_\delta^X\vartheta-\vartheta\Vert_{L^2(\Omega)}\\ \label{eq:new_proof_estimate_5b}
    &\lsim \delta^3 \Vert V^{(2)}_{\rm eig}\Vert_{L^{\infty}(\Omega)} \Vert u^*_{\rm eig} -\Pi^X_\delta u^*_{\rm eig}\Vert_{X}\Vert u^*_{\delta, \rm eig} - \Pi^X_\delta u^*_{\rm eig}\Vert_{L^2(\Omega)}.
\end{align}

Combining Estimates \eqref{eq:new_proof_estimate_5a} and \eqref{eq:new_proof_estimate_5b} and making use of the fact that the auxiliary potential $V^{(2)}_{\rm eig} \in L^{\infty}(\Omega)$ (see Equation \eqref{eq:polish_eigen_00}) while $q^*_{\rm eig} \in (0, 2]$ by assumption, we conclude that 
\begin{align}\label{eq:new_proof_estimate_5}
    {\rm (V)} \lsim  \delta^{q^*_{\rm eig}+1} \Vert  u^*_{\rm eig}- \Pi^X_\delta u^*_{\rm eig}\Vert_{X}\Vert u^*_{\delta, \rm eig} - \Pi^X_\delta u^*_{\rm eig}\Vert_{L^2(\Omega)}.
\end{align}

 In view of the estimates \eqref{eq:new_proof_estimate_1}, \eqref{eq:new_proof_estimate_2}, \eqref{eq:new_proof_estimate_3}, \eqref{eq:new_proof_estimate_4}, and \eqref{eq:new_proof_estimate_5} as well as the earlier Equation~\eqref{eq:eig_main_L2}, we deduce that for all $\delta>0$ sufficiently small, we have
\begin{align}\nonumber
    \Vert u^*_{\delta, \rm eig} - \Pi^X_\delta u^*_{\rm eig} \Vert_{L^2(\Omega)} &\lsim \delta \Vert u^*_{\delta, \rm eig} - \Pi^X_\delta u^*_{\rm eig}\Vert_X + \delta^{q_{\rm eig}^*+1}\Vert u^*_{\rm eig} - \Pi^X_\delta u^*_{\rm eig}\Vert_X\\ \label{new_proof_estimate_6}
    &+ \delta^{2}\big(\Vert u^*_{\delta, \rm eig}-u^*_{\rm eig}\Vert_{X}^2+\Vert u^*_{\delta, \rm eig}-u^*_{\rm eig}\Vert_{L^2(\Omega)}\big) + \delta^2 \Vert u^*_{\delta, \rm eig}-u^*_{\rm eig}\Vert_{L^2(\Omega)}.
\end{align}
Appealing now to the convergence rates \eqref{eq:assum_eig_1b} postulated in \textbf{Assumption B.1} and using once again the fact that $q^*_{\rm eig} \in (0, 2]$ by assumption, we can further simplify the above Estimate \eqref{new_proof_estimate_6} to
\begin{align}\nonumber
    \Vert u^*_{\delta, \rm eig} - \Pi^X_\delta u^*_{\rm eig} \Vert_{L^2(\Omega)} &\lsim \delta\Vert u^*_{\delta, \rm eig} - \Pi^X_\delta u^*_{\rm eig}\Vert_X +  \delta^{q_{\rm eig}^*+1}\Vert u^*_{\rm eig} - \Pi^X_\delta u^*_{\rm eig}\Vert_X\\  \nonumber
    &+\delta^{2}\big(\delta\Vert u^*_{\rm eig}-\Pi^X_{\delta} u^*_{\rm eig}\Vert_{X}\Vert u^*_{\rm src}\Vert_{H^2(\Omega)}+\delta\Vert u^*_{\rm eig}-\Pi^X_{\delta}u^*_{\rm eig}\Vert_{X}\big) +\delta^{3} \Vert  u^*_{\rm eig}- \Pi^X_\delta u^*_{\rm eig}\Vert_{X}\\ \label{new_proof_estimate_7}
    &\lsim \delta\Vert u^*_{\delta, \rm eig} - \Pi^X_\delta u^*_{\rm eig}\Vert_X +  \delta^{q_{\rm eig}^*+1}\Vert u^*_{\rm eig} - \Pi^X_\delta u^*_{\rm eig}\Vert_X.
\end{align}
Here, the last step uses, as usual, the fact that $u^*_{\rm eig}\in H^2(\Omega)$. 

The next step, in principle, is to make use of the above Estimate \eqref{new_proof_estimate_7} in the earlier Estimate \eqref{est:polish_1}. Prior to doing so however, it is useful to first use the convergence rates \eqref{eq:assum_eig_1a}-\eqref{eq:assum_eig_1b} postulated in \textbf{Assumption B.1} to bound Estimate \eqref{new_proof_estimate_7} in the following manner:
\begin{align}\nonumber
    \Vert u^*_{\delta, \rm eig} - \Pi^X_\delta u^*_{\rm eig} \Vert^2_{X} &\lsim  \big(\Vert u^*_{\delta, \rm eig}-u^*_{\rm eig}\Vert_{X}^2+\Vert \Pi^X_\delta u^*_{\rm eig}- u^*_{\rm eig}\Vert_{L^2(\Omega)}\big)\Vert u^*_{\delta, \rm eig} - \Pi^X_\delta u^*_{\rm eig} \Vert_{L^2(\Omega)}\\ \nonumber
 &+ \Vert u^*_{\delta, \rm eig} - \Pi^X_\delta u^*_{\rm eig}\Vert^2_{L^{2}(\Omega)}\\  \nonumber
 &\lsim \big(\delta \Vert u^*_{\rm eig} - \Pi_\delta^X u^*_{\rm eig}\Vert_{X}\Vert u^*_{\rm eig}\Vert_{H^2(\Omega)}+ \delta \Vert u^*_{\rm eig} - \Pi_\delta^X u^*_{\rm eig}\Vert_{X}\big)\Vert u^*_{\delta, \rm eig} - \Pi^X_\delta u^*_{\rm eig} \Vert_{L^2(\Omega)}\\ \nonumber
 &+ \Vert u^*_{\delta, \rm eig} - \Pi^X_\delta u^*_{\rm eig}\Vert^2_{L^{2}(\Omega)}\\ \label{eq:refine_est:polish_1}
& \lsim \delta \Vert u^*_{\rm eig} - \Pi_\delta^X u^*_{\rm eig}\Vert_{X}\Vert u^*_{\delta, \rm eig} - \Pi^X_\delta u^*_{\rm eig} \Vert_{L^2(\Omega)}+ \Vert u^*_{\delta, \rm eig} - \Pi^X_\delta u^*_{\rm eig}\Vert^2_{L^{2}(\Omega)}.
\end{align}

Applying now Estimate \eqref{new_proof_estimate_7}  to the above Estimate \eqref{eq:refine_est:polish_1} and making use of Young's inequality to replace some `mixed' terms, we deduce that for all $\delta >0$ sufficiently small it holds that
\begin{align}\nonumber
    \Vert u^*_{\delta, \rm eig} - \Pi^X_\delta u^*_{\rm eig} \Vert^2_{X}  &\lsim  \delta^2\Vert u^*_{\rm eig} - \Pi_\delta^X u^*_{\rm eig}\Vert_{X}\Vert u^*_{\delta, \rm eig} - \Pi^X_\delta u^*_{\rm eig}\Vert_X +  \delta^{q_{\rm eig}^*+2}\Vert u^*_{\rm eig} - \Pi^X_\delta u^*_{\rm eig}\Vert^2_X\\ \nonumber
    &+\delta^2\Vert u^*_{\delta, \rm eig} - \Pi^X_\delta u^*_{\rm eig}\Vert^2_X +  \delta^{2q_{\rm eig}^*+2}\Vert u^*_{\rm eig} - \Pi^X_\delta u^*_{\rm eig}\Vert_X^2\\ \label{eq:final-1}
    &\lsim  \delta^2\Vert u^*_{\delta, \rm eig} - \Pi^X_\delta u^*_{\rm eig}\Vert_X^2 +\delta^2 \Vert u^*_{\rm eig} - \Pi_\delta^X u^*_{\rm eig}\Vert_{X}\Vert u^*_{\delta, \rm eig} - \Pi^X_\delta u^*_{\rm eig}\Vert_X\\ \nonumber
    &+ \delta^{q_{\rm eig}^*+2}\Vert u^*_{\rm eig} - \Pi^X_\delta u^*_{\rm eig}\Vert^2_X.
\end{align}
Using now the fact that the term $\Vert u^*_{\delta, \rm eig} - \Pi^X_\delta u^*_{\rm eig}\Vert_X^2$ appears on the right-hand side of the above Estimate \eqref{eq:final-1} with pre-factor $\delta^2$ and appears on the left-hand side with no such $\delta$-dependent pre-factor, we deduce for all $\delta >0$ sufficiently large the simpler bound
\begin{align*}
    \Vert u^*_{\delta, \rm eig} - \Pi^X_\delta u^*_{\rm eig} \Vert^2_{X}  &\lsim  \delta^2 \Vert u^*_{\rm eig} - \Pi_\delta^X u^*_{\rm eig}\Vert_{X}\Vert u^*_{\delta, \rm eig} - \Pi^X_\delta u^*_{\rm eig}\Vert_X+ \delta^{q_{\rm eig}^*+2}\Vert u^*_{\rm eig} - \Pi^X_\delta u^*_{\rm eig}\Vert^2_X,
\end{align*}
i.e.,
\begin{align}\label{eq:quad_2'}
    \Vert u^*_{\delta, \rm eig} - \Pi^X_\delta u^*_{\rm eig} \Vert^2_{X}  &\le C_1 \delta^2 \Vert u^*_{\rm eig} - \Pi_\delta^X u^*_{\rm eig}\Vert_{X}\Vert u^*_{\delta, \rm eig} - \Pi^X_\delta u^*_{\rm eig}\Vert_X+ C_2\delta^{q_{\rm eig}^*+2}\Vert u^*_{\rm eig} - \Pi^X_\delta u^*_{\rm eig}\Vert^2_X,
\end{align}
for some positive constants $C_1$ and $C_2$, both independent of $\delta >0$.  

Notice now that, thanks to Young's inequality, for all $\delta >0$ we have the bound
\begin{align}\label{2K10}
C_1 \delta^2 \Vert u^*_{\rm eig} - \Pi_\delta^X u^*_{\rm eig}\Vert_{X}\Vert u^*_{\delta, \rm eig} - \Pi^X_\delta u^*_{\rm eig}\Vert_X \le \frac{C^2_1}{2}
\left[ \delta^4 \Vert u^*_{\rm eig} - \Pi_\delta^X u^*_{\rm eig}\Vert_{X}^2\right] + \frac{1}{2}
\Vert u^*_{\delta, \rm eig} - \Pi^X_\delta u^*_{\rm eig}\Vert_X^2.
\end{align}
Consequently, making use of Inequality \eqref{2K10} in Estimate \eqref{eq:quad_2'}, we deduce that for all $\delta>0$ small enough, it holds that
\begin{align}\nonumber
    \Vert u^*_{\delta, \rm eig} - \Pi^X_\delta u^*_{\rm eig} \Vert_{X} &\lsim \delta^2 \Vert u^*_{\rm eig} - \Pi_\delta^X u^*_{\rm eig}\Vert_{X} + \Big(\delta^{q_{\rm eig}^*+2}\Vert u^*_{\rm eig} - \Pi^X_\delta u^*_{\rm eig}\Vert^2_X\Big)^{1/2}\\[0.5em] \nonumber
    &\lsim \Big(\delta^2+\delta^{q_{\rm eig}^*/2+1}\Big) \Vert u^*_{\rm eig} - \Pi_\delta^X u^*_{\rm eig}\Vert_{X}.
\end{align}
Recalling again that the constant $q^*_{\rm eig} \in (0, 2]$ by \textbf{Assumption B.3} we finally arrive at the estimate
\begin{align}\label{eq:eig_lem_final}
    \Vert u^*_{\delta, \rm eig} - \Pi^X_\delta u^*_{\rm eig} \Vert_{X} \lsim \delta^{q_{\rm eig}^*/2+1}\Vert u^*_{\rm eig} - \Pi_\delta^X u^*_{\rm eig}\Vert_{X}.
\end{align}
We have thus obtained the sought-after estimate \eqref{eq:eig_higher_rates} in the $\Vert \cdot \Vert_X$ norm. To complete the proof, we require the corresponding estimate in the $\Vert \cdot \Vert_{L^2}$ norm. This can be obtained by making use of the above Estimate \eqref{eq:eig_lem_final} in the prior bound \eqref{new_proof_estimate_7} and utilising once again the fact that $q^*_{\rm eig} \in (0, 2]$ by \textbf{Assumption B.3}.
\end{proof}

\section{Application to Conforming Finite Element Discretisations}\label{sec:4}

Throughout this section, we assume that \textbf{Setting One} holds so that $X=H^1_0(\Omega)$. Let us now consider a family of regular, quasi-uniform triangulations $\{\mathcal{T}_h\}_{h > 0}$ of $\Omega$. Thus, for each $h>0$ and, for instance, $d=3$, the set  $\mathcal{T}_h$ consists of a collection of tetrahedra such that
\begin{itemize}
	\item $\overline{\Omega} = \cup_{K \in \mathcal{T}_h}K$;
	
	\item The intersection of any  $K, K' \in \mathcal{T}_{h}$ with $K\neq K'$ is either empty, a vertex, an entire edge or an entire face.
	
	\item The ratio $h_k$ of the diameter of any element $K \in \mathcal{T}_h$ to the diameter of its inscribed sphere is smaller than a constant independent of $h$. \vspace{1mm}

    \item The ratio $\frac{\max_{K \in \mathcal{T}_h} \vert K\vert }{\min_{K \in \mathcal{T}_h} \vert K\vert }$ is bounded by a constant independent of $h$.
\end{itemize}
Here, $h:= \max_{K \in \mathcal{T}_h} h_k$ and plays the role of the discretisation parameter.

Next, for each $K \in \mathcal{T}_h$ and each $n \in \mathbb{N}$, we denote by $\mathcal{P}_n(K)$ the space of restrictions to $K$ of algebraic polynomials of $d$ variables with maximum total degree $n$. Additionally, for each $h >0$, we define the finite element space $\mathcal{S}_{h, n}$ as the set of all continuous functions on $\overline{\Omega}$ such that the restriction of any element of $\mathcal{S}_{h, n}$ to an element $K \in \mathcal{T}_h$ belongs to $\mathcal{P}_n(K)$. For a given choice of polynomial degree $n \in \mathbb{N}$, we now define the sequence of approximation spaces $\{X_{h, n}\}_{h >0}$ that we consider in this section as
\begin{align}\label{eq:FEM_spaces}
\forall h >0\colon \qquad X_{h, n}:= \mathcal{S}_{h, n} \cap H_0^1(\Omega).
\end{align}

The application of Lemmas \ref{lem:combine_1} and \ref{lem:combine_2} to this choice of Galerkin approximation spaces results in the following theorem.

\begin{theorem}\label{thm:1a}
    Assume that \textbf{Setting One} holds, let $u^*_{\rm src}\in X$ denote the unique minimiser of the continuous energy functional \eqref{eq:1.1}, for a given polynomial degree $n \in \mathbb{N}$ let the finite element approximation spaces $\{X_{h, n}\}_{h >0}$ be defined according to Equation \eqref{eq:FEM_spaces}, let $u^*_{h,n, \rm src}\in X_{h, n}, ~ h>0$ denote the unique discrete minimiser of the discrete energy functional \eqref{eq:1.5}, and for every $h >0$ let $\Pi^X_{h, n} \colon X \rightarrow X_{h, n}$ denote the $(\cdot, \cdot)_X$-orthogonal projection operator onto $X_{h, n}$. Then for all $h>0$ sufficiently small and any $t \in [0, 3/2) \cap [0, \min\{r_v, n-1\}]$ we have the error estimate
    \begin{align}\label{eq:FEM_src_low}
    \Vert \Pi_{h, n}^X u^*_{\rm src}-u^*_{h,n, \rm src}\Vert_{H^1(\Omega)}&\lsim h\Vert \Pi_{h, n}^X u^*_{\rm src}-u^{*}_{\rm src}\Vert_{H^{1}(\Omega)} \hspace{4.2mm}\quad \text{if } n=1\\ \label{eq:FEM_src_high}
       \Vert \Pi_{h, n}^X u^*_{\rm src}-u^*_{h, n, \rm src}\Vert_{L^2(\Omega)} +h^{t/2}\Vert \Pi_{h, n}^X u^*_{\rm src}-u^*_{h, n, \rm src}\Vert_{H^1(\Omega)}&\lsim h^{t+1}\Vert \Pi_{h, n}^X u^*_{\rm src}-u^{*}_{ \rm src}\Vert_{H^{1}(\Omega)} \quad \text{if } n > 1.
    \end{align}
\end{theorem}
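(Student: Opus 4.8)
The plan is to obtain Theorem~\ref{thm:1a} by applying Lemma~\ref{lem:combine_2} with $\delta=h$, the space $X=H^1_0(\Omega)$ (so that $\Vert\cdot\Vert_X=\Vert\nabla\cdot\Vert_{L^2(\Omega)}$ is equivalent to $\Vert\cdot\Vert_{H^1(\Omega)}$ by Poincaré's inequality), the approximation spaces $X_\delta=X_{h,n}$ of~\eqref{eq:FEM_spaces}, and $\Pi^X_\delta=\Pi^X_{h,n}$, which is precisely the Ritz/Galerkin projection associated with $-\Delta$. I would first verify \textbf{Assumptions A.1} and~\textbf{A.2} for every polynomial degree $n\ge 1$; Estimate~\eqref{eq:source_lower_rates} then gives~\eqref{eq:FEM_src_low}. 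For $n>1$ I would additionally verify \textbf{Assumption A.3} with $q^*_{\rm src}=t$ for every $t\in(0,3/2)\cap(0,\min\{r_v,n-1\}]$, so that Estimate~\eqref{eq:source_higher_rates} yields~\eqref{eq:FEM_src_high}; the remaining endpoint $t=0$ of~\eqref{eq:FEM_src_high} follows as above from~\eqref{eq:source_lower_rates} together with $\Vert\cdot\Vert_{L^2(\Omega)}\le\Vert\cdot\Vert_{H^1(\Omega)}$.

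\textbf{Assumptions A.1} and~\textbf{A.2} are routine. Estimate~\eqref{eq:assum_1a} with $r_{\rm src}=\min\{n+1,t_{\rm src}+2\}\ge 2$ is the classical a priori error bound for conforming finite element discretisations of the strictly convex problem~\eqref{eq:1.1} (combining a Céa-type argument built on the coercive principal part and the compact, locally Lipschitz lower-order terms, polynomial interpolation estimates, and the Aubin--Nitsche duality argument), using the elliptic regularity $u^*_{\rm src}\in H^{t_{\rm src}+2}(\Omega)$ from~\eqref{eq:elliptic_regularity}; Estimate~\eqref{eq:assum_1b} is the analogue for the Ritz projection, its $L^2$-half again following from Aubin--Nitsche and the $H^2$-regularity of the Dirichlet Laplacian on the convex domain $\Omega=(-1,1)^d$. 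Since continuous $\mathcal{P}_n$ finite element functions do not lie in $H^2(\Omega)$, Remark~\ref{rem:inverse} does not apply to~\eqref{eq:assum_3}, so I would argue directly: writing $\Vert u^*_{h,n,\rm src}\Vert_{L^\infty(\Omega)}\le\Vert u^*_{h,n,\rm src}-I_h u^*_{\rm src}\Vert_{L^\infty(\Omega)}+\Vert I_h u^*_{\rm src}\Vert_{L^\infty(\Omega)}$ with $I_h$ the Lagrange interpolant, the second term is bounded by $\Vert u^*_{\rm src}\Vert_{L^\infty(\Omega)}$ and the first, via the inverse inequality on the quasi-uniform mesh and~\eqref{eq:assum_1a}, by $h^{-d/2}\Vert u^*_{h,n,\rm src}-I_h u^*_{\rm src}\Vert_{L^2(\Omega)}\lsim h^{2-d/2}\Vert u^*_{\rm src}\Vert_{H^2(\Omega)}\to 0$ since $d\le 3$.

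The substance of the proof is \textbf{Assumption A.3}. The central point is a regularity estimate on the auxiliary potential: I claim that $\{u^*_{\delta,\rm src}\}_{\delta>0}$, hence $\{\widetilde{V}_\delta\}_{\delta>0}$, is uniformly bounded in $H^{\min\{r_v,\,3/2-\epsilon\}}(\Omega)$ for every $\epsilon>0$. Indeed, writing $u^*_{\delta,\rm src}=(u^*_{\delta,\rm src}-I_h u^*_{\rm src})+I_h u^*_{\rm src}$, the inverse inequality $\Vert v_\delta\Vert_{H^{3/2-\epsilon}(\Omega)}\lsim h^{-(1/2-\epsilon)}\Vert v_\delta\Vert_{H^1(\Omega)}$ trades the factor $h^{-(1/2-\epsilon)}$ against the $\mathcal{O}(h)$ rate of $\Vert u^*_{\delta,\rm src}-I_h u^*_{\rm src}\Vert_{H^1(\Omega)}$ (from~\eqref{eq:assum_1a} and polynomial interpolation), so that this difference vanishes in $H^{3/2-\epsilon}(\Omega)$, while $\Vert I_h u^*_{\rm src}\Vert_{H^{3/2-\epsilon}(\Omega)}\lsim\Vert u^*_{\rm src}\Vert_{H^2(\Omega)}$; adding the fixed, smooth contribution $V-1+(u^*_{\rm src})^2$ gives the claim. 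With this in hand I would introduce, for $\varphi\in H^2(\Omega)\cap X$, the adjoint problem $w\in H^1_0(\Omega)$, $-\Delta w=\widetilde{V}_\delta\varphi$: by the Sobolev multiplication estimate $\widetilde{V}_\delta\varphi\in H^t(\Omega)$ with $\Vert\widetilde{V}_\delta\varphi\Vert_{H^t(\Omega)}\lsim\Vert\varphi\Vert_{H^2(\Omega)}$ uniformly in $\delta$ for $t\le\min\{r_v,3/2-\epsilon\}$, and elliptic regularity for the Dirichlet problem on $\Omega=(-1,1)^d$ then yields $w\in H^{t+2}(\Omega)$ with $\Vert w\Vert_{H^{t+2}(\Omega)}\lsim\Vert\varphi\Vert_{H^2(\Omega)}$ uniformly in $\delta$. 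Exploiting the orthogonality $(\chi-\Pi^X_\delta\chi,v_\delta)_X=0$ for all $v_\delta\in X_{h,n}$ and integrating by parts,
\begin{equation*}
\int_\Omega\widetilde{V}_\delta\,\varphi\,(\chi-\Pi^X_\delta\chi)\,d{\bf x}=(w,\chi-\Pi^X_\delta\chi)_X=(w-\Pi^X_\delta w,\chi-\Pi^X_\delta\chi)_X\le\Vert w-\Pi^X_\delta w\Vert_X\,\Vert\chi-\Pi^X_\delta\chi\Vert_X,
\end{equation*}
and since $\Vert w-\Pi^X_\delta w\Vert_X\lsim h^{t+1}\Vert w\Vert_{H^{t+2}(\Omega)}$ requires only $t+2\le n+1$, i.e.\ $t\le n-1$, this establishes~\eqref{eq:assum_4b} with $q^*_{\rm src}=t$ for $t\in(0,3/2)\cap(0,\min\{r_v,n-1\}]$.

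I expect the verification of \textbf{Assumption A.3} — above all the uniform $H^{3/2-\epsilon}(\Omega)$ bound on the discrete minimisers, which is exactly where the ceiling $t<3/2$ originates (a continuous piecewise polynomial is controlled only in $H^{3/2-\epsilon}$ because of the gradient jumps across element interfaces), together with the bookkeeping that pins down the range $t\in[0,3/2)\cap[0,\min\{r_v,n-1\}]$ from the three competing constraints (elliptic regularity, the regularity of $V$, and the finite element approximation order) — to be the only genuinely technical step; once \textbf{A.1}, \textbf{A.2} and \textbf{A.3} are in place, Estimates~\eqref{eq:FEM_src_low} and~\eqref{eq:FEM_src_high} follow at once from Lemma~\ref{lem:combine_2}.
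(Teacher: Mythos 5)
Your overall strategy mirrors the paper's: verify \textbf{A.1}--\textbf{A.3} and invoke Lemma~\ref{lem:combine_2}, with \textbf{A.2} handled directly by interpolation plus an inverse inequality (you use the $L^\infty$--$L^2$ inverse estimate where the paper uses the $L^\infty$--$H^1$ one, but both deliver the required uniform $L^\infty$ bound), and \textbf{A.3} handled via the same adjoint Poisson problem, Galerkin orthogonality, and a uniform fractional-Sobolev bound on the discrete minimisers. The range $t<3/2$ is correctly traced to the $H^{3/2-\epsilon}$ regularity ceiling of continuous piecewise polynomials.

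There is, however, a genuine gap in the elliptic regularity step. You assert that for $w\in H^1_0(\Omega)$ solving $-\Delta w=\widetilde{V}_\delta\varphi$ with $\widetilde{V}_\delta\varphi\in H^t(\Omega)$, ``elliptic regularity for the Dirichlet problem on $\Omega=(-1,1)^d$ then yields $w\in H^{t+2}(\Omega)$.'' This is false as stated once $t\ge 1$: on a square or cube the Dirichlet Laplacian produces corner singularities of the type $r^2\log r\,\sin 2\theta$ (the model case being $-\Delta w=1$ on the unit square, whose solution is only in $H^{3-\epsilon}$), so $H^t$ data does not in general give $H^{t+2}$ solutions for $t\ge 1$. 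Since the theorem covers $t$ up to $\min\{r_v,n-1,3/2-\epsilon\}$, for $n\ge 3$ and $r_v$ sufficiently large your argument needs exactly the range $t\in[1,3/2)$ where the naive claim breaks down. The paper circumvents this by first establishing (its Claims 1 and 2) that the right-hand side $\widetilde{V}_{h,n}\varphi$ belongs not merely to $H^t(\Omega)$ but to $H^t(\Omega)\cap H^1_0(\Omega)\cap\mathcal{C}^0(\overline{\Omega})$ — crucially, it vanishes on $\partial\Omega$ and in particular at the corners — and then invokes the appendix Lemma~\ref{lem:appendix}: extend the data by odd reflection to a periodic function, solve the periodic Poisson problem where no corner singularities exist, and observe that the periodic solution restricted to $\Omega$ coincides with the Dirichlet solution. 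Only this extra compatibility condition and extension argument justify the uniform $H^{t+2}$ bound for all $t\in[0,3/2)\cap[0,r_v]$. Your proof should either invoke Lemma~\ref{lem:appendix} after establishing the $H^1_0\cap\mathcal{C}^0$ membership of $\widetilde{V}_\delta\varphi$, or supply an equivalent argument showing that the corner singularity is suppressed.
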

\begin{proof}
The proof is based on a verification of \textbf{Assumptions A.1, A.2,} and \textbf{A.3} which will allow us to apply Lemmas \ref{lem:combine_1} and \ref{lem:combine_2}. Note that these assumptions and lemmas have been formulated for approximation spaces indexed by the discretisation parameter $\delta >0$ so we identify $h=\delta$. 

The verification of \textbf{Assumption A.1} is trivial. Indeed, classical arguments yield that the Lagrangian~$\mathcal{P}_n$ finite elements satisfy the convergence estimate \eqref{eq:assum_1a} with rate (at least) $r_{\rm src}=2$, and it is also known (see, e.g., \cite[Proposition 1.134]{MR2050138}) that the orthogonal projection operator $\Pi_h^X\colon X \rightarrow X_h, ~ h>0$ satisfies the estimate \eqref{eq:assum_1b}.

The verification of \textbf{Assumption A.2} is also largely classical: since the exact minimiser $u^*_{\rm src} \in H^2(\Omega)$, it clearly suffices to show that
\begin{align*}
    \Vert u^*_{\rm src} - u^*_{h, n, \rm src} \Vert_{L^{\infty}(\Omega)} \qquad \text{is uniformly bounded in }h.
\end{align*}
To this end, we introduce (see, e.g., \cite[Section 1.4.2]{MR2050138}) the global interpolation operator $\mathcal{I}_{h, n}\colon \mathcal{C}^0(\overline{\Omega})\rightarrow X_{h, n}$ for Lagrangian $\mathcal{P}_n$ finite elements and deduce that for all $h >0$
\begin{align}\label{eq:new_FEM_1}
\Vert u^*_{\rm src} - u^*_{h, n, \rm src} \Vert_{L^{\infty}(\Omega)} &\leq \Vert u^*_{\rm src} - \mathcal{I}_{h, n} u^*_{\rm src} \Vert_{L^{\infty}(\Omega)}+\Vert \mathcal{I}_{h, n} u^*_{\rm src} - u^*_{h, n, \rm src} \Vert_{L^{\infty}(\Omega)}.
\end{align}
It is known (see, e.g, \cite[Theorem 4.4.20 and Remark 4.4.27]{MR2373954}) that the interpolation operator $\mathcal{I}_{h, n}$ satisfies the approximability property
\begin{align*}
\forall h> 0, ~\forall v \in H^{2}(\Omega) \colon \qquad \Vert v - \mathcal{I}_{h, n} v \Vert_{L^{\infty}(\Omega)} \lsim h^{1/2} \Vert v \Vert_{H^{2}(\Omega)}.
\end{align*}
Consequently, Inequality \eqref{eq:new_FEM_1} implies that for all $h > 0$ it holds that
\begin{align*}
\Vert u^*_{\rm src} - u^*_{h,n, \rm src} \Vert_{L^{\infty}(\Omega)} &\lsim h^{1/2} \Vert u^*_{\rm src} \Vert_{H^{2}(\Omega)} + \Vert \mathcal{I}_{h, n} u^*_{\rm src} - u^*_{h, n, \rm src} \Vert_{L^{\infty}(\Omega)}.
\end{align*}
To bound the second term on the right-hand side of the above inequality, we use the $L^{\infty}$-inverse inequality for Lagrangian $\mathcal{P}_n$ finite element approximation spaces (see, e.g., \cite[Lemma 1.142]{MR2050138}). This yields that for all $h$ sufficiently small, it holds that
\begin{align}\nonumber
\Vert u^*_{\rm src} - u^*_{h, n, \rm src} \Vert_{L^{\infty}(\Omega)} &\lsim h^{1/2} \Vert u^*_{\rm src} \Vert_{H^{2}(\Omega)} + h^{-1/2}\Vert \mathcal{I}_{h, n} u^*_{\rm src} - u^*_{h, n, \rm src} \Vert_{H^{1}(\Omega)}\\ \nonumber
&\lsim h^{1/2} \Vert u^*_{\rm src} \Vert_{H^{2}(\Omega)} + h^{-1/2}\Vert \mathcal{I}_{h, n} u^*_{\rm src} - u^*_{\rm src} \Vert_{H^{1}(\Omega)}+ h^{-1/2}\Vert u^*_{\rm src} - u^*_{h, n, \rm src} \Vert_{H^{1}(\Omega)}\\ \label{eq:new_FEM_2}
&\lsim h^{1/2} \Vert u^*_{\rm src} \Vert_{H^{2}(\Omega)},
\end{align}
where the last step follows from Estimate \eqref{eq:assum_1b} stated in \textbf{Assumption A.1} as well as the approximability properties of $\mathcal{I}_{h, n}$ in the $H^1(\Omega)$-norm (see, again, \cite[Theorem 4.4.20 and Remark 4.4.27]{MR2373954}). The verification of \textbf{Assumption A.2} now follows.

\vspace{1mm}

Having verified \textbf{Assumptions A.1} and \textbf{A.2}, we can now apply Estimate \eqref{eq:source_lower_rates} from Lemma \ref{lem:combine_2} to deduce the first sought-after Estimate \eqref{eq:FEM_src_low}. 

\medskip
To prove the second sought-after Estimate \eqref{eq:FEM_src_high}, we assume that the polynomial degree $n> 1$ and verify that \textbf{Assumption A.3} also holds. To this end, for each $h >0$ let the auxiliary potential function $\widetilde{V}_{h, n} \colon \Omega \rightarrow \mathbb{R}$ be given by (cf. Definition \ref{def:v_aux})
\begin{align*}
    \widetilde{V}_{h, n}:= V + \left(u^*_{\rm src}\right)^2 + \left(u^*_{h, n, \rm src}\right)^2 + u^*_{\rm src}u^*_{h, n, \rm src} -1. 
\end{align*}
We are interested in bounding, for any $h>0$ sufficiently small, all $\varphi\in H^2(\Omega)\cap X$ and all $\chi \in X$ the expression
    \begin{align*}
    \int_\Omega \widetilde{V}_{h, n}({\bf x})  \varphi({\bf x}) [\chi- \Pi^X_h \chi]({\bf x})\; d{\bf{x}}.
\end{align*}
To do so, we introduce, for each $h >0$, the following adjoint problem: we seek $ u \in X $ such that for all $v \in X$ it holds that
	\begin{equation}\label{def:adjoint_FEM_4}
	\int_\Omega  \nabla v({\bf{x}}) \cdot \nabla u ({\bf{x}})\; d {\bf{x}} = \int_\Omega \widetilde{V}_{h, n}({\bf{x}}) \varphi({\bf{x}}) v({\bf{x}}) d{\bf{x}}.
	\end{equation}	
\vspace{1mm}

We now make the following claims:\vspace{1mm}
\begin{description}
\item[Claim 1] For any $h > 0$ and any $t \in [0, 3/2) \cap [0, r_v]$, we have the inclusion \[\widetilde{V}_{h, n}\varphi \in H^{t}(\Omega) \cap H_0^1(\Omega)\cap \mathcal{C}^0(\overline{\Omega})\]. 
\vspace{-3mm}
\item[Claim 2] For any $t \in [0, 3/2) \cap [0, r_v]$, we have uniform boundedness of $\{\widetilde{V}_{h, n}\}_{h >0}$ in $H^{t}(\Omega)$.
\end{description}
\vspace{2mm}
In order to prove \textbf{Claim 1}, notice first that since $\varphi \in X = H_0^1(\Omega)$, it clearly suffices to show that
\begin{align*}
    \widetilde{V}_{h, n}\varphi \in H^{t}(\Omega)\cap \mathcal{C}^0(\overline{\Omega}) \qquad \forall t\in [0, 3/2) \cap [0, r_v].
\end{align*}
We first prove that $\widetilde{V}_{h, n}\varphi  \in \mathcal{C}^0(\overline{\Omega})$. To this end, let us recall the Sobolev embedding $H^{s}(\Omega) \hookrightarrow  \mathcal{C}^0(\overline{\Omega})$ for any $s >d/2$ (see, e.g., \cite[Corollary 9.15]{brezis2011functional}). Since, we additionally have $V \in H^{r_v}(\Omega)$ for some $r_v > d/2$, $u^*_{\rm src }\in H^2(\Omega)$ thanks to the elliptic regularity result \eqref{eq:elliptic_regularity}, and the trivial inclusion $\mathcal{S}_{h, n}\subset  \mathcal{C}^0{\overline{\Omega}}$, we conclude that indeed
\begin{align}\label{eq:c_inclusion}
    \widetilde{V}_{h, n}\varphi  \in \mathcal{C}^0(\overline{\Omega}).
\end{align}
Turning now to the inclusion in the Sobolev space, it suffices to notice that that the finite element approximation space $\mathcal{S}_{h, n} \in H^{t}(\Omega)$ for any $t < 3/2$, any $h >0$ and any $n \in \mathbb{N}$ (see, e.g., \cite[Theorem 2.9]{belgacem2001some}). Consequently, we can apply the inclusion \eqref{eq:c_inclusion} above together with the easily-verified estimate 
\begin{align}\label{eq:sobolev_product}
 \forall s\geq 0, \forall u, v \in H^s(\Omega) \cap L^\infty(\Omega) \colon  \qquad \Vert uv \Vert_{H^{s}(\Omega)}\lsim \bigl(\Vert u\Vert_{H^{s}(\Omega)}  + \Vert u\Vert_{L^{\infty}(\Omega)}\bigr) \bigl( \Vert v \Vert_{H^{s}(\Omega)} + \Vert v \Vert_{L^{\infty}(\Omega)} \bigr),
\end{align}
to conclude that indeed.
\begin{align*}
    \widetilde{V}_{h, n}\varphi  \in H^{t}(\Omega) \qquad \forall t\in [0, 3/2) \cap [0, r_v].
\end{align*}
This completes the proof of \textbf{Claim 1}.

\textbf{Claim 2} can be proven following arguments along the same lines as those of Remark \ref{rem:inverse}. To do so we require the availability of approximation rates for the global interpolation operator $\mathcal{I}_{h, n} \colon \mathcal{C}^0(\overline{\Omega}) \rightarrow X_{h, n}$ in the fractional Sobolev norm $\Vert \cdot \Vert_{H^t(\Omega)}$  for all $t \in [0, 3/2)$. These approximation rates can be found in \cite[Theorem 2.6]{belgacem2001some}. We also require inverse inequalities for the finite element approximation space $\mathcal{S}_{h, n}, ~h>0, n\in \mathbb{N}$, again with respect to the fractional Sobolev norm $\Vert \cdot \Vert_{H^t(\Omega)}$ for all $t \in [0, 3/2)$. The necessary inverse inequalities can be found in \cite[Theorem 2.9]{belgacem2001some}. For the sake of brevity, we desist from repeating the arguments.

Thanks to \textbf{Claim 1}, we can now appeal to Lemma \ref{lem:appendix} in the appendix, to deduce from elliptic regularity theory that, despite $\Omega$ being a corner domain, Equation \eqref{def:adjoint_FEM_4} has a unique solution $u \in X \cap H^{t+2}(\Omega)$ for any $t \in [0, 3/2) \cap [0, r_v]$, that satisfies the estimate
\begin{align}\label{eq:2.5_FEM*}
    \Vert u \Vert_{H^{2+t}(\Omega)} \lsim \Vert \widetilde{V}_{h, n} \varphi\Vert_{H^{t}(\Omega)}.
\end{align}
Making use of the adjoint problem \eqref{def:adjoint_FEM_4}, we deduce with the help of Estimate \eqref{eq:2.5_FEM*} that for all $t \in [0, 3/2) \cap [0, \min\{r_v, n-1\} ]$ it holds that
\begin{align}\nonumber
    \int_\Omega \widetilde{V}_{h, n}({\bf x})  \varphi({\bf{x}})[\chi-\Pi_{h, n}^{X}\chi]({\bf x})\; d{\bf{x}} &= \int_\Omega \nabla u ({\bf{x}}) \cdot \nabla [\chi-\Pi_N^{X}\chi]({\bf x})\; d {\bf{x}}\\ \nonumber
    &=\int_\Omega \nabla[u- \Pi_{h, n}^X u] ({\bf{x}}) \cdot  \nabla [\chi-\Pi_{h, n}^{X}\chi]({\bf x}) \; d {\bf{x}}\nonumber\\ \nonumber
&\leq \Vert u - \Pi_{h, n}^Xu\Vert_{X} \Vert \chi - \Pi_{h, n}^X\chi \Vert_{X}\\ \nonumber
&\lsim h^{1+t}\Vert u\Vert_{H^{2+t}(\Omega)} \Vert \chi - \Pi_{h, n}^X\chi \Vert_{X}\\ \nonumber
&\lsim h^{1+t}\Vert \widetilde{V}_{h, n}\vartheta\Vert_{H^{t}(\Omega)} \Vert \chi - \Pi_{h, n}^X\chi \Vert_{X}\\ \nonumber
\lsim h^{1+t}\Big(\bigl(\Vert \widetilde{V}_{h, n}\Vert_{H^{t}(\Omega)} &+ \Vert \widetilde{V}_{h, n}\Vert_{L^{\infty}(\Omega)}\bigr)\bigl(\Vert \vartheta\Vert_{H^{t}(\Omega)}+\Vert \vartheta\Vert_{L^{\infty}(\Omega)}\bigr)\Big) \Vert \chi - \Pi_{h, n}^X\chi \Vert_{X}.
\end{align}
Here, we have used the higher-order convergence rates (that depend on~$n$) satisfied by the $(\cdot, \cdot)_X$-orthogonal projection operator $\Pi_{h, n}^X$, which can, e.g., be found in \cite[Theorem 3.16]{ern2004theory}, and the last step follows from Estimate \eqref{eq:sobolev_product}. Recalling now \textbf{Claim 2}, Lemma \ref{lem:combine_1} and the inclusion $\vartheta \in H^2(\Omega)$, we deduce that for all $h > 0$ sufficiently small, all $n \in \mathbb{N}$, and all $t \in [0, 3/2) \cap [0, \min\{r_v, n-1\} ]$ it holds that
\begin{align}
 \int_\Omega \widetilde{V}_{h, n}({\bf x})  \varphi({\bf{x}})[\chi-\Pi_N^{X}\chi]({\bf x})\; d{\bf{x}} \lsim  h^{1+t} \Vert \chi - \Pi_{h, n}^X\chi \Vert_{X}.
\end{align}
It therefore follows that \textbf{Assumption A.3} also holds with any constant $q^*_{\rm src} \in [0, 3/2) \cap [0, \min\{r_v, n-1\} ]$. We can thus apply Lemma \ref{lem:combine_2} to conclude.
\end{proof}

In a similar fashion, the application of Lemma \ref{lem:combine_eig_2} to the present choice of conforming finite element Galerkin approximation spaces results in the following theorem.
\begin{theorem}\label{thm:1b}
    Assume that \textbf{Setting One} holds, let $u^*_{\rm eig}\in X$ denote the unique positive minimiser of the continuous energy functional \eqref{eq:1.2}, for a given polynomial degree $n \in \mathbb{N}$ let the finite element approximation spaces $\{X_{h, n}\}_{h >0}$ be defined according to Equation \eqref{eq:FEM_spaces}, let $u^*_{h, n, \rm eig}\in X_{h, n}, ~ h>0$ denote a sequence of discrete minimisers of the discrete energy functional \eqref{eq:1.5} that satisfy $\big(u^*_{h, n, \rm eig}, u^*_{\rm eig}\big)_{L^2(\Omega)}\geq 0$, and for every $h >0$ let $\Pi^X_{h, n} \colon X \rightarrow X_h$ denote the $(\cdot, \cdot)_X$-orthogonal projection operator onto $X_{h, n}$. Then for all $h>0$ sufficiently small and any $t \in [0, 3/2) \cap [0, \min\{r_v, n-1\}]$ we have the error estimates
    \begin{align}\label{eq:FEM_eig_low}
    \Vert \Pi_{h, n}^X u^*_{\rm eig}-u^*_{h,n, \rm eig}\Vert_{H^1(\Omega)}&\lsim h\Vert \Pi_{h, n}^X  u^*_{\rm eig}-u^{*}_{\rm eig}\Vert_{H^{1}(\Omega)} \hspace{3.99mm}\quad \text{if } n=1\\ \label{eq:FEM_eig_high}
       \Vert \Pi_{h, n}^X u^*_{\rm eig}-u^*_{h, n, \rm eig}\Vert_{L^2(\Omega)} +h^{t/2}\Vert \Pi_{h, n}^X u^*_{\rm eig}-u^*_{h, n, \rm eig}\Vert_{H^1(\Omega)}&\lsim h^{t+1}\Vert \Pi_{h, n}^X u^*_{\rm eig}-u^{*}_{\rm eig}\Vert_{H^{1}(\Omega)} \quad \text{if } n >1.
    \end{align}
\end{theorem}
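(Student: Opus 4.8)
The plan is to mimic, line for line, the proof of Theorem \ref{thm:1a}: identify the discretisation parameter $h$ with the abstract parameter $\delta$, verify \textbf{Assumptions B.1}, \textbf{B.2}, and --- when $n>1$ --- \textbf{B.3} for the finite element spaces $\{X_{h,n}\}_{h>0}$ of \eqref{eq:FEM_spaces}, and then invoke Lemma \ref{lem:combine_eig_2}. The estimate \eqref{eq:FEM_eig_low} (case $n=1$) will come from \eqref{eq:eig_lower_rates}, and \eqref{eq:FEM_eig_high} (case $n>1$) from \eqref{eq:eig_higher_rates} with $q^*_{\rm eig}=t$.

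\textbf{Assumption B.1} requires essentially no work: the double bound \eqref{eq:assum_eig_1a} with $r_{\rm eig}=2$ is precisely the a priori error estimate \eqref{eq:rates_2} of \cite{cances2010numerical} for Galerkin approximations of \eqref{eq:1.2} (valid for any sequence of discrete minimisers satisfying $(u^*_{h,n,\rm eig},u^*_{\rm eig})_{L^2(\Omega)}\ge 0$), while \eqref{eq:assum_eig_1b} is the same classical projection estimate used in Theorem \ref{thm:1a}. \textbf{Assumption B.2} is then obtained by the identical interpolation-plus-inverse-inequality argument: writing $\Vert u^*_{\rm eig}-u^*_{h,n,\rm eig}\Vert_{L^\infty(\Omega)}\le\Vert u^*_{\rm eig}-\mathcal I_{h,n}u^*_{\rm eig}\Vert_{L^\infty(\Omega)}+\Vert\mathcal I_{h,n}u^*_{\rm eig}-u^*_{h,n,\rm eig}\Vert_{L^\infty(\Omega)}$, bounding the first term by $h^{1/2}\Vert u^*_{\rm eig}\Vert_{H^2(\Omega)}$ via the interpolation estimate, and the second via the $L^\infty$-inverse inequality followed by \textbf{Assumption B.1} and the $H^1$-approximability of $\mathcal I_{h,n}$; here $u^*_{\rm eig}\in H^2(\Omega)$ by \eqref{eq:elliptic_regularity}. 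With \textbf{B.1} and \textbf{B.2} in hand, Lemma \ref{lem:combine_eig_2} immediately yields \eqref{eq:FEM_eig_low}.

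For $n>1$ the substance of the proof is \textbf{Assumption B.3}, which now involves two estimates. The bound \eqref{eq:assum_4} for a fixed generic potential $\widetilde W\in H^{r_v}(\Omega)$ is verified exactly as the analogous bound in Theorem \ref{thm:1a}, and is in fact simpler since $\widetilde W$ is $\delta$-independent: one introduces the adjoint problem $-\Delta u=\widetilde W\varphi$ in $X=H^1_0(\Omega)$, notes that $\widetilde W\varphi\in H^t(\Omega)\cap H^1_0(\Omega)\cap\mathcal C^0(\overline\Omega)$ for all $t\in[0,3/2)\cap[0,r_v]$ using the embedding $H^{r_v}(\Omega)\hookrightarrow\mathcal C^0(\overline\Omega)$ together with the Sobolev product estimate \eqref{eq:sobolev_product}, invokes the corner-domain elliptic regularity result Lemma \ref{lem:appendix} to get $u\in H^{2+t}(\Omega)$ with $\Vert u\Vert_{H^{2+t}(\Omega)}\lsim\Vert\widetilde W\varphi\Vert_{H^t(\Omega)}$, and finally writes $\int_\Omega\widetilde W\varphi\,[\chi-\Pi^X_{h,n}\chi]=\int_\Omega\nabla(u-\Pi^X_{h,n}u)\cdot\nabla(\chi-\Pi^X_{h,n}\chi)$ and uses the higher-order ($n$-dependent) projection estimate to gain a factor $h^{1+t}$ for $t\le n-1$. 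This gives \eqref{eq:assum_4} with $q^*_{\rm eig}=t$ for any admissible $t\in[0,3/2)\cap[0,\min\{r_v,n-1\}]$.

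The remaining estimate \eqref{eq:assum_4new} needs one short new observation: since both $u^*_{\rm eig}$ and $u^*_{h,n,\rm eig}$ are $L^2(\Omega)$-normalised, one has the identity $\int_\Omega u^*_{\rm eig}\,[u^*_{\rm eig}-u^*_{h,n,\rm eig}]=\tfrac12\Vert u^*_{\rm eig}-u^*_{h,n,\rm eig}\Vert^2_{L^2(\Omega)}$, so that by the $L^2$ and $H^1$ rates of \textbf{Assumption B.1} the left-hand side is $\lsim\Vert u^*_{\rm eig}-u^*_{h,n,\rm eig}\Vert_{L^2(\Omega)}\cdot\Vert u^*_{\rm eig}-u^*_{h,n,\rm eig}\Vert_{L^2(\Omega)}\lsim h^2\Vert u^*_{\rm eig}\Vert_{H^2(\Omega)}\cdot h\Vert u^*_{\rm eig}-u^*_{h,n,\rm eig}\Vert_{H^1(\Omega)}$, which for $h$ small is dominated by $h^{t+1}\Vert u^*_{\rm eig}\Vert_{H^2(\Omega)}\Vert u^*_{\rm eig}-u^*_{h,n,\rm eig}\Vert_{H^1(\Omega)}$ because $t<3/2<2$; hence \eqref{eq:assum_4new} also holds with $q^*_{\rm eig}=t$. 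Thus \textbf{Assumption B.3} is verified for every $t$ in the admissible range (the endpoint $t=0$ being simply a weakening of \eqref{eq:FEM_eig_low}, which remains valid for $n>1$), and Lemma \ref{lem:combine_eig_2} delivers \eqref{eq:FEM_eig_high}. The only genuinely delicate ingredient is the corner-domain elliptic regularity bound for the adjoint problem --- the non-smoothness of $\partial\Omega$ forces the restriction $t<3/2$ --- but this is exactly the point already isolated as Lemma \ref{lem:appendix} and used in Theorem \ref{thm:1a}, so no new obstacle arises.
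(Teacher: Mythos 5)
Your proposal is correct and its overall architecture (verify \textbf{B.1}, \textbf{B.2}, \textbf{B.3}; invoke Lemma \ref{lem:combine_eig_2}) is the same as the paper's, but your treatment of the key estimate \eqref{eq:assum_4new} is genuinely different and noticeably simpler. The paper verifies \eqref{eq:assum_4new} via a second adjoint problem, $\langle (\mathcal{E}_{\rm eig}''[u^*_{\rm eig}]-\lambda^*)v,\vartheta\rangle = (u^*_{\rm eig},v)_{L^2}$, proves $\vartheta\in H^{2+t}(\Omega)$ via the periodic-extension Lemma \ref{lem:appendix}, inserts $\Pi^X_{h,n}$, and then has to estimate a term (II) that involves the eigenvalue error $\lambda^*_{h,n}-\lambda^*$. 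For that term it invokes the negative-norm convergence estimate from \cite[Theorem 3]{cances2010numerical}, which in turn uses duality in $H^{-t}(\Omega)$ for $u^*_{\rm eig}\in H^t(\Omega)\cap H^1_0(\Omega)$ and is the actual source of the restriction $t<3/2$ for \eqref{eq:assum_4new} in the paper (see the footnote in the paper's proof). You instead observe that, because both $u^*_{\rm eig}$ and $u^*_{h,n,\rm eig}$ have unit $L^2$-norm, the exact algebraic identity $(u^*_{\rm eig},u^*_{\rm eig}-u^*_{h,n,\rm eig})_{L^2} = \tfrac12\Vert u^*_{\rm eig}-u^*_{h,n,\rm eig}\Vert^2_{L^2}$ holds, after which two applications of \textbf{Assumption B.1} immediately give a bound of order $h^3\Vert u^*_{\rm eig}\Vert_{H^2(\Omega)}\Vert u^*_{\rm eig}-u^*_{h,n,\rm eig}\Vert_X$, which is even stronger than the required $h^{t+1}$ for every $t<2$. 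This normalisation identity is in fact the same trick the paper uses for the Fourier case in Theorem \ref{thm:3b}; your observation that it works equally well in the finite element setting — without invoking $L^2$-orthogonality of $\Pi^X_{h,n}$ or the eigenvalue error estimate from \cite{cances2010numerical} — is a real simplification, and it decouples the restriction $t<3/2$ from the eigenvalue convergence rate (that restriction still enters through \eqref{eq:assum_4}, where the adjoint problem $-\Delta u = \widetilde W\varphi$ on the corner domain caps the attainable regularity via Lemma \ref{lem:appendix}). Everything else — the verification of \textbf{B.1} via \eqref{eq:rates_2}, of \textbf{B.2} via interpolation plus the $L^\infty$-inverse inequality, of \eqref{eq:assum_4} via the Laplacian adjoint problem and periodic extension, and the observation that $t=0$ reduces to \eqref{eq:FEM_eig_low} — matches the paper's reasoning.
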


\begin{proof}
    Analogously to the proof of Theorem \ref{thm:1a} above, the proof of the present theorem is based on a verification of \textbf{Assumptions B.1, B.2,} and \textbf{B.3} which allow us to apply Lemma \ref{lem:combine_eig_2}. With the exception of Estimate \eqref{eq:assum_4new} in \textbf{Assumption B.3}, these verifications are virtually identical to the ones carried out for \textbf{Assumptions A.1, A.2,} and \textbf{A.3} in the proof of Theorem \ref{thm:1a} so we refrain from repeating the arguments but we refer the interested reader to \cite{cances2010numerical} for the tools necessary for these verifications.

    The verification of Estimate \eqref{eq:assum_4new} in \textbf{Assumption B.3} is similar in spirit to that of Estimate \eqref{eq:assum_4b} in \textbf{Assumption A.3} as we now briefly discuss. To begin with, let us introduce the following adjoint problem: we seek $\vartheta \in X$ that solves the equation 
 \begin{align}\label{eq:adjoint_eigen_FEM}
\forall v \in X\colon \quad \left\langle \big(\mathcal{E}_{\rm eig}''[u^*_{\rm eig}] -\lambda^* \big)v, \vartheta\right\rangle_{X^* \times X}= \big(u^*_{\rm eig}, v\big)_{L^2(\Omega)}.     
 \end{align}
 Since $\mathcal{E}_{\rm eig}''[u^*_{\rm eig}]  \colon X \rightarrow X^*$ is self-adjoint, and we have the coercivity and continuity properties~\eqref{eq:shifted_energy_estimates}, we immediately deduce that the adjoint problem \eqref{eq:adjoint_eigen_FEM}  has a unique solution $\vartheta \in X$.  Moreover, since we have the Sobolev embedding $H^2 (\Omega)\hookrightarrow \mathcal{C}^0(\overline{\Omega})$, we have the inclusion $u^*_{\rm eig}\in H^2(\Omega) \cap X \cap \mathcal{C}^0(\overline{\Omega})$. We can therefore appeal to Lemma \ref{lem:appendix} in the appendix, to deduce from elliptic regularity theory and a bootstrapping argument that, despite $\Omega$ being a corner domain, the solution $\vartheta$ to Equation \eqref{eq:adjoint_eigen} is an element of $H^{2+t}(\Omega)\cap X$ and satisfies the estimate  
\begin{align}\label{eq:FEM_eig*}
 \forall t \in [0, \min\{2, r_v\}] \colon \qquad  \Vert \vartheta \Vert_{H^{2+t}(\Omega)} \lsim \Vert u^*_{\rm eig}\Vert_{H^{t}(\Omega)}
\end{align}
Note that since $u^*_{\rm eig}$ satisfies the elliptic equation 
\begin{align*}
-\Delta u_{\rm eig}^* = \lambda^*u^*_{\rm eig} - V u^*_{\rm eig} -(u^*_{\rm eig})^3    \qquad \text{in } X,
\end{align*}
the same elliptic regularity argument yields that, in fact, $u^*_{\rm eig} \in H^{2+t}(\Omega)$ for every $t \in [0, \min\{2, r_v\}]$ and satisfies a similar estimate as \eqref{eq:FEM_eig*}. This fact will be of use in the sequel.
 
 Returning now to the adjoint equation \eqref{eq:adjoint_eigen_FEM}, we see that the choice of test function $v=u^*_{h, n, \rm eig}-u^*_{\rm eig}$, implies that
     \begin{align}\label{eq:final_FEM_eig_-1}
\left(u^*_{\rm eig}, u^*_{h, n,\rm eig}-u^*_{\rm eig}\right)_{L^2(\Omega)} &=\left\langle \big(\mathcal{E}_{\rm eig}''[u^*_{\rm eig}] -\lambda^* \big)u^*_{h, n,\rm eig}-u^*_{\rm eig}, \vartheta\right\rangle_{X^* \times X}\\ \nonumber
&=\underbrace{\left\langle \big(\mathcal{E}_{\rm eig}''[u^*_{\rm eig}] -\lambda^* \big)u^*_{h, n,\rm eig}-u^*_{\rm eig}, \vartheta-\Pi_{h, n}^X\vartheta \right\rangle_{X^* \times X}}_{:=\rm (I)}\\ \nonumber
&\underbrace{\left\langle \big(\mathcal{E}_{\rm eig}''[u^*_{\rm eig}] -\lambda^* \big)u^*_{h, n,\rm eig}-u^*_{\rm eig}, \Pi_{h, n}^X\vartheta \right\rangle_{X^* \times X}}_{:= \rm (II)}.
 \end{align}

We begin with the term (I). Using the continuity of the shifted second derivative $\mathcal{E}_{\rm eig}''[u^*_{\rm eig}]  -\lambda^*$ together with the higher-order convergence rates (that depend on $n$) satisfied by the $(\cdot, \cdot)_X$-orthogonal projection operator $\Pi_{h, n}^X$ (see, e.g., \cite[Theorem 3.16]{ern2004theory}), we deduce that for any $t \in [0, \min\{2, r_v, n-1\}]$ and all $h>0$ sufficiently small it holds that
\begin{align}\nonumber
    {\rm (I)} \lsim \Vert u^*_{h, n,\rm eig}-u^*_{\rm eig}\Vert_X \Vert \vartheta-\Pi_{h, n}^X\vartheta \Vert_X &\lsim h^{1+t}\Vert u^*_{h, n,\rm eig}-u^*_{\rm eig}\Vert_X \Vert \vartheta \Vert_{H^{2+t}(\Omega)}\\ \label{eq:final_FEM_eig_0}
    &\lsim h^{1+t}\Vert u^*_{h, n,\rm eig}-u^*_{\rm eig}\Vert_X \Vert u^*_{\rm eig}\Vert_{H^{t}(\Omega)}.
\end{align}
Here, the last step follows from Estimate \eqref{eq:FEM_eig*}.

Turning now to the term (II), as argued in the proof of Lemma \ref{lem:combine_eig_2} (see Equation \eqref{eq:discret_eigen}), we have the relation
\begin{equation}\label{eq:discret_eigen_FEM}
\begin{aligned}
{\rm (II)}=\Big \langle \big(\mathcal{E}_{\rm eig}''[u^*_{\rm eig}]-\lambda^*\big)(u^*_{h,n, \rm eig} &- u^*_{\rm eig}), \Pi_{h, n}^X\vartheta\Big\rangle_{X^* \times X}=(\lambda^*_{h, n}-\lambda^*)\int_\Omega u^*_{h,n, \rm eig}(\bold{x}) \Pi_{h, n}^X\vartheta (\bold{x}) \; d\bold{x}\\[0.25em]
 &-\int_\Omega \big(u^*_{h, n, \rm eig}(\bold{x})-u^*_{\rm eig}(\bold{x})\big)^2\big(u^*_{h,n, \rm eig}(\bold{x})+2u^*_{\rm eig}(\bold{x})\big)\Pi_{h, n}^X\vartheta(\bold{x})\; d \bold{x}.
\end{aligned}
\end{equation}

Before proceeding with our analysis of Estimate \eqref{eq:discret_eigen_FEM}, we first study the stability of the $(\cdot, \cdot)_X$ projection operator in the $L^{\infty}(\Omega)$ norm. This can be done using the same arguments used to verify \textbf{Assumption A.2} in the proof of Theorem \ref{thm:1a}. Indeed, recalling again the global interpolation operator $\mathcal{I}_{h, n} \colon \mathcal{C}^0(\overline{\Omega})\rightarrow X_{h, n}$ introduced in the proof of Theorem \ref{thm:1a} and using the fact that $\vartheta \in H^2(\Omega)\cap X$ by hypothesis, we deduce that for all $h >0$ it holds that
\begin{align}\nonumber
    \Vert\Pi_{h, n}^X\varphi\Vert_{L^\infty(\Omega)} &\leq  \Vert  \mathcal{I}_{h, n}\varphi\Vert_{L^\infty(\Omega)} + \Vert \mathcal{I}_{h, n}\varphi- \Pi_{h, n}^X\varphi\Vert_{L^\infty(\Omega)}\\ \nonumber
    &\lsim \Vert \varphi\Vert_{L^\infty(\Omega)} + h^{-1/2}\Vert \mathcal{I}_{h, n}\varphi- \Pi_{h, n}^X \varphi\Vert_{H^1(\Omega)}\\ \nonumber
    &\lsim \Vert \varphi\Vert_{H^2(\Omega)} + h^{-1/2}\Vert \mathcal{I}_{h, n}\varphi- \varphi\Vert_{H^1(\Omega)}+ h^{-1/2}\Vert \varphi- \Pi_{h, n}^X \varphi\Vert_{H^1(\Omega)}\\
    &\lsim \Vert \varphi\Vert_{H^2(\Omega)}
    .
\end{align}

Consequently, simple applications of Hölder's inequality together with Estimate \eqref{eq:FEM_eig*} allow us to deduce from the above Equation \eqref{eq:discret_eigen_FEM} that the term (II) simplifies as
\begin{align}\nonumber
   {\rm (II)}  &\lsim \vert\lambda^*_{h, n}-\lambda^* \vert \Vert  u^*_{h,n, \rm eig}\Vert_{L^2(\Omega)}\Vert \Pi_{h, n}^X\vartheta\Vert_{L^2(\Omega)} + \Vert u^*_{h, n, \rm eig}-u^*_{\rm eig}\Vert^2_{L^2(\Omega)}\Vert \big(u^*_{h,n, \rm eig}+2u^*_{\rm eig}\big)\Pi_{h, n}^X\vartheta\Vert_{L^\infty(\Omega)}\\ \nonumber
   &\lsim \vert\lambda^*_{h, n}-\lambda^* \vert \Vert\vartheta\Vert_{H^2(\Omega)} + \Vert u^*_{h, n, \rm eig}-u^*_{\rm eig}\Vert^2_{L^2(\Omega)}\Vert u^*_{h,n, \rm eig}+2u^*_{\rm eig}\Vert_{L^{\infty}(\Omega)}\Vert \vartheta\Vert_{H^2(\Omega)}\\ \nonumber
   &\lsim \vert\lambda^*_{h, n}-\lambda^* \vert \Vert u^*_{\rm eig}\Vert_{H^2(\Omega)} + \Vert u^*_{h, n, \rm eig}-u^*_{\rm eig}\Vert^2_{L^2(\Omega)}\Vert u^*_{h,n, \rm eig}+2u^*_{\rm eig}\Vert_{L^{\infty}(\Omega)}\Vert u^*_{\rm eig}\Vert_{H^2(\Omega)}\\ \label{eq:final_FEM_eig_1}
   &\lsim \vert\lambda^*_{h, n}-\lambda^* \vert \Vert u^*_{\rm eig}\Vert_{H^2(\Omega)} + \Vert u^*_{h, n, \rm eig}-u^*_{\rm eig}\Vert^2_{L^2(\Omega)}\Vert u^*_{\rm eig}\Vert_{H^2(\Omega)},
\end{align}
where the last step uses the uniform boundedness of $\{u_{h, n, \rm eig}^*\}_{h >0}$ yielded by \textbf{Assumption B.2}. Using the finite element convergence rates stated in \cite{cances2010numerical} (see in particular \cite[Inequality 65, Theorem 3]{cances2010numerical}) and recalling that -- due to elliptic regularity -- $u^*_{\rm eig} \in H^{2+t}(\Omega)$ for every $t \in [0, \min\{2, r_v\}]$, we can further deduce from Estimate \eqref{eq:final_FEM_eig_1} that for all  $t \in [0, 3/2) \cap [0, \min\{2, r_v, n-1\}]$ and all $h>0$ sufficiently small it holds that
\begin{align}\nonumber
    {\rm (II)}  &\lsim  h^{1+t} \Vert u^*_{h, n,\rm eig}-u^*_{\rm eig}\Vert_X\Vert u^*_{\rm eig}\Vert_{H^2(\Omega)} + h^{3+t}\Vert u^*_{h, n, \rm eig}-u^*_{\rm eig}\Vert_{X}\Vert u^*_{\rm eig}\Vert_{H^2(\Omega)}\\
    &\lsim  h^{1+t} \Vert u^*_{h, n,\rm eig}-u^*_{\rm eig}\Vert_X\Vert u^*_{\rm eig}\Vert_{H^2(\Omega)}. \label{eq:final_FEM_eig_2}
\end{align}
Here, the additional restriction that $t\in [0, 3/2)$ arises for technical reasons related to the convergence rate of $\vert \lambda^*_{h, n}-\lambda^*\vert$. We refer to the proof of \cite[Inequality 65, Theorem 3]{cances2010numerical} for details\footnote{Essentially, one uses a duality argument and the fact that $u^*_{\rm eig}\in H^{t}(\Omega)\cap H_0^1(\Omega)$ to bound the error $\vert \lambda^*_{h, n}-\lambda^*\vert$ by the error $u^*_{h, n, \rm eig}-u^*_{\rm eig}$ in the negative-order Sobolev space $H^{-t}(\Omega)$. But $(H^{t}(\Omega)\cap H_0^1(\Omega))'= H^{-t}(\Omega)$ only if $t \in [0, 3/2)$.}.

Combining Estimates \eqref{eq:final_FEM_eig_0} and \eqref{eq:final_FEM_eig_2} with the earlier Equation \eqref{eq:final_FEM_eig_-1} we finally conclude that for all  $t \in [0, 3/2) \cap [0, \min\{r_v, n-1\}]$ and all $h>0$ sufficiently small it holds that
\begin{align*}
\left(u^*_{\rm eig}, u^*_{h, n,\rm eig}-u^*_{\rm eig}\right)_{L^2(\Omega)}\lsim h^{1+t} \Vert u^*_{h, n,\rm eig}-u^*_{\rm eig}\Vert_X\Vert u^*_{\rm eig}\Vert_{H^2(\Omega)}.
 \end{align*}
It therefore follows that Estimate \eqref{eq:assum_4new} in \textbf{Assumption B.3} also holds with any constant $q^*_{\rm src} \in [0, 3/2) \cap [0, \min\{r_v, n-1\}]$.

\end{proof}

\section{Application to Spectral Polynomial Discretisations}\label{sec:5}

As in Section \ref{sec:4}, we assume \textbf{Setting One} throughout the present section so that $X=H^1_0(\Omega)$. Let us now introduce for each $N \in \mathbb{N}$, the set $\mathbb{P}_N(\Omega)$ as the space of restrictions to $\Omega$ of algebraic polynomials of degree $\leq N$ in each of its $d$ variables. We then define the sequence of approximation spaces $\{X_{N}\}_{N \in \mathbb{N}}$ that we consider in this section~as
\begin{align}\label{eq:polynomial_spaces}
\forall N \in \mathbb{N} \colon \quad X_{N}:= \mathbb{P}_N(\Omega) \cap H_0^1(\Omega).
\end{align}
Thus, $N^{-1}$ plays the role of the discretisation parameter $\delta$ from Sections \ref{sec:source} and \ref{sec:eig}.

The main result that we state in this section will require slightly stronger regularity assumptions on the effective potential $V$ and source function $f$, namely, that $V \in H^{r_v}(\Omega)$ for some $r_v \geq2$ and $f \in H^s(\Omega)$ for some $s \geq 1$. While our results can be reformulated to apply also in the lower regularity regime, this introduces needless expositional overhead. Additionally, since spectral methods are typically applied in problems where the Sobolev regularity of the input data is high, we feel that this additional assumption is not unreasonable.

Equipped with this additional regularity assumption, we can now apply Lemmas \ref{lem:combine_1} and \ref{lem:combine_2} to the Galerkin approximation spaces above to obtain the following theorem.

\begin{theorem}\label{thm:2a}
    Assume that \textbf{Setting One} holds and $V \in H^{r_v}(\Omega)$ and $f \in H^s(\Omega)$ for some $r_v \geq2$ and $s \geq 1$ respectively, let $u^*_{\rm src}\in X$ denote the unique minimiser of the continuous energy functional~\eqref{eq:1.1}, let the spectral polynomial approximation spaces $\{X_{N}\}_{N \in \mathbb{N}}$ be defined according to Equation~\eqref{eq:polynomial_spaces}, let $u^*_{N, \rm src}\in X_N, ~ N \in \mathbb{N}$ denote the unique discrete minimiser of the discrete energy functional~\eqref{eq:1.5}, and for every $N \in \mathbb{N}$ let $\Pi^X_N \colon X \rightarrow X_N$ denote the $(\cdot, \cdot)_X$-orthogonal projection operator onto $X_N$. Then $\forall \epsilon> 0$ small enough we have that 
    \begin{align}\label{eq:Poly_src}
      \Vert \Pi_N^X u^*_{\rm src}-u^{*}_{N, \rm src}\Vert_{L^2(\Omega)} +N^{-1+{\epsilon}/2}\Vert \Pi_N^X u^*_{\rm src}-u^{*}_{N, \rm src}\Vert_{H^1(\Omega)}&\lsim N^{-3 + \epsilon}\Vert \Pi_N^X u^*_{\rm src}- u^{*}_{\rm src}\Vert_{H^{1}(\Omega)}.
    \end{align}
\end{theorem}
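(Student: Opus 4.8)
The plan is to verify \textbf{Assumptions A.1}, \textbf{A.2} and \textbf{A.3} for the spectral polynomial spaces $\{X_N\}_{N\in\mathbb{N}}$ from \eqref{eq:polynomial_spaces} — identifying the abstract discretisation parameter as $\delta=N^{-1}$ — and then to apply Lemmas \ref{lem:combine_1} and \ref{lem:combine_2}. Since \eqref{eq:source_higher_rates} reads $\Vert\Pi_\delta^X u^*_{\rm src}-u_{\delta,\rm src}^*\Vert_{L^2(\Omega)}+\delta^{q^*_{\rm src}/2}\Vert\Pi_\delta^X u^*_{\rm src}-u_{\delta,\rm src}^*\Vert_X\lsim\delta^{q^*_{\rm src}+1}\Vert\Pi_\delta^X u^*_{\rm src}-u^*_{\rm src}\Vert_X$, it will be enough to show that \textbf{A.3} holds with $q^*_{\rm src}$ arbitrarily close to $2$; then $\delta^{q^*_{\rm src}+1}=N^{-3+\epsilon}$ and $\delta^{q^*_{\rm src}/2}=N^{-1+\epsilon/2}$, which is precisely \eqref{eq:Poly_src}. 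Throughout we use the elliptic regularity estimate \eqref{eq:elliptic_regularity}, which under the present hypotheses ($s\geq1$, $r_v\geq2$) gives $u^*_{\rm src}\in H^{t_{\rm src}+2}(\Omega)$ with $t_{\rm src}=\min\{s,r_v,3-\epsilon\}\geq1$, so in particular $u^*_{\rm src}\in H^3(\Omega)$.

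The verification of \textbf{A.1} is routine: \eqref{eq:assum_1a} follows from classical spectral approximation theory on the cube together with a nonlinear C\'ea/duality argument for the discrete minimiser (as in \cite{cances2010numerical}), yielding $r_{\rm src}=t_{\rm src}+2\geq3$; \eqref{eq:assum_1b} is the standard $H^1_0$-projection error estimate on the cube, with the $L^2$ half supplied by the Aubin--Nitsche trick, legitimate since $\Omega$ is convex so that the dual problem enjoys $H^2$-regularity. For \textbf{A.2} we use Remark \ref{rem:inverse}: here $X_N=\mathbb{P}_N(\Omega)\cap H^1_0(\Omega)\subset H^2(\Omega)$ consists of smooth functions, a suitable optimal-rate operator $\Pi^2_\delta$ satisfying \eqref{eq:assum_3primeprime} with $s^*_{\rm src}=t_{\rm src}+2$ exists by spectral approximation theory, and the polynomial inverse inequality on the cube provides \eqref{eq:assum_3prime} with $\theta(p)=2(p-1)$; choosing $p^*\in(d/2,2)$ close to $\max\{1,d/2\}$ (possible since $d\leq3$) one gets $\theta(p^*)+1<3\leq\min\{s^*_{\rm src},r_{\rm src}\}$, as required.

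The heart of the proof is \textbf{A.3}. Fix $\varphi\in H^2(\Omega)\cap X$ and $\chi\in X$. As in the proof of Theorem \ref{thm:1a}, introduce the adjoint problem of finding $u\in X$ with $\int_\Omega\nabla v\cdot\nabla u\,d\mathbf{x}=\int_\Omega\widetilde V_N\varphi\,v\,d\mathbf{x}$ for all $v\in X$; then, using the $(\cdot,\cdot)_X$-orthogonality of $\Pi^X_N$, $\int_\Omega\widetilde V_N\varphi\,[\chi-\Pi^X_N\chi]\,d\mathbf{x}=\int_\Omega\nabla(u-\Pi^X_N u)\cdot\nabla(\chi-\Pi^X_N\chi)\,d\mathbf{x}\leq\Vert u-\Pi^X_N u\Vert_X\Vert\chi-\Pi^X_N\chi\Vert_X$. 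It therefore suffices to prove (i) $\{\widetilde V_N\varphi\}_N$ is bounded in $H^{2-\epsilon}(\Omega)\cap H^1_0(\Omega)\cap\mathcal{C}^0(\overline\Omega)$ with $\Vert\widetilde V_N\varphi\Vert_{H^{2-\epsilon}(\Omega)}\lsim\Vert\varphi\Vert_{H^2(\Omega)}$, and (ii) the adjoint solution satisfies $u\in H^{4-\epsilon}(\Omega)\cap X$ with $\Vert u\Vert_{H^{4-\epsilon}(\Omega)}\lsim\Vert\widetilde V_N\varphi\Vert_{H^{2-\epsilon}(\Omega)}$, for then the higher-order spectral projection estimate $\Vert u-\Pi^X_N u\Vert_X\lsim N^{-3+\epsilon}\Vert u\Vert_{H^{4-\epsilon}(\Omega)}$ delivers \eqref{eq:assum_4b} with $q^*_{\rm src}=2-\epsilon$. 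For (i): $V\in H^{r_v}(\Omega)\subset H^{2-\epsilon}(\Omega)$ since $r_v\geq2$; $u^*_{\rm src}\in H^3(\Omega)$; and $\{u^*_{N,\rm src}\}_N$ is uniformly bounded in $H^{2-\epsilon}(\Omega)$ by a short computation combining the polynomial inverse inequality with \eqref{eq:assum_1a}--\eqref{eq:assum_1b} (the analogue of \textbf{Claim 2} in the proof of Theorem \ref{thm:1a}, now up to order $2-\epsilon$ rather than $3/2-\epsilon$, since the $u^*_{N,\rm src}$ are genuine polynomials); hence $\widetilde V_N\in H^{2-\epsilon}(\Omega)$ uniformly, and the product estimate \eqref{eq:sobolev_product} together with Lemma \ref{lem:combine_1} and the embedding $H^2(\Omega)\hookrightarrow L^\infty(\Omega)$ (valid for $d\leq3$) yields the claim; finally $\varphi\in H^1_0(\Omega)$ forces $\widetilde V_N\varphi$ to have zero trace, hence to vanish on $\partial\Omega$ and in particular at the corners of $\Omega$. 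For (ii) we appeal to fractional-order elliptic regularity on the corner domain $\Omega$ (cf. Lemma \ref{lem:appendix}): the vanishing of $\widetilde V_N\varphi$ at the corners removes the leading $r^{2}\log r$ corner singularity, so that $u$ inherits the regularity $H^{4-\epsilon}(\Omega)$ from its data $\widetilde V_N\varphi\in H^{2-\epsilon}(\Omega)$, with the stated bound. Having verified \textbf{A.1}, \textbf{A.2} and \textbf{A.3} with $q^*_{\rm src}=2-\epsilon$, Lemmas \ref{lem:combine_1} and \ref{lem:combine_2} complete the proof.

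The main obstacle, and the source of the $\epsilon$, is controlling the unavoidable losses: the $r^{2j}\log r$ corner singularities of the Dirichlet Laplacian on the box $(-1,1)^d$, the $N^{\epsilon}$-type loss incurred in multivariate polynomial approximation of functions of \emph{isotropic} Sobolev regularity when $d\geq2$, and the borderline nature of the $H^{2}$-boundedness of $\{u^*_{N,\rm src}\}_N$ when $t_{\rm src}$ is close to $1$. It is for these reasons that the statement is phrased with an arbitrarily small $\epsilon>0$ in place of the round value $q^*_{\rm src}=2$.
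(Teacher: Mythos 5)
Your proof is correct and follows essentially the same route as the paper's: you verify \textbf{A.1}, \textbf{A.2} via Remark \ref{rem:inverse} and the polynomial inverse inequality $\theta(p)=2(p-1)$, and \textbf{A.3} via the Aubin--Nitsche dual argument combined with the regularity gain furnished by Lemma \ref{lem:appendix}, finally invoking Lemma \ref{lem:combine_2} with $q^*_{\rm src}=2-\epsilon$. The only point worth flagging is cosmetic: the operative mechanism in Lemma \ref{lem:appendix} is the vanishing of $\widetilde V_N\varphi$ on the whole of $\partial\Omega$ (guaranteed by $\varphi\in H^1_0(\Omega)$ and $\widetilde V_N\in L^\infty(\Omega)$), which permits the antisymmetric periodic extension; attributing the gain solely to vanishing at the corners is a slight mischaracterisation, though you do separately note the zero trace. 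Likewise, the dominant source of the $\epsilon$-loss in the paper is the factor $N^{2(p-1)}$ in the polynomial inverse inequality, which caps the uniform $H^{p^*}$-bound on $\{u^*_{N,\rm src}\}_N$ at $p^*<2$; the corner singularity and multivariate-approximation losses you also cite are not what actually drives the $\epsilon$ in the paper's argument.
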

\begin{proof}
Similar to the proof of Theorem \ref{thm:1a}, the proof of the present theorem is  based on a verification of \textbf{Assumptions A.1, A.2,} and \textbf{A.3} followed by an application of Lemmas \ref{lem:combine_1} and \ref{lem:combine_2}.

    Turning first to \textbf{Assumption A.1}, we recall that it follows from standard elliptic regularity results on polygonal and polyhedral domains (see, e.g., \cite[Remark 2.1]{MR1470226} for the case $d=2$ and \cite[Chapter 6, Section 18.F]{MR0961439} for the case $d=3$), that $u^*_{\rm src} \in H^{3-\epsilon}(\Omega)$ for every $\epsilon >0$. Consequently, we may apply well-known convergence estimates for spectral polynomial methods to deduce that the sought-after estimate \eqref{eq:assum_1a} holds for any $r_{\rm src}= 3-\epsilon$ with $\epsilon >0$ sufficiently small\footnote{In the case $d=1$, we can take $r_{\rm src}=3$ since $\Omega= (-1, 1)$ has no corners and therefore $u^*_{\rm src} \in H^{3}(\Omega)$. Note also that, if the source function $f$ vanishes on the boundary, then the first-order singularity in the solution is skipped and the solution $u^{*}_{\rm src}$ is more regular (see, e.g., Lemma \ref{lem:appendix} in the appendix.)}. It is also known (see, e.g., \cite[Theorems 7.2 and 8.3]{MR1470226}) that the orthogonal projection operator $\Pi_N^X\colon X \rightarrow X_N, ~ N \in \mathbb{N}$ satisfies the estimate \eqref{eq:assum_1b}.

Next, we focus on the verification of \textbf{Assumption A.2}. To do so, we will follow the strategy outlined in Remark \ref{rem:inverse} and establish a stronger result, namely that
\begin{align}\label{eq:revision_hassan_new_verify_spectral}
\exists p^* \in (d/2, 2), ~ \exists C>0, ~\forall N\in \mathbb{N} \colon \qquad \Vert u^*_{N, \rm src} \Vert_{H^{p^*}(\Omega)} \leq C.
\end{align}
As argued in Remark \ref{rem:inverse}, thanks to the Sobolev embedding theorem, Estimate~\eqref{eq:revision_hassan_new_verify_spectral} will imply the uniform $L^\infty(\Omega)$-boundedess of the discrete minimisers $\{u^*_{N, \rm src}\}_{N \in \mathbb{N}}$ as required by \textbf{Assumption A.2}. Additionally, Estimate \eqref{eq:revision_hassan_new_verify_spectral} will also be useful in the verification of \textbf{Assumption A.3}.

In view of Remark \ref{rem:inverse}, to prove Estimate \eqref{eq:revision_hassan_new_verify_spectral}, it suffices to establish that

 \begin{enumerate} 
 
 \item There exists $ s^*_{\rm src}\geq2$ such that for every $N \in \mathbb{N}$ sufficiently small there exists an operator $\Pi_N^{2} \colon H^2(\Omega)\cap X \rightarrow X_N$ such that for all real numbers $0\leq t \leq 2$ and any $v \in H^{s^*_{\rm src}}(\Omega) \cap X$ it holds that
        \begin{align}\label{eq:assum_3primeprime_new}
            \Vert v - \Pi_N^{2} v \Vert_{H^{t}(\Omega)} \lsim N^{t-s^*_{\rm src}} \Vert v \Vert_{H^{s^*_{\rm src}}(\Omega)};
        \end{align}

        \item There exists a mapping $\theta \colon [1, +\infty) \rightarrow \mathbb{R}$ such that 
    \begin{equation}\label{eq:assum_3prime_new}
    \begin{split}
        \forall p \geq 1, ~ \forall v_N \in X_N \cap H^{p}(\Omega)\colon& \qquad \Vert v_N \Vert_{H^{p}(\Omega)} \lsim N^{\theta(p)} \Vert  v_N \Vert_{H^1(\Omega)} \qquad \text{and}\\
\exists p^* \in (d/2, 2)\colon& \qquad \theta(p^*)+1- \min\{s^*_{\rm src}, r_{\rm src}\}< 0.
    \end{split}
    \end{equation}

\end{enumerate}

To verify the first condition above, we note that according to \cite[Theorem 7.4]{MR1470226}, there exists an operator $\widetilde{\Pi}^{2, 1, 0}_N\colon H^2(\Omega)\cap X \rightarrow X_N$ with the property that for all $0\leq t \leq 2 \leq s$ and all $v \in  H^s(\Omega)\cap X$ it holds that
\begin{align}\label{eq:2.1*}
    \Vert v - \widetilde{\Pi}^{2, 1, 0}_N v\Vert_{H^{t}(\Omega)} \lsim N^{t-s} \Vert v\Vert_{H^{s}(\Omega)}.
\end{align}
Consequently, Estimate \eqref{eq:assum_3primeprime_new} holds with $s^*_{\rm src}= r^*_{\rm src} > 2$ and $ \Pi_N^{2}= \widetilde{\Pi}^{2, 1, 0}_N$. 

Turning now to Estimate \eqref{eq:assum_3prime_new}, we recall that the spectral polynomial approximation spaces $\{X_N\}_{N \in \mathbb{N}}$ are known to satisfy the inverse inequality
\begin{align}\label{eq:2.3*}
 \forall p\in [1, \infty), \forall v_N \in X_N \colon \qquad   \Vert v_N\Vert_{H^{p}(\Omega)} \lsim N^{2(p-1)}\Vert v_N\Vert_{H^1(\Omega)}.
\end{align}
A proof of this inverse inequality for the case $d=1$, i.e., when $\Omega=(-1, 1)$ can be found in \cite[Theorem 5.2]{MR1470226}. Since we assume that $\Omega$ is a tensorised domain of the form $\Omega=(-1, 1)^d$, the proof of Inequality \eqref{eq:2.3*} for the cases $d=2$ and $d=3$ follow as a straightforward consequence of the one-dimensional result. Taking therefore $\theta (p)= 2(p-1)$, recalling that $s^*_{\rm src}= r^*_{\rm src}$ and $r^*_{\rm src}$ can be taken as $r^*_{\rm src}=3-\epsilon$ for any $\epsilon>0$ sufficiently small, we see that Estimate \eqref{eq:assum_3prime_new} holds for any $p^* \in (d/2, 2)$. Thus, the uniform bound \eqref{eq:revision_hassan_new_verify_spectral} and therefore \textbf{Assumption A.2} holds.

Let us now turn to the verification of \textbf{Assumption A.3}. For each $N \in \mathbb{N}$ let the auxiliary potential function $\widetilde{V}_N \colon \Omega \rightarrow \mathbb{R}$ be given by (cf. Definition \ref{def:v_aux})
\begin{align*}
    \widetilde{V}_N:= V + \left(u^*_{\rm src}\right)^2 + \left(u^N_{\rm src}\right)^2 + u^*_{\rm src}u^N_{\rm src} -1. 
\end{align*}
We are now interested in bounding, for any $N \in \mathbb{N}$ sufficiently large, all $\varphi\in H^2(\Omega)\cap X$ and all $\chi \in X$ the expression
    \begin{align*}
    \int_\Omega \widetilde{V}_N({\bf x})  \varphi({\bf x}) [\chi- \Pi^X_N \chi]({\bf x})\; d{\bf{x}}.
\end{align*}
To do so, we will follow essentially the same approach as that taken in the proof of Theorem \ref{thm:1a}. To start with, we claim that
\begin{align}\label{eq:spectral_extension}
\forall N \in \mathbb{N}\colon \qquad \widetilde{V}_N \phi \in H^2(\Omega)\cap X \cap \mathcal{C}^0(\overline{\Omega})
\end{align}
Indeed, this follows directly from the Sobolev embedding $H^2(\Omega) \hookrightarrow \mathcal{C}^0(\overline{\Omega})$ together with the fact that the effective potential $V \in H^{r_v}(\Omega)$ for some $r_v \geq 2$.

Next, we introduce, for each $N \in \mathbb{N}$, the following adjoint problem: we seek $ u \in X $ such that for all $v \in X$ it holds that
	\begin{equation}\label{def:adjoint_4}
	\int_\Omega  \nabla v({\bf{x}}) \cdot \nabla u ({\bf{x}})\; d {\bf{x}} = \int_\Omega \widetilde{V}_N({\bf{x}}) \varphi({\bf{x}}) v({\bf{x}}) d{\bf{x}}.
	\end{equation}	

Clearly Equation~\eqref{def:adjoint_4} has a unique solution $u \in X$, and thanks to Estimate \eqref{eq:spectral_extension}, we can appeal to Lemma \ref{lem:appendix} in the appendix to deduce from elliptic regularity theory that this solution $u\in H^4(\Omega)$ and satisfies for any $s \in [0, 2] $, the estimate
\begin{align}\label{eq:2.5*}
    \Vert u \Vert_{H^{2+s}(\Omega)} \lsim \Vert \widetilde{V}_{N} \varphi\Vert_{H^{s}(\Omega)}.
\end{align}

Plugging in the test function $v = \chi - \Pi_N^X \chi$ in the adjoint problem \eqref{def:adjoint_4}, we now deduce that for all $N \in \mathbb{N}$ and all $s \in (d/2, 2)$, it holds that
\begin{align}\nonumber
    \int_\Omega \widetilde{V}_N({\bf x})  \varphi({\bf{x}})[\chi-\Pi_N^{X}\chi]({\bf x})\; d{\bf{x}} &= \int_\Omega \nabla u ({\bf{x}}) \cdot \nabla [\chi-\Pi_N^{X}\chi]({\bf x})\; d {\bf{x}}\\ \nonumber
    &=\int_\Omega \nabla[u- \Pi_N^X u] ({\bf{x}}) \cdot  \nabla [\chi-\Pi_N^{X}\chi]({\bf x}) \; d {\bf{x}}\\ \nonumber
&\leq \Vert u- \Pi^X_N u\Vert_{H^1(\Omega)} \Vert  \chi-\Pi_N^{X}\chi\Vert_{H^1(\Omega)}\\ \nonumber
&\lsim N^{-s-1}\Vert u\Vert_{H^{2+s}(\Omega)} \Vert  \chi-\Pi_N^{X}\chi\Vert_{H^1(\Omega)}\\ \nonumber
&\lsim N^{-s-1}\Vert \widetilde{V}_N \varphi\Vert_{H^{s}(\Omega)} \Vert  \chi-\Pi_N^{X}\chi\Vert_{H^1(\Omega)}\\ \nonumber
& \lsim N^{-s-1}\Vert \widetilde{V}_N\Vert_{H^{s}(\Omega)} \Vert \varphi\Vert_{H^{2}(\Omega)} \Vert  \chi-\Pi_N^{X}\chi\Vert_{H^1(\Omega)}.
\end{align}
Here, we have used the higher-order convergece rates enjoyed by the $(\cdot, \cdot)_X$-orthogonal projection operator $\Pi_N^X$ in the spectral polynomial approximation spaces $\{X_N\}_{N \in \mathbb{N}}$  (see, e.g., \cite[Theorems 7.2 and 8.3]{MR1470226}), and the last step follows from the fact that $H^{s}(\Omega), ~ s\in (d/2, 2)$ is a Banach algebra and we have the continuous embedding $H^2(\Omega) \hookrightarrow H^{s}(\Omega)$.

Finally notice that by assumption, the effective potential $V \in H^{r_v}(\Omega)$ for some $r_v \geq 2$ so that  Estimate \eqref{eq:elliptic_regularity} and \eqref{eq:revision_hassan_new_verify_spectral} together imply that $\{\widetilde{V}_N\}_{N \in \mathbb{N}}$ is uniformly bounded in $H^{s}(\Omega)$ for every $s \in (d/2, 2)$. It therefore follows that \textbf{Assumption A.3} also holds with any constant $q^*_{\rm src} \in (d/2, 2)$. In particular, we can apply Lemma \ref{lem:combine_2} with $q^*_{\rm src}=2-\epsilon$ for any $\epsilon >0$ to conclude.
\end{proof}

In a similar fashion, the application of Lemma \ref{lem:combine_eig_2} to the present choice of spectral polynomial Galerkin approximation spaces results in the following theorem.
\begin{theorem}\label{thm:2b}
    Assume that \textbf{Setting One} holds and $V \in H^{r_v}(\Omega)$ for some $r_v \geq2$, let $u^*_{\rm eig}\in X$ denote the unique positive minimiser of the continuous energy functional \eqref{eq:1.1}, let the spectral polynomial approximation spaces $\{X_{N}\}_{N \in \mathbb{N}}$ be defined according to Equation \eqref{eq:polynomial_spaces}, let $u^*_{N, \rm eig}\in X_N, ~ N \in \mathbb{N}$ denote a sequence of discrete minimisers of the discrete energy functional \eqref{eq:1.5} that satisfy $\big(u^*_{N, \rm eig}, u^*_{\rm eig}\big)_{L^2(\Omega)}\geq 0$, and for every $N \in \mathbb{N}$ let $\Pi^X_N \colon X \rightarrow X_N$ denote the $(\cdot, \cdot)_X$-orthogonal projection operator onto $X_N$. Then we have the error estimate
    \begin{align}\label{eq:Poly_eig}
      \Vert \Pi_N^X u^*_{\rm eig}-u^{*}_{N, \rm eig}\Vert_{L^2(\Omega)} +N^{-1}\Vert \Pi_N^X u^*_{\rm eig}-u^{*}_{N, \rm eig}\Vert_{H^1(\Omega)}&\lsim N^{-3}\Vert \Pi_N^X u^*_{\rm eig}-u^{*}_{\rm eig}\Vert_{H^{1}(\Omega)}.
    \end{align}
\end{theorem}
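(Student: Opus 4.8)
The plan is to follow the template of Theorems~\ref{thm:1b} and~\ref{thm:2a}: we identify the abstract discretisation parameter $\delta$ with $N^{-1}$, verify \textbf{Assumptions B.1}, \textbf{B.2}, and \textbf{B.3} for the spectral polynomial spaces $\{X_N\}_{N \in \mathbb{N}}$ from Equation~\eqref{eq:polynomial_spaces}, and then invoke Lemma~\ref{lem:combine_eig_2}. The decisive feature of the present setting is that \textbf{Assumption B.3} can be established with the \emph{optimal} constant $q^*_{\rm eig}=2$, so that Estimate~\eqref{eq:eig_higher_rates} collapses to precisely the claimed bound~\eqref{eq:Poly_eig}; this is in contrast to the source problem (Theorem~\ref{thm:2a}), where the auxiliary potential $\widetilde V_N$ involves the discrete minimiser and is therefore only uniformly bounded in $H^s(\Omega)$ for $s<2$, costing an $\epsilon$ in the rate.

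The verifications of \textbf{Assumptions B.1} and \textbf{B.2} are routine. Since $\Omega=(-1,1)^d$ and $u^*_{\rm eig}$ satisfies $-\Delta u^*_{\rm eig}=\lambda^* u^*_{\rm eig}-Vu^*_{\rm eig}-(u^*_{\rm eig})^3$ with right-hand side vanishing on $\partial\Omega$, elliptic regularity on polygonal/polyhedral domains (the leading corner singularity being skipped by the homogeneous Dirichlet data), a bootstrapping argument, and $r_v\geq 2$ give $u^*_{\rm eig}\in H^{t_{\rm eig}+2}(\Omega)$ with $t_{\rm eig}=\min\{r_v,3-\epsilon\}\geq 2$; in particular $u^*_{\rm eig}\in H^4(\Omega)$, and the standard spectral convergence estimates (see, e.g.,~\cite{MR1470226,cances2010numerical}) then furnish~\eqref{eq:assum_eig_1a} and~\eqref{eq:assum_eig_1b}. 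For \textbf{Assumption B.2} we reuse the argument of Theorem~\ref{thm:2a} based on Remark~\ref{rem:inverse}: combining the optimal-rate operator $\widetilde\Pi^{2,1,0}_N$, the spectral inverse inequality $\Vert v_N\Vert_{H^p(\Omega)}\lsim N^{2(p-1)}\Vert v_N\Vert_{H^1(\Omega)}$, and the eigenvalue convergence rates of~\cite{cances2010numerical} yields uniform boundedness of $\{u^*_{N,\rm eig}\}_{N\in\mathbb{N}}$ in $H^{p^*}(\Omega)$ for some $p^*\in(d/2,2)$, hence in $L^\infty(\Omega)$ by Sobolev embedding.

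The heart of the matter is \textbf{Assumption B.3}. For Estimate~\eqref{eq:assum_4} with a $\delta$-independent generic potential $\widetilde W\in H^{r_v}(\Omega)$, $r_v\geq 2$, and $\varphi\in H^2(\Omega)\cap X$, I would introduce the Poisson adjoint problem: seek $u\in X$ with $\int_\Omega\nabla v\cdot\nabla u\,d{\bf x}=\int_\Omega\widetilde W\varphi v\,d{\bf x}$ for all $v\in X$. Since $H^2(\Omega)$ is a Banach algebra for $d\leq 3$ and $H^{r_v}(\Omega)\hookrightarrow H^2(\Omega)$, the product $\widetilde W\varphi$ lies in $H^2(\Omega)\cap H_0^1(\Omega)\cap\mathcal{C}^0(\overline\Omega)$ with $\Vert\widetilde W\varphi\Vert_{H^2(\Omega)}\lsim\Vert\widetilde W\Vert_{H^2(\Omega)}\Vert\varphi\Vert_{H^2(\Omega)}$; by Lemma~\ref{lem:appendix} (exploiting again that $\widetilde W\varphi$ vanishes on $\partial\Omega$) the adjoint solution satisfies $u\in H^4(\Omega)\cap X$ with $\Vert u\Vert_{H^4(\Omega)}\lsim\Vert\widetilde W\varphi\Vert_{H^2(\Omega)}$. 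Testing the adjoint problem with $v=\chi-\Pi^X_N\chi$, using Galerkin orthogonality and the higher-order spectral approximation rates of $\Pi^X_N$, gives $\int_\Omega\widetilde W\varphi[\chi-\Pi^X_N\chi]\,d{\bf x}=\int_\Omega\nabla(u-\Pi^X_N u)\cdot\nabla(\chi-\Pi^X_N\chi)\,d{\bf x}\lsim N^{-3}\Vert u\Vert_{H^4(\Omega)}\Vert\chi-\Pi^X_N\chi\Vert_X\lsim N^{-3}\Vert\widetilde W\Vert_{H^2(\Omega)}\Vert\varphi\Vert_{H^2(\Omega)}\Vert\chi-\Pi^X_N\chi\Vert_X$, i.e.~\eqref{eq:assum_4} with $q^*_{\rm eig}=2$. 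For Estimate~\eqref{eq:assum_4new}, I would mimic the corresponding step in Theorem~\ref{thm:1b}: introduce the adjoint problem $\langle(\mathcal{E}_{\rm eig}''[u^*_{\rm eig}]-\lambda^*)v,\vartheta\rangle_{X^*\times X}=(u^*_{\rm eig},v)_{L^2(\Omega)}$ on $X$, which by~\eqref{eq:shifted_energy_estimates} and Lemma~\ref{lem:appendix} (together with $u^*_{\rm eig}\in H^4(\Omega)$) has a unique solution $\vartheta\in H^4(\Omega)\cap X$ with $\Vert\vartheta\Vert_{H^4(\Omega)}\lsim\Vert u^*_{\rm eig}\Vert_{H^2(\Omega)}$. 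Taking $v=u^*_{N,\rm eig}-u^*_{\rm eig}$ and splitting $(u^*_{\rm eig},u^*_{N,\rm eig}-u^*_{\rm eig})_{L^2(\Omega)}$ via $\vartheta=(\vartheta-\Pi^X_N\vartheta)+\Pi^X_N\vartheta$, the first piece is handled by continuity of $\mathcal{E}_{\rm eig}''[u^*_{\rm eig}]-\lambda^*$ and $\Vert\vartheta-\Pi^X_N\vartheta\Vert_X\lsim N^{-3}\Vert\vartheta\Vert_{H^4(\Omega)}$, while the second piece is treated as in Theorem~\ref{thm:1b} using the identity~\eqref{eq:discret_eigen}, the $L^\infty$-in-$H^2$ stability of $\Pi^X_N$, the uniform $L^\infty$-bound from \textbf{Assumption B.2}, the $L^2$ rate from \textbf{Assumption B.1}, and the sharp spectral eigenvalue-error estimate $|\lambda^*_N-\lambda^*|\lsim\Vert u^*_{N,\rm eig}-u^*_{\rm eig}\Vert_X^2$ (the spectral analogue of~\cite[Theorem 3]{cances2010numerical}); crucially, the $t<3/2$ ceiling that appears in Theorem~\ref{thm:1b} is absent here because polynomials are smooth and $u^*_{\rm eig}\in H^4(\Omega)$, so $\Vert u^*_{N,\rm eig}-u^*_{\rm eig}\Vert_X\lsim N^{-3}$. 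Both pieces are then $\lsim N^{-3}\Vert u^*_{N,\rm eig}-u^*_{\rm eig}\Vert_X\Vert u^*_{\rm eig}\Vert_{H^2(\Omega)}$, which is~\eqref{eq:assum_4new} with $q^*_{\rm eig}=2$.

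With \textbf{Assumptions B.1}, \textbf{B.2}, \textbf{B.3} verified and $q^*_{\rm eig}=2\in(0,2]$, Lemma~\ref{lem:combine_eig_2} yields $\Vert\Pi^X_N u^*_{\rm eig}-u^*_{N,\rm eig}\Vert_{L^2(\Omega)}+N^{-1}\Vert\Pi^X_N u^*_{\rm eig}-u^*_{N,\rm eig}\Vert_{H^1(\Omega)}\lsim N^{-3}\Vert\Pi^X_N u^*_{\rm eig}-u^*_{\rm eig}\Vert_{H^1(\Omega)}$, which is exactly~\eqref{eq:Poly_eig}. The step I expect to be the main obstacle is the verification of~\eqref{eq:assum_4new}: one must pin down the sharp spectral convergence rate of $|\lambda^*_N-\lambda^*|$ (rather than the crude bound~\eqref{eq:eigenvalue_convergence}, which would lose an order), and confirm that both $u^*_{\rm eig}$ and the adjoint solution genuinely attain $H^4(\Omega)$-regularity on the corner domain $\Omega=(-1,1)^d$ — this hinges entirely on the homogeneous Dirichlet boundary values of the relevant data, which skip the first corner singularity and thereby remove the $\epsilon$-loss present in the source problem.
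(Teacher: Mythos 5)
Your proposal follows the paper's intended structure almost exactly: identify $\delta=N^{-1}$, verify \textbf{Assumptions B.1}, \textbf{B.2}, \textbf{B.3} by recycling the verifications from Theorems~\ref{thm:2a} and~\ref{thm:1b}, and apply Lemma~\ref{lem:combine_eig_2}. You also correctly single out the observation the paper itself highlights as the source of the $\epsilon$-improvement over Theorem~\ref{thm:2a}: since $-\Delta u^*_{\rm eig}=\lambda^*u^*_{\rm eig}-Vu^*_{\rm eig}-(u^*_{\rm eig})^3$ with right-hand side vanishing on $\partial\Omega$, Lemma~\ref{lem:appendix} skips the leading corner singularity and yields $u^*_{\rm eig}\in H^4(\Omega)$, hence $H^2$-uniform boundedness of the discrete minimisers. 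Your verification of Estimate~\eqref{eq:assum_4} via the Poisson adjoint problem, the Banach-algebra property of $H^2(\Omega)$ and Lemma~\ref{lem:appendix} is the same argument used in Theorem~\ref{thm:2a}, now exploiting that the generic $\widetilde W$ is $\delta$-independent.

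The one step that is not adequately justified is your treatment of Estimate~\eqref{eq:assum_4new}. You mimic the adjoint-problem argument of Theorem~\ref{thm:1b} and dispose of the $t<3/2$ ceiling on the ground that ``polynomials are smooth''. But the paper's own footnote attributes that ceiling to a duality constraint --- namely that $(H^t(\Omega)\cap H_0^1(\Omega))'=H^{-t}(\Omega)$ only for $t<3/2$ --- which concerns the exact eigenfunction and its boundary conditions, not the regularity of the approximation space; it is not obviously removed by passing from $\mathcal{P}_n$ finite elements to global polynomials. Moreover, your invoked ``sharp spectral eigenvalue-error estimate $|\lambda^*_N-\lambda^*|\lsim\Vert u^*_{N,\rm eig}-u^*_{\rm eig}\Vert_X^2$'' is strictly stronger than the paper's own Estimate~\eqref{eq:eigenvalue_convergence}, which carries an additional $\Vert u^*_{\rm eig}-u^*_{N,\rm eig}\Vert_{L^2(\Omega)}$ term, and you give no argument for why that term can be dropped in the spectral polynomial case. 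This step is the crux of attaining $q^*_{\rm eig}=2$ rather than $q^*_{\rm eig}<3/2$ and deserves an explicit proof. Note, incidentally, that there is a much cleaner route available that bypasses the eigenvalue error altogether: since $\Vert u^*_{\rm eig}\Vert_{L^2(\Omega)}=\Vert u^*_{N,\rm eig}\Vert_{L^2(\Omega)}=1$, the elementary identity
\begin{equation*}
\int_\Omega u^*_{\rm eig}\big[u^*_{\rm eig}-u^*_{N,\rm eig}\big]\,d\bold{x}=\frac{1}{2}\Vert u^*_{\rm eig}-u^*_{N,\rm eig}\Vert_{L^2(\Omega)}^2
\end{equation*}
holds, and combining it with the Aubin--Nitsche bound $\Vert u^*_{\rm eig}-u^*_{N,\rm eig}\Vert_{L^2(\Omega)}\lsim N^{-1}\Vert u^*_{\rm eig}-u^*_{N,\rm eig}\Vert_X$ and $\Vert u^*_{\rm eig}-u^*_{N,\rm eig}\Vert_X\lsim N^{-1}\Vert u^*_{\rm eig}\Vert_{H^2(\Omega)}$ from \textbf{Assumption B.1} immediately gives Estimate~\eqref{eq:assum_4new} with $q^*_{\rm eig}=2$, no adjoint problem and no sharp eigenvalue estimate required. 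This is essentially the identity used in the Fourier proof of Theorem~\ref{thm:3b}, and it works here too since it uses only the normalisation, not the $L^2$-orthogonality of $\Pi^X_N$.
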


\begin{proof}
    Analogously to the proof of Theorem \ref{thm:2a} above, the proof of the present theorem is based on a verification of \textbf{Assumptions B.1, B.2,} and \textbf{B.3} which allow us to apply Lemma \ref{lem:combine_eig_2}. With the exception of Estimate \ref{eq:assum_4new} in \textbf{Assumption B.3}, these verifications are virtually identical to the ones carried out for \textbf{Assumptions A.1, A.2,} and \textbf{A.3} in the proof of Theorem \ref{thm:2a}. Additionally, the verification of Estimate \ref{eq:assum_4new} in \textbf{Assumption B.3} in the present spectral polynomial case is very similar to that carried out for the finite element case in the proof of Theorem \ref{thm:1b} with some obvious modifications. For the sake of brevity, we desist from repeating these arguments. 
    
    Let us nevertheless make the observation that in the present setting, the exact constrained minimiser $u^*_{\rm eig}\in X$ satisfies the equation
    \begin{align*}
        -\Delta u^*_{\rm eig} = \lambda^* u^*_{\rm eig} - V u^*_{\rm eig} + (u^*_{\rm eig})^3.
    \end{align*}
    Consequently, we can appeal to Lemma \ref{lem:appendix} in the appendix to deduce from elliptic regularity theory that $u^*_{\rm eig} \in H^4(\Omega)$ (compared to $u^*_{\rm src} \in H^{3-\epsilon}(\Omega)$ in Theorem \ref{thm:2a}). This allows us to prove the uniform boundedness of the discrete constrained minimisers $\{u^*_{N, \rm eig}\}_{N \in \mathbb{N}}$ in $H^2(\Omega)$, which, in turn, explains the $\epsilon$-improvement in the results of Theorem \ref{thm:2b} compared to Theorem \ref{thm:2a}.

\end{proof}

\section{Application to Spectral Fourier Discretisations}\label{sec:6}

Throughout this section, we assume the periodic framework of \text{Setting Two}. In particular, $\Omega$ is assumed to be a $d$-dimensional torus that is isomorphic to the unit cell $\tilde{\Omega}$ of a periodic lattice $\mathbb{L}$. For simplicity, we assume that $\widetilde{\Omega}= (0, 1)^d$ although other choices do not affect the result. For this choice of $\widetilde{\Omega}$, the orthonormal Fourier basis $\mathcal{B}$ of $L^2(\Omega)$ is given by
	\begin{align*}
		\mathcal{B} := \left\{e_{\bold{G}}(\bold{x}):=\frac{1}{\vert \Omega \vert^{\frac{1}{2}} }e^{\imath \bold{G} \cdot \bold{x}}\colon \quad  \bold{G}\in 2\pi \mathbb{Z}^d \right\}.
	\end{align*}
Next, we define the sequence of approximation spaces $\{X_{N}\}_{N \in \mathbb{N}}$ that we consider in this section~as
\begin{align}\label{eq:fourier_spaces}
\forall N \in \mathbb{N} \colon \quad X_{N}:= \text{span} \Big\{e_{\bold{G}}\colon ~ \bold{G} \in 2\pi \mathbb{Z}^d  ~\text{ with }~ \vert \bold{G} \vert \leq 2\pi N\Big\}.
\end{align}
Thus, similar to Section \ref{sec:5}, $N^{-1}$ will play the role of the discretisation parameter $\delta$ from Sections \ref{sec:source} and \ref{sec:eig}.

As in Section \ref{sec:5} -- and for similar reasons -- we will impose a stronger regularity assumption on the effective potential $V$ and source function $f$, namely, that $V \in H^{r_v}(\Omega)$ for some $r_v \geq2$ and $f \in H^s(\Omega)$ for some $s \geq 1$. With this additional assumption, the application of Lemmas~\ref{lem:combine_1} and \ref{lem:combine_2} to this choice of Galerkin approximation spaces results in the following theorem.

\begin{theorem}\label{thm:3a}
    Assume that \textbf{Setting Two} holds and $V \in H^{r_v}(\Omega)$ and $f \in H^s(\Omega)$ for some $r_v \geq2$ and $s \geq 1$ respectively, let $u^*_{\rm src}\in X$ denote the unique minimiser of the continuous energy functional \eqref{eq:1.1}, let the spectral Fourier approximation spaces $\{X_{N}\}_{N \in \mathbb{N}}$ be defined according to Equation \eqref{eq:fourier_spaces}, let $u^*_{N, \rm src}\in X_N, ~ N \in \mathbb{N}$ denote the unique discrete minimiser of the discrete energy functional \eqref{eq:1.5}, and for every $N \in \mathbb{N}$ let $\Pi_N \colon X \rightarrow X_N$ denote the $(\cdot, \cdot)_X$-orthogonal projection operator onto $X_N$. Then we have the error estimate
    \begin{align}\label{eq:Fourier_src}
      \Vert \Pi_N u^*_{\rm src}-u^{*}_{N, \rm src}\Vert_{L^2(\Omega)} +N^{-1}\Vert \Pi_N u^*_{\rm src}-u^{*}_{N, \rm src}\Vert_{H^1(\Omega)}&\lsim N^{-3}\Vert \Pi_N u^*_{\rm src}-u^{*}_{\rm src}\Vert_{H^{1}(\Omega)}.
    \end{align}
\end{theorem}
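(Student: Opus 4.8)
The plan is to verify \textbf{Assumptions A.1}, \textbf{A.2} and \textbf{A.3} for the spectral Fourier approximation spaces $\{X_N\}_{N\in\mathbb{N}}$ of Equation~\eqref{eq:fourier_spaces} — identifying $\delta=N^{-1}$ — and then to apply Lemmas~\ref{lem:combine_1} and~\ref{lem:combine_2} with $q^*_{\rm src}=2$, which produces exactly Estimate~\eqref{eq:Fourier_src}. \textbf{Assumption A.1} is immediate: since $\Omega$ is a torus there are no corners, so a bootstrapping argument on $-\Delta u^*_{\rm src}+Vu^*_{\rm src}+(u^*_{\rm src})^3=f$, using that $H^2(\Omega)$ is a Banach algebra for $d\le 3$, $V\in H^{r_v}(\Omega)$ with $r_v\ge2$ and $f\in H^s(\Omega)$ with $s\ge 1$, gives $u^*_{\rm src}\in H^{3}(\Omega)$, and the standard spectral estimate then yields \eqref{eq:assum_1a} with $r_{\rm src}=3$. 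Moreover the $(\cdot,\cdot)_X$-orthogonal projection $\Pi^X_N$ coincides with Fourier truncation, which is simultaneously optimal in every Sobolev norm, so \eqref{eq:assum_1b} — and more generally $\Vert v-\Pi^X_N v\Vert_{H^t(\Omega)}\lsim N^{t-s}\Vert v\Vert_{H^s(\Omega)}$ for $0\le t\le s$ — holds trivially.

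For \textbf{Assumption A.2} I would follow Remark~\ref{rem:inverse}, which is applicable because $X_N\subset H^2(\Omega)$ (trigonometric polynomials are smooth). Condition~\eqref{eq:assum_3primeprime} holds with $s^*_{\rm src}=3$ and $\Pi^2_N$ again the truncation operator, while the Fourier inverse inequality is the clean estimate $\Vert v_N\Vert_{H^p(\Omega)}\lsim N^{p-1}\Vert v_N\Vert_{H^1(\Omega)}$, i.e. $\theta(p)=p-1$. Because $\theta(2)=1$ (rather than $2(2-1)=2$ as in the polynomial case) and because $u^*_{\rm src}\in H^3(\Omega)$ with no corner loss, running the estimate of Remark~\ref{rem:inverse} with $t=2$ in place of $p^*$ actually yields the \emph{stronger} statement $\Vert u^*_{\rm src}-u^*_{N,\rm src}\Vert_{H^2(\Omega)}\lsim N^{-1}\Vert u^*_{\rm src}\Vert_{H^3(\Omega)}$, so that $u^*_{N,\rm src}\to u^*_{\rm src}$ in $H^2(\Omega)$ and hence, by Sobolev embedding, also in $L^\infty(\Omega)$. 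This uniform $H^2(\Omega)$-boundedness of the discrete minimisers is the main non-routine ingredient of the proof, and it is precisely what will allow $q^*_{\rm src}=2$ (with no $\epsilon$-loss) in contrast to the spectral polynomial case of Theorem~\ref{thm:2a}.

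Equipped with this, the verification of \textbf{Assumption A.3} with $q^*_{\rm src}=2$ follows the scheme of the proof of Theorem~\ref{thm:2a}. Writing $\widetilde{V}_N:=V+(u^*_{\rm src})^2+(u^*_{N,\rm src})^2+u^*_{\rm src}u^*_{N,\rm src}-1$, the uniform $H^2(\Omega)$-bound on $\{u^*_{N,\rm src}\}$, together with $V\in H^{r_v}(\Omega)\hookrightarrow H^2(\Omega)$, $u^*_{\rm src}\in H^3(\Omega)$ and the product estimate~\eqref{eq:sobolev_product}, shows that $\{\widetilde{V}_N\}_{N\in\mathbb{N}}$ is uniformly bounded in $H^2(\Omega)$; consequently $\widetilde{V}_N\varphi\in H^2(\Omega)$ with $\Vert\widetilde{V}_N\varphi\Vert_{H^2(\Omega)}\lsim\Vert\varphi\Vert_{H^2(\Omega)}$ for any $\varphi\in H^2(\Omega)\cap X$. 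Introducing for each $N$ the adjoint problem $(v,u)_X=\int_\Omega\widetilde{V}_N\varphi\, v\,d{\bf x}$ for all $v\in X$ — i.e. the weak form of $-\Delta u+u=\widetilde{V}_N\varphi$ on the torus — periodic elliptic regularity (clean, since there is no boundary) gives $u\in H^4(\Omega)$ with $\Vert u\Vert_{H^4(\Omega)}\lsim\Vert\widetilde{V}_N\varphi\Vert_{H^2(\Omega)}\lsim\Vert\varphi\Vert_{H^2(\Omega)}$. Using the $(\cdot,\cdot)_X$-Galerkin orthogonality of Fourier truncation and then its optimal approximation rate,
\begin{align*}
\int_\Omega\widetilde{V}_N({\bf x})\varphi({\bf x})[\chi-\Pi^X_N\chi]({\bf x})\,d{\bf x}=(u-\Pi^X_N u,\chi-\Pi^X_N\chi)_X\lsim N^{-3}\Vert u\Vert_{H^4(\Omega)}\Vert\chi-\Pi^X_N\chi\Vert_X\lsim N^{-3}\Vert\varphi\Vert_{H^2(\Omega)}\Vert\chi-\Pi^X_N\chi\Vert_X,
\end{align*}
which is \eqref{eq:assum_4b} with $q^*_{\rm src}=2$. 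Lemmas~\ref{lem:combine_1} and~\ref{lem:combine_2} then deliver Estimate~\eqref{eq:Fourier_src}; everything apart from the $H^2(\Omega)$-boundedness step in the second paragraph parallels the arguments already carried out in Sections~\ref{sec:4} and~\ref{sec:5}.
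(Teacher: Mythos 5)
Your proof is correct and follows the same high-level template as the paper's: verify \textbf{Assumptions A.1}, \textbf{A.2}, \textbf{A.3} with $q^*_{\rm src}=2$ and then invoke Lemma~\ref{lem:combine_2}. The verifications of \textbf{A.1} and \textbf{A.2} match the paper's essentially verbatim (including the use of the Fourier inverse inequality $\theta(p)=p-1$ and the resulting uniform $H^2(\Omega)$-bound on the discrete minimisers).

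The one place you diverge from the paper is the verification of \textbf{Assumption A.3}. You re-run the adjoint-problem/Aubin--Nitsche template from the spectral polynomial case (Theorem~\ref{thm:2a}): introduce $u\in X$ solving $(v,u)_X=\int_\Omega\widetilde V_N\varphi\,v$ for all $v\in X$, invoke periodic elliptic regularity to get $u\in H^4(\Omega)$ with $\Vert u\Vert_{H^4}\lsim\Vert\widetilde V_N\varphi\Vert_{H^2}$, and then use $(\cdot,\cdot)_X$-Galerkin orthogonality plus the $N^{-3}$ approximation rate of $\Pi_N$ in $H^4\to H^1$. The paper instead takes a shortcut that is specific to the Fourier setting: because $\Pi_N$ is simultaneously the $L^2(\Omega)$-orthogonal projector, one can write $\int_\Omega\widetilde V_N\varphi[\chi-\Pi_N\chi]=\int_\Omega(I-\Pi_N)(\widetilde V_N\varphi)[\chi-\Pi_N\chi]$, bound this by $\|(I-\Pi_N)(\widetilde V_N\varphi)\|_{L^2}\|\chi-\Pi_N\chi\|_{L^2}\lsim N^{-3}\|\widetilde V_N\varphi\|_{H^2}\|\chi-\Pi_N\chi\|_X$, and conclude directly without any adjoint problem or elliptic regularity at all. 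Both routes yield $q^*_{\rm src}=2$; yours buys uniformity with the arguments of Sections~\ref{sec:4}--\ref{sec:5}, while the paper's buys a shorter proof by exploiting a structural feature ($L^2$-orthogonality of Fourier truncation) that is unavailable in the FEM and Legendre cases.
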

\begin{proof}
    We follow the template of Theorem \ref{thm:2a} and demonstrate the validity of \textbf{Assumptions A.1, A.2,} and \textbf{A.3} following which, we apply Lemmas \ref{lem:combine_1} and \ref{lem:combine_2}. Note that since the underlying function space $X=H^1(\Omega)$ is defined on a torus and we use spectral Fourier approximation spaces, the $(\cdot, \cdot)_X$-orthogonal projection operator $\Pi_N$ is also $H^s$-orthogonal for any $s\geq 0$ (which explains why we have dropped the superscript $X$ in our notation for this operator). This fact will be referenced repeatedly in the sequel and will considerably simplify the proof.

The verification of \textbf{Assumption A.1} is straight-forward. Indeed, making use of the fact that the governing equation is now posed on a torus, we can easily show that $u^*_{\rm src} \in H^{3}(\Omega)$. It is therefore a trivial exercise to show that the sought-after estimate \eqref{eq:assum_1a} holds for $r_{\rm src}= 3$. It is similarly clear that the orthogonal projection operator $\Pi_N\colon X \rightarrow X_N, ~ N \in \mathbb{N}$ satisfies the estimate \eqref{eq:assum_1b}. 

To verify \textbf{Assumption A.2}, we can use the same strategy used in the proof of Theorem \ref{thm:2a} and demonstrate the stronger result that
\begin{align}\label{eq:revision_hassan_new_verify_fourier}
    \exists C>0, ~\forall N\in \mathbb{N} \colon \qquad \Vert u^*_{N, \rm src} \Vert_{H^{2}(\Omega)} \leq C.
\end{align}
In order to do so, we simply replace the operator $ \widetilde{\Pi}^{2, 1, 0}_N$ used in the spectral polynomial case (Theorem \ref{thm:2a}) with the standard $(\cdot, \cdot)_X$ projection operator, and we make use of the following inverse inequality satisfied by the spectral polynomial approximation spaces $\{X_N\}_{N \in \mathbb{N}}$:
\begin{align*}
 \forall v_N \in X_N \colon \qquad   \Vert v_N\Vert_{H^{p}(\Omega)} \lsim N^{p-1}\Vert v_N\Vert_{H^1(\Omega)}.
\end{align*}
Since the arguments are otherwise identical to those used in the proof of Theorem \ref{thm:2a}, we refrain from presenting the details.

Turning now to the verification of \textbf{Assumption A.3}, we introduce, for each $N \in \mathbb{N}$, the auxiliary potential function $\widetilde{V}_N \colon \Omega \rightarrow \mathbb{R}$ given by (cf. Definition \ref{def:v_aux})
\begin{align}
    \widetilde{V}_N:= V + \left(u^*_{\rm src}\right)^2 + \left(u^N_{\rm src}\right)^2 + u^*_{\rm src}u^N_{\rm src} -1. 
\end{align}

Thanks to the $L^2(\Omega)$-orthogonality of the $(\cdot, \cdot)_X$-orthogonal projection operators $\Pi_N$, we deduce that for all $N$ sufficiently large it holds that

\begin{align}\nonumber
\int_\Omega \widetilde{V}_N({\bf x})  \varphi({\bf x}) [\chi- \Pi_N \chi]({\bf x})\; d{\bf{x}}&=\int_\Omega (I - \Pi_N) (\widetilde{V}_N  \varphi)({\bf x}) [\chi-\Pi_N \chi]({\bf x})\; d{\bf{x}}\\ \nonumber
&\lsim 
\|(I - \Pi_N) (\widetilde{V}_N   \varphi)\|_{L^2(\Omega)} \| \chi-\Pi_N \chi\|_{L^{2}(\Omega)}\\[0.25em] \label{er1c}
&\lsim N^{-1}\|(I - \Pi_N) (\widetilde{V}_N \varphi)\|_{L^2(\Omega)} \| u^*_{\rm src}-\Pi_N u^*_{\rm src}\|_{X}.
\end{align}
with the last step following from Estimate \eqref{eq:assum_1b} in \textbf{Assumption A.1}.

It remains to estimate the second term appearing on the right-hand side of Inequality \eqref{er1c} above. To do so, we first note that, by assumption, the effective potential $V \in H^{r_v}(\Omega)$ for some $r_v \geq 2$ so that Estimate \eqref{eq:revision_hassan_new_verify_fourier} implies that $\{\widetilde{V}_N\}_{N \in \mathbb{N}}$ is uniformly bounded in $H^{2}(\Omega)$. Thus, for all $N\in \mathbb{N}$ it holds that
\begin{align*}
     \|(I - \Pi_N) (\widetilde{V}_N  \varphi)\|_{L^2(\Omega)}  &\lsim  N^{-2}\| \widetilde{V}_N\|_{H^{2}(\Omega)} \|\varphi\|_{H^{2}(\Omega)},
\end{align*}
where we have used, as usual, the fact that $H^2(\Omega)$ is a Banach algebra. It therefore follows that \textbf{Assumption A.3} also holds with constant $q^*_{\rm src}= 2$, and we can thus apply Lemma \ref{lem:combine_2} with $q^*_{\rm src}=2$ to conclude.
\end{proof}

In a similar fashion, the application of Lemma \ref{lem:combine_eig_2} to the present choice of spectral Fourier Galerkin approximation spaces results in the following theorem.
\begin{theorem}\label{thm:3b}
    Assume that \textbf{Setting Two} holds and $V \in H^{r_v}(\Omega)$ for some $r_v \geq2$, let $u^*_{\rm eig}\in X$ denote the unique positive minimiser of the continuous energy functional \eqref{eq:1.2}, let the spectral Fourier approximation spaces $\{X_{N}\}_{N \in \mathbb{N}}$ be defined according to Equation \eqref{eq:fourier_spaces}, let $u^*_{N, \rm eig}\in X_N, ~ N \in \mathbb{N}$ denote a sequence of discrete minimisers of the discrete energy functional \eqref{eq:1.5} that satisfy $\big(u^*_{N, \rm eig}, u^*_{\rm eig}\big)_{L^2(\Omega)}\geq 0$, and for every $N \in \mathbb{N}$ let $\Pi_N \colon X \rightarrow X_N$ denote the $(\cdot, \cdot)_X$-orthogonal projection operator onto $X_N$. Then we have the error estimate
    \begin{align}\label{eq:Fourier_eig}
      \Vert \Pi_N u^*_{\rm eig}-u^{*}_{N, \rm eig}\Vert_{L^2(\Omega)} +N^{-1}\Vert \Pi_N u^*_{\rm eig}-u^{*}_{N, \rm eig}\Vert_{H^1(\Omega)}&\lsim N^{-3}\Vert \Pi_N u^*_{\rm eig}-u^{*}_{\rm eig}\Vert_{H^{1}(\Omega)}.
    \end{align}
\end{theorem}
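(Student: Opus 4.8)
The plan is to follow the by-now familiar template: verify \textbf{Assumptions B.1, B.2}, and \textbf{B.3} for the spectral Fourier approximation spaces $\{X_N\}_{N\in\mathbb{N}}$ and then invoke Lemma~\ref{lem:combine_eig_2} with $\delta = N^{-1}$ and $q^*_{\rm eig}=2$. The verification of \textbf{Assumption B.1} is routine: since $\Omega$ is a torus, elliptic regularity and a bootstrap argument (using $V\in H^{r_v}(\Omega)$, $r_v\geq 2$, and $u^*_{\rm eig}\in L^\infty$) give $u^*_{\rm eig}\in H^{4}(\Omega)$ with no corner obstruction, so the classical convergence rates for spectral Fourier Galerkin approximations of the nonlinear eigenvalue problem (as in \cite{cances2010numerical}) yield \eqref{eq:assum_eig_1a} with $r_{\rm eig}=3$ (the cubic nonlinearity limits us to $H^3$ for the rate), while the $H^s$-orthogonality of $\Pi_N$ immediately gives \eqref{eq:assum_eig_1b}. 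For \textbf{Assumption B.2} I would reuse the strategy of the source-problem proof (Theorem~\ref{thm:3a}): establish the stronger bound $\Vert u^*_{N,\rm eig}\Vert_{H^2(\Omega)}\leq C$ uniformly in $N$ by splitting $u^*_{N,\rm eig}-u^*_{\rm eig}$ via the $(\cdot,\cdot)_X$-projection and using the inverse inequality $\Vert v_N\Vert_{H^p(\Omega)}\lsim N^{p-1}\Vert v_N\Vert_{H^1(\Omega)}$ together with the convergence rates from \textbf{B.1}; the Sobolev embedding $H^2\hookrightarrow L^\infty$ then gives the claimed $L^\infty$-boundedness.

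The substantive part is \textbf{Assumption B.3}, which has two pieces. For \eqref{eq:assum_4}, I would set $\widetilde W\in H^{r_v}(\Omega)$ (in the application $\widetilde W = V^{(2)}_{\rm eig} = V-\lambda^*+3(u^*_{\rm eig})^2$, which lies in $H^2(\Omega)$ since $u^*_{\rm eig}\in H^4(\Omega)\hookrightarrow L^\infty$ and $H^2$ is a Banach algebra) and exploit the $L^2$-orthogonality of $\Pi_N$ exactly as in \eqref{er1c}: write $\int_\Omega \widetilde W \varphi\,[\chi-\Pi_N\chi] = \int_\Omega (I-\Pi_N)(\widetilde W\varphi)\,[\chi-\Pi_N\chi]$, apply Cauchy--Schwarz, and then bound $\Vert (I-\Pi_N)(\widetilde W\varphi)\Vert_{L^2}\lsim N^{-2}\Vert \widetilde W\varphi\Vert_{H^2}\lsim N^{-2}\Vert\widetilde W\Vert_{H^2}\Vert\varphi\Vert_{H^2}$ (Banach algebra) while $\Vert\chi-\Pi_N\chi\Vert_{L^2}\lsim N^{-1}\Vert\chi-\Pi_N\chi\Vert_X$; this yields \eqref{eq:assum_4} with $q^*_{\rm eig}=2$. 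For \eqref{eq:assum_4new} I would mimic the finite-element argument of Theorem~\ref{thm:1b}: introduce the adjoint problem $\langle(\mathcal{E}_{\rm eig}''[u^*_{\rm eig}]-\lambda^*)v,\vartheta\rangle = (u^*_{\rm eig},v)_{L^2}$ for all $v\in X$, note that coercivity/continuity of the shifted Hessian give a unique $\vartheta$, and that elliptic regularity (plus $u^*_{\rm eig}\in H^4$, $V\in H^2$) gives $\vartheta\in H^4(\Omega)$ with $\Vert\vartheta\Vert_{H^{2+s}}\lsim\Vert u^*_{\rm eig}\Vert_{H^s}$ for $s\in[0,2]$. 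Testing with $v=u^*_{N,\rm eig}-u^*_{\rm eig}$, splitting $\vartheta = \Pi_N\vartheta + (\vartheta-\Pi_N\vartheta)$, and handling the two resulting terms — the first via continuity of the shifted Hessian and the $N^{-2}$ rate of $\Vert\vartheta-\Pi_N\vartheta\Vert_X\lsim N^{-2}\Vert\vartheta\Vert_{H^3}$ against \textbf{B.1}, the second via the identity \eqref{eq:discret_eigen} relating $\langle(\mathcal{E}_{\rm eig}''[u^*_{\rm eig}]-\lambda^*)(u^*_{N,\rm eig}-u^*_{\rm eig}),\Pi_N\vartheta\rangle$ to $(\lambda^*_N-\lambda^*)$ and a quadratic-in-error term, then invoking the eigenvalue bound \eqref{eq:eigenvalue_convergence} together with the $H^{-t}$ duality estimate for $\lambda^*_N-\lambda^*$ — gives $(u^*_{\rm eig}, u^*_{\rm eig}-u^*_{N,\rm eig})_{L^2}\lsim N^{-3}\Vert u^*_{N,\rm eig}-u^*_{\rm eig}\Vert_X$, i.e. \eqref{eq:assum_4new} with $q^*_{\rm eig}=2$.

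Having verified all three assumptions with $q^*_{\rm eig}=2$, Lemma~\ref{lem:combine_eig_2} gives directly
\[
\Vert \Pi_N u^*_{\rm eig}-u^*_{N,\rm eig}\Vert_{L^2(\Omega)} + N^{-1}\Vert \Pi_N u^*_{\rm eig}-u^*_{N,\rm eig}\Vert_{H^1(\Omega)} \lsim N^{-3}\Vert \Pi_N u^*_{\rm eig}-u^*_{\rm eig}\Vert_{H^1(\Omega)},
\]
which is the claimed estimate \eqref{eq:Fourier_eig}. I expect the main obstacle to be the careful treatment of \eqref{eq:assum_4new}, specifically controlling the eigenvalue error $\lambda^*_N-\lambda^*$ at the right order: one needs the quadratic bound \eqref{eq:eigenvalue_convergence} to be effectively $O(N^{-4})+O(N^{-2})$, and then to extract an extra factor from the $H^{-1}$ (or $H^{-2}$) duality pairing against the smooth functions $u^*_{\rm eig}$ and $\Pi_N\vartheta$ — this is where the periodic (torus) setting helps, since there are no corner singularities to cap the regularity and the duality argument of \cite[Theorem 3]{cances2010numerical} goes through cleanly with the full $H^3$ (indeed $H^4$) regularity of $u^*_{\rm eig}$, which is why the rate $q^*_{\rm eig}=2$ (hence $N^{-3}$ in $L^2$) is attained here rather than the reduced rates of the finite-element case.
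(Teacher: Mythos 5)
Your overall template is correct and the final invocation of Lemma~\ref{lem:combine_eig_2} with $q^*_{\rm eig}=2$ is the right endpoint; your verifications of \textbf{B.1}, \textbf{B.2}, and \eqref{eq:assum_4} match the paper in substance. Where you diverge substantially is in the verification of \eqref{eq:assum_4new}. You propose importing the adjoint-problem machinery from the finite-element proof of Theorem~\ref{thm:1b}: introduce $\vartheta$ solving $\langle(\mathcal{E}_{\rm eig}''[u^*_{\rm eig}]-\lambda^*)v,\vartheta\rangle=(u^*_{\rm eig},v)_{L^2}$, test against $u^*_{N,\rm eig}-u^*_{\rm eig}$, split via $\Pi_N\vartheta$, and control the resulting term involving $\lambda^*_N-\lambda^*$ through the duality bound of \cite[Theorem 3]{cances2010numerical}. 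This route should indeed go through in the torus setting (the $t<3/2$ cap and $n-1$ cap are boundary/degree artefacts absent here), but it is considerably heavier than what the paper actually does. The paper's proof of \eqref{eq:assum_4new} in the Fourier case is a short purely algebraic computation: because $\Pi_N$ is simultaneously $X$- and $L^2$-orthogonal, and because $u^*_{\rm eig}$ and $u^*_{N,\rm eig}$ are both normalised in $L^2$, the left-hand side $(u^*_{\rm eig}- u^*_{N,\rm eig},u^*_{\rm eig})_{L^2}$ collapses via the polarisation identity to a combination of $\Vert u^*_{N,\rm eig}-u^*_{\rm eig}\Vert^2_{L^2}$ and $\Vert\Pi_N u^*_{\rm eig}-u^*_{\rm eig}\Vert^2_{L^2}$, which are then bounded at rate $N^{-3}\Vert u^*_{\rm eig}-u^*_{N,\rm eig}\Vert_X$ directly from the known $O(N^{-2})$ convergence in $L^2$ postulated in \textbf{B.1}. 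No adjoint problem and no eigenvalue-error duality estimate are needed at this step. So your proposal is more work but not wrong; the paper's argument is more elementary and specific to the Fourier structure. One cosmetic inaccuracy: your aside that ``the cubic nonlinearity limits us to $H^3$'' is not the limiting mechanism (the regularity is $H^{r_v+2}\supseteq H^4$), though it is harmless since $r_{\rm eig}\geq 2$ is all that \textbf{B.1} requires.
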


\begin{proof}
   Analogously to the proof of Theorem \ref{thm:3a} above, the proof of the present theorem is based on a verification of \textbf{Assumptions B.1, B.2,} and \textbf{B.3} which allow us to apply Lemma \ref{lem:combine_eig_2}. With the exception of Estimate \ref{eq:assum_4new} in \textbf{Assumption B.3}, these verifications are virtually identical to the ones carried out for \textbf{Assumptions A.1, A.2,} and \textbf{A.3} in the proof of Theorem \ref{thm:2a} so we desist from repeating the arguments. The verification of Estimate \ref{eq:assum_4new} in \textbf{Assumption B.3} in the present spectral Fourier case is also straightforward. Indeed, since the $(\cdot, \cdot)_X$-orthogonal projector $\Pi_N$ onto the Fourier approximation space $X_N$ is also $(\cdot, \cdot)_{L^2(\Omega)}$, we have the following easily verifiable identity
   \begin{align}\nonumber
       (u^*_{\rm eig}- u^*_{N, \rm eig}, u^*_{\rm eig})_{L^2(\Omega)} &= (\Pi_N u^*_{\rm eig}- u^*_{N, \rm eig}, u^*_{\rm eig})_{L^2(\Omega)}+(u^*_{\rm eig}- \Pi_N u^*_{\rm eig}, u^*_{\rm eig})_{L^2(\Omega)} \\ \nonumber
       &= (\Pi_N u^*_{\rm eig}- u^*_{N, \rm eig}, u^*_{\rm eig})_{L^2(\Omega)}+(u^*_{\rm eig}- \Pi_N u^*_{\rm eig}, u^*_{\rm eig} - \Pi_N u^*_{\rm eig})_{L^2(\Omega)} \\ \nonumber
       &= \frac{1}{2} \Vert u^*_{N, \rm eig}- u^*_{\rm eig}\Vert^2_{L^2(\Omega)}- \frac{1}{2} \Vert \Pi_N u^*_{\rm eig}- u^*_{\rm eig}\Vert^2_{L^2(\Omega)}  +\Vert u^*_{\rm eig}- \Pi_N u^*_{\rm eig}\Vert^2 _{L^2(\Omega)}\\ 
       &= \frac{1}{2} \Vert u^*_{N, \rm eig}- u^*_{\rm eig}\Vert^2_{L^2(\Omega)} +\frac{1}{2} \Vert \Pi_N u^*_{\rm eig}- u^*_{\rm eig}\Vert^2_{L^2(\Omega)}.
       \label{eq:final_spectral}
   \end{align}
    Equation \eqref{eq:final_spectral} can thus be estimated using the higher-order convergence rates available for the $(\cdot, \cdot)_X$-orthogonal projector $\Pi_N$ and the spectral Fourier Galerkin approximation. We refer to \cite[Section 3]{cances2010numerical} for the details.
\end{proof}

\section*{Appendix}

The goal of this appendix is to prove the following simple result.

\begin{lemma}\label{lem:appendix}
    Let $\Omega = (-1, 1)^d$ with $d \in \{1, 2, 3\}$, let $s \in [1, 2]$, let $g \in H^{s}(\Omega) \cap H_0^1(\Omega) \cap \mathcal{C}^0(\overline{\Omega})$, and let $\Theta := (-1, 3)^d$. Then, there exists a periodic extension $g_{\rm per} \in H^s_{\rm per}(\Theta)$ of $g$.  Additionally, the unique solution $u \in H_0^1(\Omega)$ to the homogeneous Poisson equation
    \begin{align}\label{eq:appendix_dir}
        -\Delta u = g \qquad \hspace{3mm}\text{in } H_0^1(\Omega),  
    \end{align}
    coincides on $\Omega \subset \Theta$ with the unique solution $u \in H^{1}_{\rm per}(\Theta)$ of the periodic Poisson equation
     \begin{align}\label{eq:appendix_per}
        -\Delta u_{\rm per} = g_{\rm per} \qquad \text{in } H^1_{\rm per}(\Theta).
    \end{align}
\end{lemma}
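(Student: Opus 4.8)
The plan is to build $g_{\rm per}$ by \emph{odd reflection} of $g$ across the faces of $\Omega$, then to show that the corresponding periodic Poisson solution inherits this odd symmetry, which forces it to vanish on $\partial\Omega$ and hence to restrict to the Dirichlet solution $u$. For each coordinate index $i$ let $\sigma_i$ be the reflection $x_i\mapsto 2-x_i$ (other coordinates fixed); it descends to an isometry of the torus $\Theta=(\mathbb{R}/4\mathbb{Z})^d$ whose fixed-point set is the pair of hyperplanes $\{x_i=1\}\cup\{x_i=-1\}$, i.e.\ the two faces of $\Omega$ orthogonal to $e_i$. Since $\Omega$ is a fundamental domain for the group generated by the translations $x\mapsto x+4e_i$ and the maps $\sigma_i$, there is exactly one function $g_{\rm per}$ on $\Theta$ agreeing with $g$ on $\Omega$ with $g_{\rm per}\circ\sigma_i=-g_{\rm per}$ for every $i$; it is obtained by reflecting $g$ successively across $\{x_i=1\}$, $i=1,\dots,d$. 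The definition is consistent precisely because $g\in H_0^1(\Omega)\cap\mathcal{C}^0(\overline{\Omega})$ makes $g$ vanish on $\partial\Omega$. Each such odd reflection across a face on which the one-sided trace vanishes preserves $H^s$ for $s\in[1,2]$ — the only compatibility condition triggered is the order-zero one (because $s<5/2$), and it holds since the trace of $g$ on the reflected face is $0$ — so $g_{\rm per}\in H^s_{\rm per}(\Theta)$, and $g_{\rm per}$ is moreover continuous on the torus. Finally the odd symmetry gives $\int_\Theta g_{\rm per}=0$, so \eqref{eq:appendix_per} is solvable, with solution unique up to an additive constant; exactly one choice of that constant, namely $\int_\Theta u_{\rm per}=0$, is compatible with the odd symmetry.

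For the conclusion, let $u_{\rm per}$ be the mean-zero solution of \eqref{eq:appendix_per}. Since $-\Delta$ commutes with each isometry $\sigma_i$ and with multiplication by $-1$, the function $-u_{\rm per}\circ\sigma_i$ solves $-\Delta(-u_{\rm per}\circ\sigma_i)=-g_{\rm per}\circ\sigma_i=g_{\rm per}$ and has zero mean, hence equals $u_{\rm per}$ by uniqueness. Thus $u_{\rm per}$ is odd across each face $\{x_i=\pm1\}$, so its trace on $\partial\Omega$ vanishes. By elliptic regularity on the torus — which has no boundary, hence the estimate is global — we have $u_{\rm per}\in H^{s+2}_{\rm per}(\Theta)\hookrightarrow\mathcal{C}^0$, so $u_{\rm per}|_\Omega\in H^{s+2}(\Omega)\cap H_0^1(\Omega)$. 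Testing \eqref{eq:appendix_per} against the zero-extension to $\Theta$ of an arbitrary $v\in H_0^1(\Omega)$ (which lies in $H^1_{\rm per}(\Theta)$ since $v$ vanishes on $\partial\Omega$) gives $\int_\Omega\nabla u_{\rm per}\cdot\nabla v=\int_\Omega g\,v$ for all such $v$, i.e.\ $u_{\rm per}|_\Omega$ solves the Dirichlet problem \eqref{eq:appendix_dir} weakly; uniqueness for \eqref{eq:appendix_dir} via Lax–Milgram then yields $u=u_{\rm per}|_\Omega$.

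The only genuinely delicate step is the $H^s$-stability of the odd reflection over the full range $s\in[1,2]$, including fractional values and the endpoint $s=2$: one must verify that no compatibility condition beyond vanishing of the trace is needed, which is exactly the role of the hypotheses $g\in H_0^1(\Omega)$ and $s\le 2$ (for larger $s$ one would also have to impose normal-derivative-type conditions). Everything else is soft: the symmetrization is a routine consequence of the commutation of $-\Delta$ with the reflections together with uniqueness, and the transfer back to the cube is immediate. The point of the lemma is precisely that $\Theta$ has no corners, so the standard periodic/interior elliptic regularity applies to $u_{\rm per}$ and descends to $u$ on $\Omega$, sidestepping the corner singularities one would otherwise encounter on the cube.
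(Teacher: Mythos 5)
Your proposal is correct and takes essentially the same route as the paper: both build $g_{\rm per}$ by odd (antisymmetric) reflection across the faces of $\Omega$, argue that this reflection preserves $H^s$ thanks to the vanishing trace of $g$ on $\partial\Omega$, and conclude trace-zero of $u_{\rm per}$ on $\partial\Omega$ from the inherited odd symmetry of the periodic solution. Your treatment is if anything slightly more careful than the paper's, in that you explicitly address the additive-constant non-uniqueness of the periodic Poisson problem (via the mean-zero normalization forced by the odd symmetry), a point the paper leaves implicit.
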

\begin{proof}
  We consider the case $s=2$ and $d\in \{2, 3\}$. The cases $d=1, s=2$ and  $d\in \{1, 2, 3\}, s=1$ are similar, and the remaining cases $d\in \{1, 2, 3\}, s \in (1, 2)$ follow by interpolation. 
  
  To begin with, recall that for $\Theta = (-1, 3)^d$, the Hilbert space $H^2_{\rm per}(\Theta)$ is defined as
  \begin{align*}
      H^2_{\rm per}(\Theta):= \Big\{u\in H^2_{\rm loc}(\mathbb{R}^d)\colon\quad  u(\bold{x} +\bold{G})=u(\bold{x}) \quad \text{for a.e. } \bold{x}\in \mathbb{R}^d, ~\forall \bold{G}\in 4\mathbb{Z}^d\Big\},
  \end{align*}
equipped with the usual $H^2(\Theta)$ inner product.

Next, observe that we can extend the domain of definition of~$g$ from $(-1, 1)^d$ to $\Sigma:=(-1, 3) \times (-1, 1)^{d-1}$ by antisymmetry, i.e., by setting 
\begin{align}\label{eq:appendix_1}
h(x, \bold{y})= \begin{cases}
          -g(2-x, \bold{y}) \qquad &\text{if } (x,\bold{y}) \in (1, 3) \times (-1, 1)^{d-1}\\
          g(x, \bold{y}) \qquad &\text{if } (x, \bold{y}) \in (-1, 1)\times (-1, 1)^{d-1}
     \end{cases}
 \end{align}

We claim that the extended function $h \in H^2(\Sigma)$. To see this, let us denote $\widetilde{\Omega}= \Sigma\setminus \overline{\Omega}= (1, 3)\times (-1, 1)^{d-1}$ and introduce the restriction $\widetilde{g}= h\vert_{\widetilde{\Omega}}$. Since the original function $g\in H^2(\Omega)$ by assumption, we must have that $\widetilde{g}\in H^2(\widetilde{\Omega})$. The construction of $h$ also implies that the trace and normal trace of $\widetilde{g}$ coincide with those of $g$ on the mutual boundary $\partial \widetilde{\Omega}\cap \partial \Omega= \{1\}\times (-1, 1)^d$. It is therefore a consequence of Green's identity that $h\in H^2(\overline{\widetilde{\Omega} \cup \Omega})=H^2(\Sigma)$. Moreover, since $h \in \mathcal{C}^0(\overline{\Sigma})$, we also have by construction that  $h \in H_0^1(\Sigma)$.

Repeating the same process first in the other dimensions yields an extended function $h \in H_0^1(\Theta) \cap H^2(\Theta) \cap \mathcal{C}^0(\overline{\Theta})$. Consequently, we may apply the same basic construction iteratively to obtain an extension $g_{\rm per} \in H^2_{\rm loc}(\mathbb{R}^d)$, and since we extend our function each time by anti-symmetry (see Equation \eqref{eq:appendix_1}), we have that $g_{\rm per}$ is $4\mathbb{Z}$-periodic, i.e., $g_{\rm per} \in H^2_{\rm per}(\Theta) \subset H^2_{\rm loc}(\mathbb{R}^d)$.

It remains to verify that the solution $u \in H_0^1(\Omega)$ to the homogeneous Dirichlet problem \eqref{eq:appendix_dir} coincides on $\Omega$ with the solution $u_{\rm per}\in H^1_{\rm per}(\Theta)$ to the periodic problem \eqref{eq:appendix_per}. Since $u \in H_0^1(\Omega)$ is the unique solution to Equation \eqref{eq:appendix_dir} and $-\Delta u_{\rm per}= g_{\rm per}=g$ on $\Omega$, it suffices to verify that $u_{\rm per}$ has trace zero on $\partial \Omega$. This, in fact, follows from the observation that the the periodic extension $g_{\rm per}$ is, by construction, symmetric-odd with respect to each corner of $\Omega= (-1, 1)^d$. Thus, $u_{\rm per}$ must also be symmetric-odd with respect to each corner of $\Omega$ and consequently of trace zero $\partial \Omega$ as required.
\end{proof}

Consider the setting of Lemma \ref{lem:appendix} above. It is important to note that since the periodic extension $g_{\rm per}\in H^2{\rm per}(\Theta)$ for $s \in [1, 2]$, elliptic regularity theory in the periodic setting implies that $u_{\rm per}\in H_{\rm per}^{2+s}(\Theta)$. And since $u_{\rm per}$ coincides on $\Omega$ with $u\in H^1_0(\Omega)$ that solves the homogeneous Dirichlet problem \eqref{eq:appendix_dir}, we must have $u \in H^{2+s}(\Omega)$. This is despite the fact that Equation \eqref{eq:appendix_dir} is posed on a polyhedron where elliptic regularity theory yields, in general, that $u$ belongs at most to $H^{t}(\Omega)$ for any $t < 3$. This trick is used several times in the present work.

\section*{Acknowledgements}

This work has been funded by the European Research Council (ERC) under the European Union’s Horizon 2020 research and innovation program (Grant No. 810367), project EMC2 (Y.M.).

\bibliographystyle{abbrv.bst}
	\bibliography{refs.bib}

\end{document}